\newtheorem{Def}{Definition}[section]
\newtheorem{lem}[Def]{Lemma}
\newtheorem{theo}[Def]{Theorem}
\newtheorem{pro}[Def]{Proposition}
\newtheorem{ex}[Def]{Example}
\newtheorem{assum}{Assumption}
\newtheorem{cor}[Def]{Corollary}
\newtheorem{con}[Def]{Condition}
\definecolor{Green}{RGB}{0,128,0}
\newcommand{\LL}{\langle}
\newcommand{\RR}{\rangle}
\newcommand{\tr}{\text{tr}}
\newcommand{\mcal}{\mathcal}
\newcommand{\mbb}{\mathbb}
\newcommand{\mbf}{\mathbf}
\newcommand{\ud}{\mathrm d}
\newcommand{\PD}{\partial}
\numberwithin{equation}{section}
\allowdisplaybreaks \allowdisplaybreaks[4]%change line
\begin{document}

\title[Asymptotic error distribution for SPDE]{Asymptotic error distribution of  numerical methods for parabolic  SPDEs with multiplicative noise}

\author{Jialin Hong}
\address{SKLMS, ICMSEC, Academy of Mathematics and Systems Science, Chinese Academy of Sciences, Beijing 100190, China, and School of Mathematical Sciences, University of Chinese Academy of Sciences, Beijing 100049, China}
\email{hjl@lsec.cc.ac.cn}

\author{Diancong Jin}
\address{School of Mathematics and Statistics, Huazhong University of Science and Technology, Wuhan 430074, China;
	Hubei Key Laboratory of Engineering Modeling and Scientific Computing, Huazhong University of Science and Technology, Wuhan 430074, China}
\email{jindc@hust.edu.cn (Corresponding author)}

\author{Xu Wang}
\address{SKLMS, ICMSEC, Academy of Mathematics and Systems Science, Chinese Academy of Sciences, Beijing 100190, China, and School of Mathematical Sciences, University of Chinese Academy of Sciences, Beijing 100049, China}
\email{wangxu@lsec.cc.ac.cn}

\thanks{This work is supported by the National key R\&D Program of China (No. 2024YFA1015900), the National Natural Science Foundation of China (No. 12201228 and No. 12288201),  and the CAS Project for Young Scientists in Basic Research (YSBR-087).}

\keywords{stochastic partial differential equation, exponential Euler method,  asymptotic error distribution, multiplicative noise}

\begin{abstract}
This paper aims to investigate the asymptotic error distribution  of several numerical methods for stochastic partial differential equations (SPDEs) with multiplicative noise. Firstly, we give the limit distribution of the normalized error process of the exponential Euler method in  $\dot{H}^\eta$ for some $\eta>0$.  A key finding is that the asymptotic error in distribution of the exponential Euler method is governed by a linear SPDE driven by infinitely many independent 
$Q$-Wiener processes. This characteristic represents a significant difference from numerical methods for both stochastic ordinary differential equations  and SPDEs with additive noise. Secondly, as applications of the above result, we  derive the asymptotic error  distribution of a full discretization based on the temporal exponential Euler method and the spatial finite element method. As a concrete illustration, we provide the pointwise limit distribution of the normalized error process when the exponential Euler method is applied to a specific class of stochastic heat equations.
Finally, by studying the asymptotic error of the spatial semi-discrete spectral Galerkin method,  we demonstrate  that the actual strong convergence speed of spatial semi-discrete numerical methods may be highly problem-dependent, rather than universally predictable.
\end{abstract}

\maketitle

\textit{MSC 2020 subject classifications}:  
60B12, 60F17, 60H15, 60H35
%60B12(1980–now)Limit theorems for vector-valued random variables (infinite-dimensional case)
%60B10(1973–now)Convergence of probability measures
%60H35(2000–now)Computational methods for stochastic equations (aspects of stochastic analysis)
%60H15(1973–now)Stochastic partial differential equations (aspects of stochastic analysis) 
%60F05(1973–now)Central limit and other weak theorems
%60F17(1980–now)Functional limit theorems; invariance principles

\section{Introduction}
The asymptotic error distribution refers to the limit distribution of the
normalized error process of a numerical method applied to a  stochastic system, where the normalization of the error process is conducted based on the strong convergence order of the numerical method.  Consequently, the existence of a nontrivial (non-zero) limit distribution implies that the strong convergence order is exact. The asymptotic error distribution also provides valuable insights in the optimal choice of tuning parameters
for the multilevel Monte Carlo method \cite{CLTMonte15} and the error structure \cite{Bouleau}. For finite-dimensional stochastic systems, the asymptotic error distribution of numerical methods has been extensively studied since the pioneering work by Kurtz and Protter \cite{Protter1991}. Concerning this topic, we refer the readers to \cite{Protter1998AOP,Jin25,Protter2020SPA} for stochastic ordinary differential equations (SODEs) driven by standard Brownian motions, to \cite{HuAAP,Neuenkirch,HuBIT}  for SODEs driven by fractional Brownian motions, to \cite{Fukasawa2023,Nualart2023} for stochastic integral equations, and to \cite{Wufuke} for Mckean-Vlasov SODEs.

In contrast to the extensive studies on finite-dimensional stochastic systems, the investigation into
the asymptotic error distribution of numerical methods for stochastic partial differential equations
(SPDEs) remains relatively nascent.  The recent work \cite{HJWY24} addressed this gap by establishing the asymptotic error distribution of the accelerated exponential Euler method for parabolic SPDEs with additive noise. This study developed a uniform approximation theorem for convergence in distribution to tackle the convergence in distribution of stochastic integrals with respect to $Q$-Wiener processes.  

A common feature observed in all aforementioned literature---covering both SODEs and SPDEs with additive noise---is that the asymptotic error of the numerical method is typically governed by a linear SODE or SPDE driven by only a finite number of additional independent Brownian motions or 
$Q$-Wiener processes.
This raises a critical question: Does this characteristic remain valid for numerical methods applied to SPDEs with multiplicative noise?

In this paper, we are devoted to  answering this question  by studying the asymptotic error distribution of  several numerical methods applied to the following parabolic SPDEs with multiplicative noise:
\begin{align}\label{SPDE}
	\begin{cases}
	\ud X(t)=AX(t)\ud t+F(X(t))\ud t+G(X(t))\ud W(t),\quad t\in(0,T],\\
	X(0)=X_0\in H,
	\end{cases}
\end{align}
where $H$ is a separable Hilbert space and $W$ is a $U$-valued $Q$-Wiener process with $U$ being another Hilbert space. The assumptions on the unbounded linear operator $A$, and the coefficients $F$ and $G$ will be specified in Section \ref{Sec2.1}.  Under Assumption \ref{assum1}, \eqref{SPDE} admits a unique mild solution given by 
\begin{align}\label{mildsoution}
	X(t)=E(t)X_0+\int_0^tE(t-s)F(X(s))\ud s+\int_0^tE(t-s)G(X(s))\ud W(s),\quad t\in[0,T],
\end{align}
where $\{E(t)\}_{t\ge 0}$ is the $C_0$-semigroup generated by $A$.  In order to leverage the smoothing property of $\{E(t)\}_{t\ge 0}$, we apply the exponential  Euler method to discretize \eqref{SPDE} and obtain the temporally continuous numerical solution $\{X^m(t)\}_{t\in[0,T]}$ (see \eqref{Xmmild}). Further, we show in Lemma \ref{strongorder} that $X^m(t)$  converges to
$X(t)$ with order $\frac{1}{2}$ in $\mbf L^p(\Omega;{H})$.  To verify that this strong convergence order is exact, we study the limit distribution of the normalized error process $U^m(t):=m^\frac{1}{2}(X^m(t)-X(t))$ in $H$.

Based on the uniform approximation theorem
for convergence in distribution (see \cite[Theorem 3.2]{HJWY24}), we prove in Theorem \ref{maintheo} that $U^m(t)$ converges in distribution to $U(t)$. We would like to point out that, different from the convergence in $H$ obtained in \cite{HJWY24}, we indeed  show that $U^m(t)$ converges in distribution to some process $U(t)$  in  $\dot{H}^\eta$ for some  $\eta> 0$, by sufficiently leveraging the smoothing property of $\{E(t)\}_{t\ge 0}$. It turns out that the limit distribution $U$ solves a linear SPDE driven by additional infinitely many independent $Q$-Wiener processes $\widetilde W^l$ with $l\in\mbb N^+$. In this way, we identify the characteristic of the asymptotic error distribution of the temporal semi-discrete exponential Euler method for SPDEs with multiplicative noise.  

As applications of Theorem \ref{maintheo}, we give the asymptotic error distribution of 
the exponential Euler method applied to  SODEs  and that of a full discretization applied to \eqref{SPDE} based on the temporal exponential Euler method and the spatial finite element method; see Corollaries \ref{cor-SODE} and \ref{fulldiscre}.  In addition, we consider a stochastic heat equation as a concrete example of \eqref{SPDE}, and obtain the asymptotic error distribution of its exponential Euler method. Especially, when the diffusion term is affine with respect to\ the state variable and the space is of one dimension, we establish the limit distribution of $U^m$ at any given $(t,x)$ by means of the convergence in distribution of $U^m$ in $\dot{H}^\eta$ for any $\eta\in(0,1)$ (see Theorem \ref{SHEasymerror'}). This kind of result on the pointwise convergence in distribution has not been reported  anywhere else to the best of our knowledge. 

Finally we investigate the asymptotic error of the spatial semi-discrete spectral Galerkin method applied to \eqref{SPDE}. Interestingly, the limit distribution of the corresponding  error process, weighed by the strong convergence  order, is zero according to Theorem \ref{YNerrorasym}. 
We further demonstrate  by a heuristic example (Example \ref{Example}) that the exact strong convergence speed of spatial semi-discrete numerical methods for SPDEs is highly problem-dependent. 

Let us state the main contributions of this work as follows.  
\begin{itemize}
	\item  We  establish the asymptotic error distribution of numerical methods applied to SPDEs with multiplicative noise for the first time, and identify the characteristic of  the asymptotic error of the  exponential Euler method.  
	
	\item It is shown that the convergence in distribution of the normalized error  process  still holds in $\dot{H}^\eta$ with some $\eta>0$, generalizing the existing convergence result in $H$. On basis of it,  we provide  the pointwise limit distribution of the normalized error process $U^m$ of numerical methods applied to stochastic heat equations. 
	
	\item We reveal that the asymptotic error or the exact strong convergence speed of the spatial spectral Galerkin method for SPDEs is highly problem-dependent.  
\end{itemize}

The remainder of this paper is organized as follows. Section \ref{Sec2} introduces some necessary notations and the assumptions imposed on the equation \eqref{SPDE}, and gives the strong convergence  of the exponential Euler method. In Section \ref{Sec3}, we establish the asymptotic error distribution for the exponential Euler method. Section \ref{Sec4} presents two key applications of the main theoretical result, including a concrete example of the equation \eqref{SPDE}. The asymptotic error of the spatial spectral Galerkin method is analyzed in Section \ref{Sec5}. Finally, Section \ref{Sec6} provides the conclusions of this study and outlines future research directions.

\section{Preliminaries}\label{Sec2}
In this section, we give the assumptions on \eqref{SPDE} and present the strong convergence order of the exponential Euler method. We begin with some notations.

For Banach spaces  $(\mcal X,\|\cdot\|_{\mcal X})$ and $(\mcal Y,\|\cdot\|_{\mcal Y})$, denote by $\mcal L(\mcal X,\mcal Y)$ the space of bounded linear operators from $\mcal X$ to $\mcal Y$ endowed with the usual operator norm $\|\cdot\|_{\mcal L(\mcal X,\mcal Y)}$, and denote $\mcal L(\mcal X):=\mcal L(\mcal X,\mcal X)$ for short. Denote by $Id_{\mcal X}$ the identity operator on $\mcal X$.
Denote by $\mbf C(\mcal X;\mcal Y)$ the space of $\mcal Y$-valued continuous functions defined on $\mcal X$ endowed with the norm $\|f\|_{\mbf C(\mcal X;\mcal Y)}:=\sup_{x\in\mcal X}\|f(x)\|_{\mcal Y}$, and by $\mbf C_b(\mcal X;\mcal Y)$ the space of bounded functions in $\mbf C(\mcal X;\mcal Y)$. Denote by $|\cdot|$ the $2$-norm of a vector or matrix.

Let $\big(\Omega,\mcal F,\mbf P\big)$ be a completed probability space and $\mbf E$ denote the expectation operator with respect to the probability measure $\mbf P$. For $p\ge1$, let $\mbf L^p(\Omega;\mcal X)$ be the space of $p$-fold integrable functions $f:\Omega\to \mcal X$ endowed with the norm $\|f\|_{\mbf L^p(\Omega;\mcal X)}:=\big(\mbf E\|f\|^p_{\mcal X}\big)^{1/p}$.  

Throughout the paper, we use $K(a_1,a_2,\ldots,a_l)$ to represent some generic constant depending on  parameters $a_1,a_2,\ldots,a_l$, which may vary for each appearance, and use the notation `$\overset{d}{\Longrightarrow}$' to stand for the convergence in distribution for random variables.

Let $\{X_n\}_{n=1}^\infty$ be a sequence of random variables defined on $(\Omega,\mathcal{F},\mathbf{P})$ taking values in a Polish space $\mcal E$. Let $(\tilde{\Omega},\tilde{\mathcal{F}},\tilde{\mathbf{P}})$  be an extension of $(\Omega,\mathcal{F},\mathbf{P})$ and $X$ be an $\mcal E$-valued random variable on this extension.
Then $X_n$ is said to stably converge in law to $X$ in $\mcal E$, denoted by `$X_n\overset{stably}{\Longrightarrow}X$ in $\mcal E$', if 
$$\lim_{n\to\infty}\mathbf{E}[Z f(X_n)]=\tilde{\mathbf{E}}[Z f(X)]$$
for all $f\in\mbf C_b(\mcal E;\mbb R)$ and all bounded random variable $Z$, where $\tilde{\mathbf E}$ denotes the expectation with respect to $\tilde{\mathbf{P}}$. From the above definition, we know that $X_n\overset{stably}{\Longrightarrow}X$ implies $X_n\overset{d}{\Longrightarrow}X$.
We refer the readers to \cite{Jacod1997} for more details of stable convergence in law.  
\subsection{Setting}\label{Sec2.1}
Throughout  this paper, let $(H, \LL\cdot,\cdot\RR, \|\cdot\|)$ and $(U,\LL\cdot,\cdot\RR_U,\|\cdot\|_U)$ be two  separable Hilbert spaces. Let  $\mcal L_2(U,H)$ stand for the space of Hilbert--Schmidt operators $\Gamma:U\to H$ equipped with the Hilbert--Schmidt norm $\|\Gamma\|_{\mcal L_2(U,H)}:=\big(\sum_{i=1}^{\infty}\|\Gamma \varphi_i\|^2\big)^{1/2}$, where $\{\varphi_i\}_{i\in\mbb N^+}$ is any orthonormal basis of $U$. It is well-known that the following properties hold for Hilbert--Schmidt operators. 

(1) It holds that $\|\Gamma\|_{\mcal L(U,H)}\le \|\Gamma\|_{\mcal L_2(U,H)}$ for any $\Gamma\in\mcal L_2(U,H)$. 

(2) Let $G_1$ and $G_2$ be another two separable Hilbert spaces and $S_1 \in \mcal L(G_1,U) $, $S_2 \in \mcal L(H,G_2)$, and $\Gamma \in \mcal L_2(U,H)$. Then $S_2 \Gamma S_1 \in \mcal L_2(G_1,G_2) $ and $\| S_2\Gamma S_1 \|_{\mcal L_2(G_1,G_2)}\le \|S_1\|_{\mcal L(G_1,U)}\|\Gamma\|_{\mcal L_2(U,H)}\|S_2\|_{\mcal L(H,G_2)}.$

Without extra statement, we always suppose that $\{W(t)\}_{t\in[0,T]}$ is a $U$-valued $Q$-Wiener process on $\big(\Omega,\mcal F,\mbf P\big)$ with respect to a normal filtration $\{\mcal F_t\}_{t\in[0,T]}$, where $Q\in\mcal L(U)$ is a positive definite and symmetric operator of finite trace $\tr(Q):=\sum_{i=1}^\infty q_i<\infty$ with $\{q_i\}_{i\in\mbb N^+}$ being its eigenvalues. Then, $W$ has the following  expansion:
\begin{align}\label{W}
W(t)=\sum_{i=1}^\infty Q^{\frac{1}{2}}h_i\beta_i(t)=\sum_{i=1}^\infty \sqrt{q_i}h_i\beta_i(t),\quad t\in[0,T],
\end{align}
where $\{h_i\}_{i\in\mbb N^+}$ is an orthonormal basis of $U$ consisting of eigenvectors of $Q$ such that $Qh_i=q_ih_i$ for $i\in\mbb N^+$, and $\{\beta_i\}_{i\in\mbb N^+}$ is a family of independent real-valued standard Brownian motions defined on $\big(\Omega,\mcal F,\{\mcal F_t\}_{t\in[0,T]},\mbf P\big)$. In addition, we can define the fractional power of $Q$. For any $r\in\mbb R$, define $Q^r:\text{Dom}(Q^r)\to U$ by $Q^ru:=\sum_{i=1}^{\infty}q_i^ru_i h_i$, where
$$u\in\text{Dom}(Q^r):=\Big\{u=\sum_{i=1}^{\infty}u_ih_i:u_i\in\mbb R,~\sum_{i=1}^{\infty}q_i^{2r}u_i^2<\infty\Big\}.$$

Further, we introduce the Cameron--Martin space $U_0=Q^{\frac{1}{2}}(U)$, which is a separable Hilbert space if equipped with the inner product $\LL u_0,v_0\RR_{U_0}:=\LL Q^{-\frac{1}{2}}u_0,Q^{-\frac{1}{2}}v_0\RR_U$ for all $u_0,v_0\in U_0$. It holds that $\{Q^{\frac{1}{2}}h_i\}_{i\in\mbb N^+}$ is an orthonormal basis of $U_0$.

Let $(-A):\text{Dom}(A)\subseteq H\to H$ be a linear, densely defined, self-adjoint, and positive definite operator, which is with compact inverse. In this setting, $A$ is the infinitesimal generator of a $C_0$-semigroup of contractions $\{E(t)=e^{tA}\}_{t\ge0}$ on $H$. In addition, there exists an increasing sequence of positive numbers $\{\lambda_i\}_{i\in\mbb N^+}$ and an orthonormal basis $\{e_i\}_{i\in\mbb N^+}$ of $H$ such that $-Ae_i=\lambda_ie_i$ with $0<\lambda_1\le\lambda_2\le\cdots\le\lambda_n(\to\infty)$. For any $r\in\mbb R$, define the operator $(-A)^{\frac{r}{2}}$ by $(-A)^{\frac{r}{2}}x:=\sum_{i=1}^{\infty}\lambda_i^{\frac{r}{2}}x_ie_i$ for all 
$$x\in\text{Dom}((-A)^{\frac{r}{2}}):=\Big\{x=\sum_{i=1}^\infty x_ie_i: x_i\in\mbb R,~\|x\|_r^2:=\|(-A)^{\frac{r}{2}}x\|^2=\sum_{i=1}^\infty \lambda_i^rx_i^2<\infty \Big\}.$$
Denote $\dot{H}^r:=\text{Dom}((-A)^{\frac{r}{2}})$, which is a Hilbert space equipped with the inner product $\LL u,v\RR_{r}:=\LL (-A)^{\frac{r}{2}}u,(-A)^{\frac{r}{2}}v\RR$  for $u,v\in \dot{H}^r$. Especially, it holds  $H=\dot{H}^0$.  It is easy to see that for  $\alpha\le\beta$,  
\begin{align}\label{norm}
\|x\|_\alpha\le \lambda_{1}^{\frac{\alpha-\beta}{2}}\|x\|_\beta,\quad x\in \dot{H}^\beta. 
\end{align}
In addition, it can be directly shown that the following interpolation inequality hold. 
\begin{pro}\label{interpolation}
	Let $p,q\in\mbb R$ with $p<q$, and $\gamma\in(p,q)$. Then it holds 
	\begin{align*}
		\|x\|_\gamma\le \|x\|_p^{1-\theta_\gamma}\|x\|_q^{\theta_\gamma},\quad\forall~x\in\dot{H}^q,
	\end{align*}
	where  $\theta_\gamma=\frac{\gamma-p}{q-p}\in(0,1)$.
\end{pro}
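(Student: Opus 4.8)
The plan is to reduce the inequality to a coordinatewise factorization in the eigenbasis of $-A$ followed by a single application of H\"older's inequality. First I would expand $x=\sum_{i=1}^\infty x_ie_i$ in the orthonormal basis $\{e_i\}_{i\in\mbb N^+}$, so that by definition $\|x\|_r^2=\sum_{i=1}^\infty\lambda_i^rx_i^2$ for every $r\in\mbb R$. Before anything else I would record that for $x\in\dot H^q$ all three norms $\|x\|_p,\|x\|_\gamma,\|x\|_q$ are finite: indeed $p<\gamma<q$, so \eqref{norm} gives $\|x\|_p\le\lambda_1^{(p-q)/2}\|x\|_q<\infty$ and likewise $\|x\|_\gamma<\infty$, which makes the statement meaningful and lets me manipulate the series freely.

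The key algebraic observation is that the exponent $\theta_\gamma=\frac{\gamma-p}{q-p}$ is precisely the one for which $\gamma$ is the convex combination $\gamma=(1-\theta_\gamma)p+\theta_\gamma q$. Hence each spectral weight splits as $\lambda_i^\gamma=(\lambda_i^p)^{1-\theta_\gamma}(\lambda_i^q)^{\theta_\gamma}$, and multiplying by $x_i^2=(x_i^2)^{1-\theta_\gamma}(x_i^2)^{\theta_\gamma}$ I obtain the pointwise identity
\[
\lambda_i^\gamma x_i^2=\big(\lambda_i^p x_i^2\big)^{1-\theta_\gamma}\big(\lambda_i^q x_i^2\big)^{\theta_\gamma},\qquad i\in\mbb N^+ .
\]

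Finally I would sum over $i$ and apply H\"older's inequality for series to the nonnegative sequences $a_i:=\lambda_i^p x_i^2$ and $b_i:=\lambda_i^q x_i^2$, with the conjugate exponents $\frac{1}{1-\theta_\gamma}$ and $\frac{1}{\theta_\gamma}$; these are indeed H\"older-conjugate because $\theta_\gamma\in(0,1)$ forces $(1-\theta_\gamma)+\theta_\gamma=1$. This yields
\[
\|x\|_\gamma^2=\sum_{i=1}^\infty\lambda_i^\gamma x_i^2\le\Big(\sum_{i=1}^\infty\lambda_i^p x_i^2\Big)^{1-\theta_\gamma}\Big(\sum_{i=1}^\infty\lambda_i^q x_i^2\Big)^{\theta_\gamma}=\|x\|_p^{2(1-\theta_\gamma)}\|x\|_q^{2\theta_\gamma},
\]
and taking square roots gives the claimed bound.

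I do not anticipate a genuine obstacle here: the proposition is a textbook log-convexity statement for the scale of fractional norms $\{\|\cdot\|_r\}$, and the only points requiring care are the verification that the norms are finite (supplied by \eqref{norm}) and the bookkeeping that $\frac{1}{1-\theta_\gamma}$ and $\frac{1}{\theta_\gamma}$ are the correct H\"older-conjugate pair. The proof is therefore short and self-contained, relying on nothing beyond the spectral definition of $\dot H^r$ and H\"older's inequality.
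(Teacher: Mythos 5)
Your proposal is correct and follows essentially the same route as the paper: expand $x$ in the eigenbasis, write $\gamma=(1-\theta_\gamma)p+\theta_\gamma q$ to factor each term as $\lambda_i^\gamma x_i^2=\big(\lambda_i^p x_i^2\big)^{1-\theta_\gamma}\big(\lambda_i^q x_i^2\big)^{\theta_\gamma}$, and apply H\"older's inequality with exponents $\tfrac{1}{1-\theta_\gamma}$ and $\tfrac{1}{\theta_\gamma}$. The only difference is your preliminary remark (via \eqref{norm}) that the norms are finite, which the paper omits as implicit.
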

\begin{proof}
Noting that $\gamma=(1-\theta_\gamma)p+\theta_\gamma q$ and $
\frac1{1-\theta_\gamma},\frac1{\theta_\gamma}>1$, then the H\"older inequality yields for any $x=\sum_{i=1}^{\infty}x_ie_i\in\dot{H}^q$ that
\begin{align*}
	\|x\|_\gamma^2&=\sum_{i=1}^{\infty}\lambda_i^\gamma x_i^2=\sum_{i=1}^{\infty}\lambda_i^{(1-\theta_\gamma)p}x_i^{2(1-\theta_\gamma)}\lambda_i^{\theta_\gamma q}x_i^{2\theta_\gamma} \\ 
	&\le \Big(\sum_{i=1}^\infty \lambda_i^px_i^2\Big)^{1-\theta_\gamma}\Big(\sum_{i=1}^\infty \lambda_i^qx_i^2\Big)^{\theta_\gamma}=\|x\|_p^{2(1-\theta_\gamma)}\|x\|_q^{2\theta_\gamma},
\end{align*}
which finishes the proof.
\end{proof}

Let us also recall some frequently used properties with respect to\ the semigroup $\{E(t)\}_{t\ge 0}$ (cf.\ \cite[Lemma B.9]{Krusebook}):
\begin{align}
&\|(-A)^rE(t)\|_{\mcal L(H)}\le K(r)t^{-r}, \quad t>0, ~r\ge 0,\label{semigroup1}\\
&\|(-A)^{-\rho}(E(t)-Id_H)\|_{\mcal L(H)}\le K(\rho) t^\rho,\quad t>0,~\rho\in[0,1] \label{semigroup2}, \\
&\int_s^t\|(-A)^{\frac{\rho}{2}}E(t-r)u\|^2\ud r\le K(\rho)(t-s)^{1-\rho}\|u\|^2,\quad u\in H,~0\le s<t,~\rho\in[0,1], \label{semigroup3}
\end{align}
where both the constants $K(r)$ and $K(\rho)$ are independent of $t$.

Next, we give the assumptions on the initial value $X_0$,  and the coefficients $F$ and $G$ in \eqref{SPDE}.

\begin{assum}\label{assum1}
The initial value $X_0$ satisfies $\|X_0\|_{\mbf L^p(\Omega;\dot{H}^{1+\sigma})}<\infty$ for some $\sigma\in(0,1)$ and $p\ge 4$. 
The mappings $F:H\to H$ and $G:H\to \mcal L_2^0:=\mcal L_2(U_0,H)$ are globally Lipschitz continuous, i.e., there exists $L_1>0$ such that
\begin{align}
	\|F(u_1)-F(u_2)\|&\le L_1\|u_1-u_2\|,\quad\forall~u_1,u_2\in H, \label{FLip}\\
		\|G(u_1)-G(u_2)\|_{\mcal L_2^0}&\le L_1\|u_1-u_2\|,\quad\forall~u_1,u_2\in H. \label{GLip}
\end{align}
Also, there exists $L_2>0$ such that
\begin{align}
	\|G(u)\|_{\mcal L_2(U_0,\dot{H}^\sigma)}=\|(-A)^{\frac{\sigma}{2}}G(u)\|_{\mcal L_2^0}\le L_2(1+\|u\|_{\sigma}),\quad\forall~u\in\dot{H}^\sigma. \label{Ggrow}
\end{align} 
\end{assum}

Under Assumption \ref{assum1}, the equation \eqref{SPDE} has a unique $p$-fold integrable mild solution $X$, which has the following  spatial and temporal regularity (cf.\ Theorems 2.27 and 2.31 of \cite{Krusebook}):
\begin{align}
	\sup_{t\in[0,T]}\|X(t)\|_{\mbf L^p(\Omega;\dot{H}^{1+\sigma})}&\le K(T)\left(1+\|X_0\|_{\mbf L^p(\Omega;\dot{H}^{1+\sigma})}\right), \label{Xspatial}\\
	\|X(t)-X(s)\|_{\mbf L^p(\Omega;\dot{H}^{\gamma})}&\le K(T,\gamma)|t-s|^{1/2},\quad t,s\in[0,T],~\gamma\in[0,\sigma]. \label{Xtemporal}
\end{align}

In order to derive the asymptotic error distribution for numerical methods, the following assumptions on $F$ and $G$ are further required.

\begin{assum}\label{assum2}
The mappings $F:\dot{H}^\alpha\to H$ and $G:\dot{H}^\alpha\to\mcal L_2^0$ are twice continuously Fr\'echet differentiable for some $\alpha\in[0,\sigma+\frac{1}{2})$ with $\sigma$ being given in Assumption \ref{assum1}. Moreover, there exists a constant $L_3>0$ such that
\begin{align}
	&\|\mcal DF(v)u\|\le L_3\|u\|,\quad\forall~ v\in\dot{H}^\alpha,u\in H, \label{F'}\\
	&\|\mcal D^2F(v)(u_1,u_2)\|\le L_3\|u_1\|_\alpha\|u_2\|_\alpha,\quad\forall~v,u_1,u_2\in\dot{H}^\alpha,\label{F''}\\
	&\|\mcal DG(v)u\|_{\mcal L_2^0}\le L_3\|u\|,\quad\forall~ v\in\dot{H}^\alpha,u\in H,\label{G'}\\
	&\|\mcal D^2G(v)(u_1,u_2)\|_{\mcal L_2^0}\le  L_3\|u_1\|_\alpha\|u_2\|_\alpha,\quad\forall~v,u_1,u_2\in\dot{H}^\alpha. \label{G''}
\end{align}{\tiny }
\end{assum}

In the following, we use the notation $\mcal D^2F(v)u^2:=\mcal D^2F(v)(u,u)$ and similar for $\mcal D^2G$ if  no confusion occurs.

\begin{assum}\label{assum3}
There exist $\beta_1\in(0,1),$ $\beta_2>0,$ and $L_4>0$ such that
\begin{align}
	&\|(-A)^{-\frac{\beta_1}{2}}\mcal DG(v)uQ^{-\frac{\beta_2}{2}}\|_{\mcal L_2^0}\le L_4\|u\|,\quad\forall~v\in\dot{H}^\alpha,~u\in H, \label{assum3.1}\\
	&\|G(v)Q^{-\frac{\beta_2}{2}}\|_{\mcal L_2^0}\le L_4(1+\|v\|),\quad\forall~v\in H.\label{assum3.2}
	\end{align}
\end{assum}

\subsection{Exponential Euler method}
Let $m\in\mbb N^+$, $\tau=\frac{T}{m}$, and $\{t_n=n\tau,~n=0,1,\ldots,m\}$ be the uniform partition of $[0,T]$.
Consider the following exponential Euler method 
\begin{align*}
	\bar {X}_n^m=E(t_n-t_{n-1})\big(\bar{X}^m_{n-1}+\tau F(\bar X^m_{n-1})+G(\bar X^m_{n-1})\Delta W_{n-1}\big),\quad n=1,\ldots,m,
\end{align*}
starting from $\bar X^m_0=X_0$, or equivalently,
\begin{align*}
	\bar{X}^m_n=E(t_n)X_0+\tau\sum_{k=0}^{n-1}E(t_n-t_k)F(\bar X^m_k)+\sum_{k=0}^{n-1}E(t_n-t_k)G(\bar X^m_k)\Delta W_k,\quad n=1,\ldots,m,
\end{align*}
where $\Delta W_k=W(t_{k+1})-W(t_k)$ with $k=0,\ldots,m-1$.
For $t\in[0,T]$, we consider the continuous version of $\{\bar X^m_n,n=0,\ldots,m\}$:
\begin{align}\label{Xmmild}
	X^m(t)=E(t)X_0+\int_0^tE(t-\kappa_m(s))F(X^m(\kappa_m(s)))\ud s+\int_0^t E(t-\kappa_m(s))G(X^m(\kappa_m(s)))\ud W(s),
\end{align}
where $\kappa_m(s)=\lfloor\frac{s}{\tau}\rfloor \tau=\lfloor\frac{ms}{T}\rfloor\frac{T}{m}$. Then it is easily checked that $X^m(t_k)=\bar{X}^m_k$ for $k=0,\ldots,m$.

The following lemma gives the spatial and temporal regularity of $X^m$, whose proof is similar to that of \eqref{Xspatial}--\eqref{Xtemporal} and is given in the appendix. 
\begin{lem}\label{Xmregularity}
Let $\sigma\in(0,1)$  be given such that Assumption \ref{assum1} is fulfilled. Then the following estimates hold.
\begin{itemize}
\item[(i)]	$\sup\limits_{t\in[0,T]}\|X^m(t)\|_{\mbf L^p(\Omega;\dot{H}^{1+\sigma})}\le K(T)(1+\|X_0\|_{\mbf L^p(\Omega;\dot{H}^{1+\sigma})}).$
\item [(ii)] For any $\gamma\in[0,1+\sigma)$, there exists a constant $K(T,\gamma)>0$ independent of $m$ such that 
\[
\|X^m(t)-X^m(s)\|_{\mbf L^p(\Omega;\dot{H}^{\gamma})}\le K(T,\gamma)|t-s|^{\frac{1}{2}\big(1-\max(\gamma-\sigma,0)\big)},\quad t,s\in[0,T].\]
\end{itemize}
\end{lem}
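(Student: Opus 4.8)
The plan is to work term by term from the mild representation \eqref{Xmmild}, estimating the initial semigroup term, the deterministic drift convolution, and the stochastic convolution separately in the relevant norm. Before attacking (i), I would first record the lower-order a priori moment bounds $\sup_t\|X^m(t)\|_{\mbf L^p(\Omega;H)}$ and $\sup_t\|X^m(t)\|_{\mbf L^p(\Omega;\dot H^\sigma)}$, both uniform in $m$. These follow the classical route: apply $(-A)^0$, resp.\ $(-A)^{\sigma/2}$, to \eqref{Xmmild}, use that $\{E(t)\}$ is a contraction so that no singular kernel appears, bound $F$ and $G$ by the linear growth coming from \eqref{FLip}, \eqref{GLip} and \eqref{Ggrow}, apply the Burkholder--Davis--Gundy and Minkowski inequalities to the stochastic convolution, and close by a (continuous/discrete) Gronwall argument, using $\kappa_m(s)\le s$ and the monotonicity of the kernels. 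With these in hand, for (i) the initial term is immediate, since $\|E(t)X_0\|_{1+\sigma}=\|E(t)(-A)^{(1+\sigma)/2}X_0\|\le\|X_0\|_{1+\sigma}$ by commutation and contraction; and the drift term is controlled by \eqref{semigroup1}: writing $t-\kappa_m(s)\ge t-s$ and using $(1+\sigma)/2<1$ gives a bound by $\int_0^t K(t-s)^{-(1+\sigma)/2}\big(1+\|X^m(\kappa_m(s))\|\big)\,\ud s$, which is finite and uniform in $m$ after taking $\mbf L^p(\Omega)$-norms and inserting the $H$-moment bound.

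The crux is the stochastic convolution at the top regularity. By the It\^o isometry (for $p=2$; BDG and Minkowski for general $p$) I must bound $\int_0^t\mbf E\|(-A)^{(1+\sigma)/2}E(t-\kappa_m(s))G(X^m(\kappa_m(s)))\|_{\mcal L_2^0}^2\,\ud s$. Factoring $(-A)^{(1+\sigma)/2}=(-A)^{1/2}(-A)^{\sigma/2}$ and applying \eqref{Ggrow} to the second factor reduces matters to the kernel $(-A)^{1/2}E(t-\kappa_m(s))$. Here the naive operator-norm bound $(t-\kappa_m(s))^{-1/2}$ is fatal, because its square integrates to a factor of order $\log m$ and destroys uniformity in $m$. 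The whole point is therefore to invoke the sharp \emph{integrated} smoothing estimate \eqref{semigroup3} with $\rho=1$, which for a \emph{fixed} operator is insensitive to the non-integrable kernel. Since the integrand $(-A)^{\sigma/2}G(X^m(\kappa_m(s)))$ varies with $s$, I would split it as its value at a reference time plus an increment, apply \eqref{semigroup3} to the fixed reference part, and absorb the increment against the now-integrable kernel via the temporal regularity of $s\mapsto G(X^m(\kappa_m(s)))$, exploiting the piecewise-constant structure of $\kappa_m$ to keep all constants independent of $m$. I expect this endpoint estimate --- reaching $\dot H^{1+\sigma}$ and matching the exact-solution regularity \eqref{Xspatial} uniformly in the step size, as in the regularity theorems cited for \eqref{Xspatial} --- to be the main obstacle; strictly below the endpoint, i.e.\ in $\dot H^{1+\sigma-\varepsilon}$, the crude kernel bound already suffices.

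For (ii), I would fix $0\le s<t$ and use the standard increment decomposition obtained from \eqref{Xmmild} by writing $E(t-\kappa_m(r))=E(t-s)E(s-\kappa_m(r))$ for $r\le s$. This splits $X^m(t)-X^m(s)$ into the initial increment $(E(t-s)-Id_H)E(s)X_0$, the two overhang integrals over $[s,t]$ (drift and stochastic), and the two semigroup-difference terms $(E(t-s)-Id_H)\int_0^sE(s-\kappa_m(r))(\cdots)$. The initial increment is handled by \eqref{semigroup2} with $\rho=(1+\sigma-\gamma)/2\in[0,1]$, giving the rate $|t-s|^{(1+\sigma-\gamma)/2}$ from the $\dot H^{1+\sigma}$ bound of part (i); the semigroup-difference terms are treated identically, combining \eqref{semigroup2} with the convolution regularity. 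The binding contribution is the stochastic overhang: by the It\^o isometry and \eqref{Ggrow}, $\int_s^t\mbf E\|(-A)^{\gamma/2}E(t-\kappa_m(r))G(X^m(\kappa_m(r)))\|_{\mcal L_2^0}^2\,\ud r$. For $\gamma\le\sigma$ the factor $(-A)^{(\gamma-\sigma)/2}$ is a bounded operator (by \eqref{norm}) and the integral is $O(|t-s|)$, i.e.\ rate $\tfrac12$; for $\gamma\in(\sigma,1+\sigma)$ the kernel $(t-r)^{-(\gamma-\sigma)}$ is integrable and yields $|t-s|^{1-(\gamma-\sigma)}$, i.e.\ rate $\tfrac12\big(1-(\gamma-\sigma)\big)$. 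The two cases combine into the stated exponent $\tfrac12\big(1-\max(\gamma-\sigma,0)\big)$, and since all remaining terms carry strictly larger exponents, they are absorbed using $|t-s|\le T$. Crucially, here $\gamma<1+\sigma$ is strict, so the crude kernel bound already suffices and no endpoint difficulty arises; the delicate refined-smoothing estimate is confined to part (i).
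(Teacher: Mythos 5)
Your proposal is correct, and for part (i) it is essentially the paper's own proof: the paper likewise first derives uniform moment bounds and a temporal H\"older estimate in $H$, and then resolves the endpoint stochastic convolution exactly as you describe --- writing $G(X^m(\kappa_m(s)))=G(X^m(t))+\big(G(X^m(\kappa_m(s)))-G(X^m(t))\big)$, applying \eqref{semigroup3} with $\rho=1$ to the fixed part (after factoring $E(t-\kappa_m(s))=E(t-s)E(s-\kappa_m(s))$ and using contractivity, which is the concrete form of your ``piecewise-constant structure'' remark), and absorbing the increment, of size $O\big((t-\kappa_m(s))^{(1+\sigma)/4}\big)$ by \eqref{GLip} and the H\"older estimate, against the squared kernel $(t-\kappa_m(s))^{-(1+\sigma)}$. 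One imprecision in your prerequisites: the $H$- and $\dot H^\sigma$-bounds alone do not suffice, since the increment's H\"older exponent must exceed $\sigma/2$, so the H\"older-in-$H$ estimate requires an a priori bound in $\dot H^{2\delta}$ with $2\delta>\sigma$; the paper proves \eqref{Appendix2} for all $\beta\in[0,1)$ and takes $\delta=\frac{1+\sigma}{4}$. This is a routine extension of your own argument, not a gap. Also, your claim that no singular kernel appears in the lower-order bounds is not quite right for the drift term (it produces the integrable kernel $(t-\kappa_m(s))^{-\sigma/2}$). Where you genuinely depart from the paper is part (ii) in the regime $\gamma\in(\sigma,1+\sigma)$: you estimate the increment directly, using the integrable kernel $(t-r)^{-(\gamma-\sigma)}$ for the stochastic overhang and \eqref{semigroup2} with $\rho=\frac{1+\sigma-\gamma}{2}$ for the semigroup-difference parts, whereas the paper only proves the case $\gamma\in[0,\sigma]$ directly (rate $\tfrac12$) and then obtains $\gamma>\sigma$ by interpolation, combining Proposition \ref{interpolation} with the H\"older inequality, between the $\dot H^\sigma$ increment estimate and the uniform $\dot H^{1+\sigma}$ bound of part (i). Both yield the stated exponent; the interpolation route is shorter because it recycles part (i) wholesale, while your direct estimate is self-contained and makes explicit where the degradation $\tfrac12(\gamma-\sigma)$ originates. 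A final cosmetic slip: the initial-increment term carries exponent $\frac{1+\sigma-\gamma}{2}$, which for $\gamma>\sigma$ equals (rather than strictly exceeds) the stochastic overhang's exponent, so it is a co-binding term; this does not affect the stated rate.
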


Then we have the following strong convergence of $X^m$.
\begin{theo}\label{strongorder}
	Let $\sigma\in(0,1)$  be given such that Assumption \ref{assum1} is fulfilled. Then for any $\beta\in[0,\sigma]$, there is a constant $K(T,\beta)>0$ independent of $m$ such that
	\begin{align*}
		\sup_{t\in[0,T]}\|X^m(t)-X(t)\|_{\mbf L^p(\Omega;\dot{H}^\beta)}\le K(T,\beta)m^{-\frac{1}{2}}. 
	\end{align*}  
\end{theo}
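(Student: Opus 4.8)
The plan is to subtract the mild representations \eqref{mildsoution} and \eqref{Xmmild} and to estimate the error $e^m(t):=X^m(t)-X(t)$ directly in $\mbf L^p(\Omega;\dot H^\beta)$, setting $\phi(t):=\|e^m(t)\|_{\mbf L^p(\Omega;\dot H^\beta)}$. I would split both the drift and the diffusion differences into three structurally distinct pieces; for the diffusion integrand the decomposition reads
\begin{align*}
&E(t-\kappa_m(s))G(X^m(\kappa_m(s)))-E(t-s)G(X(s))\\
&=[E(t-\kappa_m(s))-E(t-s)]G(X^m(\kappa_m(s)))\\
&\quad+E(t-s)\big[G(X^m(\kappa_m(s)))-G(X(\kappa_m(s)))\big]+E(t-s)\big[G(X(\kappa_m(s)))-G(X(s))\big],
\end{align*}
and the drift is decomposed in the same way with $F$ in place of $G$. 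The three pieces are, respectively, a \emph{semigroup-difference} term with frozen coefficient, a \emph{coefficient-error} term, and a \emph{time-regularity} term.

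The time-regularity pieces are controlled by the Lipschitz bounds \eqref{FLip}--\eqref{GLip}, the smoothing property \eqref{semigroup1}, and the temporal H\"older regularity \eqref{Xtemporal} of $X$; for the stochastic part I would invoke the Burkholder--Davis--Gundy inequality, reducing the bound to $\big(\int_0^t(t-s)^{-\beta}\|X(\kappa_m(s))-X(s)\|_{\mbf L^p(\Omega;H)}^2\,\ud s\big)^{1/2}$, which is $\le K m^{-1/2}$ because $\beta\le\sigma<1$ makes the kernel integrable. The semigroup-difference piece of the \emph{drift} is also routine: since $F$ maps into $H$, combining \eqref{semigroup1} and \eqref{semigroup2} gives $\|(-A)^{\beta/2}[E(t-\kappa_m(s))-E(t-s)]\|_{\mcal L(H)}\le K\tau^{1/2}(t-s)^{-(\beta+1)/2}$, and as this appears inside a Bochner (not stochastic) integral with $(\beta+1)/2<1$, it is integrable and of order $m^{-1/2}$.

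The genuine difficulty, and what forces the restriction $\beta\le\sigma$, is the semigroup-difference piece of the \emph{stochastic} integral at the endpoint $\beta=\sigma$. Here Burkholder--Davis--Gundy requires integrating the \emph{square} of the operator norm, and the naive bound used for the drift is no longer time-integrable. The resolution is to exploit the extra spatial smoothness of the noise: by \eqref{Ggrow} and Lemma \ref{Xmregularity}(i), $\|(-A)^{\sigma/2}G(X^m(\kappa_m(s)))\|_{\mcal L_2^0}$ is bounded in $\mbf L^p$ uniformly in $s,m$, so factoring $(-A)^{\sigma/2}$ onto $G$ reduces the integrand to the deterministic quantity $\int_0^t\|(-A)^{(\beta-\sigma)/2}[E(t-\kappa_m(s))-E(t-s)]\|_{\mcal L(H)}^2\,\ud s$. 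I would estimate this sharply, \emph{not} via the lossy inequality $\min(1,x)\le x^{1/2}$, but by expanding in the eigenbasis $\{e_i\}$, using $|e^{-\lambda_i(t-\kappa_m(s))}-e^{-\lambda_i(t-s)}|\le e^{-\lambda_i(t-s)}\min(1,\lambda_i\tau)$ and retaining the quadratic factor $\min(1,\lambda_i\tau)^2$; splitting the time integral at $t-s=\tau$ then yields the critical bound $\le K\tau$, hence order $m^{-1/2}$ even when $\beta=\sigma$ (for $\beta<\sigma$ the extra factor $\lambda_i^{\beta-\sigma}\le\lambda_1^{\beta-\sigma}$ only helps).

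Finally, the coefficient-error pieces, handled with \eqref{semigroup1}, \eqref{FLip}--\eqref{GLip}, Burkholder--Davis--Gundy and the embedding $\|\cdot\|_0\le\lambda_1^{-\beta/2}\|\cdot\|_\beta$ from \eqref{norm}, produce the self-referential contributions $\int_0^t(t-s)^{-\beta/2}\phi(\kappa_m(s))\,\ud s$ and $\big(\int_0^t(t-s)^{-\beta}\phi(\kappa_m(s))^2\,\ud s\big)^{1/2}$. Collecting all estimates and using a Cauchy--Schwarz step to absorb the linear (drift) contribution into the quadratic kernel, I would arrive at $\phi(t)^2\le K m^{-1}+K\int_0^t(t-s)^{-\beta}\sup_{r\le s}\phi(r)^2\,\ud s$ with $\beta<1$. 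Since $\kappa_m(s)\le s$, the singular (Henry--Gronwall) inequality then closes the argument and gives $\sup_{t\in[0,T]}\phi(t)\le K(T,\beta)m^{-1/2}$, as claimed. The main obstacle is thus the endpoint estimate of the third step; everything else is a careful but standard combination of semigroup smoothing, Burkholder--Davis--Gundy, and a singular Gronwall lemma.
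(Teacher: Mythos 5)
Your proposal is correct, and it follows the paper's overall skeleton (subtract the mild forms, split into semigroup-difference, coefficient-error, and time-regularity pieces, apply Burkholder--Davis--Gundy, close with a singular Gronwall inequality), but it resolves the critical term by a genuinely different device. Two differences are worth recording. First, a minor one: you place the temporal increment on the exact solution, $X(\kappa_m(s))-X(s)$, controlled by \eqref{Xtemporal}, and freeze the numerical solution at grid points, whereas the paper's decomposition $X^m(t)-X(t)=\sum_{i=1}^6S_i^m(t)$ places the increment on the numerical solution, $X^m(\kappa_m(s))-X^m(s)$, controlled by Lemma \ref{Xmregularity}(ii); since both regularity results are available, this choice is immaterial (your variant makes the Gronwall unknown appear at grid points, which you correctly handle with the monotone majorant $\sup_{r\le s}\phi(r)^2$). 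Second, the substantive one: for the stochastic integral against the semigroup difference at the endpoint $\beta=\sigma$, the paper (term $S_5^m$) re-splits the coefficient by freezing it at the \emph{terminal} time, $G(X^m(s))=\big[G(X^m(s))-G(X^m(t))\big]+G(X^m(t))$, compensates the non-integrable kernel $(t-s)^{-(\beta+1)}$ in the first piece by the $\frac12$-H\"older temporal regularity of $X^m$, and handles the frozen piece with the integrated smoothing estimate \eqref{semigroup3} (applied with $\rho=\beta+1-\sigma\in[0,1]$, which is exactly where $\beta\le\sigma$ enters) together with \eqref{Ggrow}. You instead factor $(-A)^{\sigma/2}$ onto the grid-frozen coefficient via \eqref{Ggrow} and Lemma \ref{Xmregularity}(i), pull it out of the square function by Minkowski's integral inequality (legitimate since $p\ge 4$), and prove the sharp deterministic bound $\int_0^t\|(-A)^{(\beta-\sigma)/2}[E(t-\kappa_m(s))-E(t-s)]\|_{\mcal L(H)}^2\,\ud s\le K\tau$ by an eigenbasis computation: keeping the quadratic factor $\min(1,\lambda_i\tau)^2$, splitting at $t-s=\tau$, and using $\sup_{x>0}x^2e^{-2xu}\le Cu^{-2}$ on the far range (the crude bound $\min(1,x)\le x^{1/2}$ would only give $O(\tau\ln m)$, so your insistence on the quadratic factor is exactly what saves the endpoint). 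Both routes need $\beta\le\sigma$; what yours buys is a self-contained spectral argument that dispenses with the endpoint-freezing trick and with the temporal regularity of $X^m$ in this term, while the paper's route avoids any explicit eigenvalue computation by reusing the prepackaged maximal-regularity-type estimate \eqref{semigroup3}, a device it also exploits in the proofs of Lemma \ref{Xmregularity}(i) and Lemma \ref{YNconverge}.
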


The proof of Theorem \ref{strongorder} is also postponed to the appendix for the brevity of the paper.
Combining \eqref{Xspatial}, Lemma \ref{Xmregularity}(i), and Theorem \ref{strongorder}, and applying Proposition \ref{interpolation}, we can also obtain the convergence rate of $X^m(t)-X(t)$ in $\mbf L^p(\Omega;\dot{H}^\gamma)$ for any $\gamma\in[0,1+\sigma)$, whose proof is similar to that of Lemma \ref{Xmregularity}(ii) for the case $\gamma\ge\sigma$ and thus is omitted. 
\begin{cor}\label{cor1}
	Let $\sigma\in(0,1)$  be given such that Assumption \ref{assum1} is fulfilled. Then for any $\gamma\in[0,1+\sigma)$, there is a constant $K(T,\gamma)>0$ independent of $m$ such that
	\begin{align*}
		\sup_{t\in[0,T]}\|X^m(t)-X(t)\|_{\mbf L^p(\Omega;\dot{H}^\gamma)}\le K(T,\gamma)m^{-\frac{1}{2}\big(1-\max(\gamma-\sigma,0)\big)}. 
	\end{align*}  
\end{cor}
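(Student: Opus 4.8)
The plan is to reduce everything to the already-established rate in $\dot{H}^\sigma$ (Theorem \ref{strongorder}) together with the uniform $\dot{H}^{1+\sigma}$-bounds on both $X$ and $X^m$, and then to interpolate. Write $D^m(t):=X^m(t)-X(t)$. First I would dispose of the trivial range $\gamma\in[0,\sigma]$: here $\max(\gamma-\sigma,0)=0$, so the claimed exponent is $-\tfrac12$, which is exactly the content of Theorem \ref{strongorder} (taking $\beta=\gamma$). It therefore remains to treat $\gamma\in(\sigma,1+\sigma)$, where the target rate is $m^{-\frac12(1-(\gamma-\sigma))}$.

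For this range I would apply the interpolation inequality of Proposition \ref{interpolation} with the endpoints $p=\sigma$ and $q=1+\sigma$. Since $\gamma\in(\sigma,1+\sigma)$, the interpolation parameter is $\theta_\gamma=\frac{\gamma-\sigma}{(1+\sigma)-\sigma}=\gamma-\sigma\in(0,1)$, and hence, pointwise in $\omega\in\Omega$,
\begin{align*}
\|D^m(t)\|_\gamma\le \|D^m(t)\|_\sigma^{1-(\gamma-\sigma)}\|D^m(t)\|_{1+\sigma}^{\gamma-\sigma}.
\end{align*}
Taking the $p$-th power, then the expectation, and applying the H\"older inequality in $\Omega$ with the conjugate exponents $\frac{1}{1-(\gamma-\sigma)}$ and $\frac{1}{\gamma-\sigma}$, I would obtain
\begin{align*}
\|D^m(t)\|_{\mbf L^p(\Omega;\dot{H}^\gamma)}\le \|D^m(t)\|_{\mbf L^p(\Omega;\dot{H}^\sigma)}^{1-(\gamma-\sigma)}\,\|D^m(t)\|_{\mbf L^p(\Omega;\dot{H}^{1+\sigma})}^{\gamma-\sigma}.
\end{align*}

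To finish, I would bound the two factors separately. The first factor is controlled by Theorem \ref{strongorder} with $\beta=\sigma$, giving $\|D^m(t)\|_{\mbf L^p(\Omega;\dot{H}^\sigma)}\le K(T,\sigma)m^{-1/2}$ uniformly in $t$. The second factor is bounded uniformly in $m$ and $t$ by the triangle inequality together with the spatial regularity estimate \eqref{Xspatial} and Lemma \ref{Xmregularity}(i), which yield $\|D^m(t)\|_{\mbf L^p(\Omega;\dot{H}^{1+\sigma})}\le K(T)(1+\|X_0\|_{\mbf L^p(\Omega;\dot{H}^{1+\sigma})})$. Substituting these bounds gives
\begin{align*}
\sup_{t\in[0,T]}\|D^m(t)\|_{\mbf L^p(\Omega;\dot{H}^\gamma)}\le K(T,\gamma)\,m^{-\frac12(1-(\gamma-\sigma))}=K(T,\gamma)\,m^{-\frac12(1-\max(\gamma-\sigma,0))},
\end{align*}
which is the claim after taking the supremum over $t\in[0,T]$.

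Since the argument is a clean interpolation, there is no deep obstacle; the only points requiring care are the correct identification of $\theta_\gamma=\gamma-\sigma$ for the chosen endpoints and the legitimate passage from the pointwise interpolation inequality to the $\mbf L^p(\Omega;\cdot)$-norms via H\"older in $\Omega$. The essential structural input---that the lower endpoint $\dot{H}^\sigma$ carries the full order $\tfrac12$ while the higher endpoint $\dot{H}^{1+\sigma}$ is only uniformly bounded in $m$---is exactly what produces the order deficit $\max(\gamma-\sigma,0)$ once $\gamma$ exceeds the regularity threshold $\sigma$.
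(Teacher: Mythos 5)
Your proof is correct and follows essentially the same route as the paper: the paper omits the details but indicates precisely this argument (combine \eqref{Xspatial}, Lemma \ref{Xmregularity}(i), and Theorem \ref{strongorder} via Proposition \ref{interpolation}, as in the proof of Lemma \ref{Xmregularity}(ii) for $\gamma\ge\sigma$). Your identification of $\theta_\gamma=\gamma-\sigma$ and the H\"older step in $\Omega$ with exponents $\frac{1}{1+\sigma-\gamma}$ and $\frac{1}{\gamma-\sigma}$ match the paper's template exactly.
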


Theorem \ref{strongorder} indicates that the exponential Euler method has strong convergence order $\frac{1}{2}$ when approximating the equation \eqref{SPDE}. In what follows, we show that this convergence order is optimal by studying the asymptotic error distribution of the exponential Euler method \eqref{Xmmild}. 
To this end, we introduce the  normalized error process 
\begin{align}\label{Um}
U^m(t):=m^{\frac{1}{2}}(X^m(t)-X(t)),\quad t\in[0,T]
\end{align}
and present its limit distribution in $\dot{H}^\eta$ for a relatively small index $\eta\ge 0$ utilizing the following uniform approximation theorem for convergence in distribution.
\begin{theo}(\cite[Theorem 3.2]{HJWY24})\label{uniform approximation}
	Let $(\mcal X,\rho)$ be a metric space with the metric $\rho(\cdot,\cdot)$ and $Z^m, Z^{m,n}, Z^{\infty,n}$, $Z^{\infty,\infty}$ with $m,n\in\mbb N^+$ be  $\mcal X$-valued random variables defined on $(\Omega,{\mcal F},{\mbf P})$. Assume that the following conditions hold:
	\begin{itemize}
		\item [(A1)] For any bounded Lipschitz continuous function  $f:\mcal X\to\mbb R$,
		$$\lim_{n\to\infty}\sup_{m\ge 1}\big|{\mbf E}f(Z^m)-{\mbf E}f(Z^{m,n})\big|=0.$$
		
		\item [(A2)] There exists $n_0\in\mbb N^+$ such that for any $n\ge n_0$, $Z^{m,n}\overset{d}{\Rightarrow}Z^{\infty,n}$ in $\mcal X$ as $m\to\infty$.
		
		\item [(A3)]  $Z^{\infty,n}\overset{d}{\Rightarrow}Z^{\infty,\infty}$ in $\mcal X$ as $n\to\infty$.
	\end{itemize}
	Then it holds $Z^{m}\overset{d}{\Rightarrow}Z^{\infty,\infty}$ in $\mcal X$ as $m\to\infty$.
\end{theo}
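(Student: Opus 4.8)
The plan is to deduce the conclusion directly from the definition of convergence in distribution via a three-parameter triangle-inequality argument, in which the uniformity in $m$ provided by (A1) is what allows the two limits $m\to\infty$ and $n\to\infty$ to be decoupled. Before doing so, I would reduce the claim to testing only against bounded Lipschitz continuous functions. By the Portmanteau theorem, weak convergence of $\mcal X$-valued random variables is equivalent to convergence of expectations against the class of bounded Lipschitz continuous test functions, and this class is convergence-determining on an arbitrary metric space: one approximates the indicator of a closed set $F$ from above by the bounded Lipschitz functions $x\mapsto\big(1-\rho(x,F)/\varepsilon\big)^+$, obtaining $\limsup_m\mathbf P(Z^m\in F)\le\mu(F^\varepsilon)$ for the limiting law $\mu$, and then lets $\varepsilon\downarrow 0$ using that $F$ is closed. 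Hence it suffices to prove $\mbf Ef(Z^m)\to\mbf Ef(Z^{\infty,\infty})$ as $m\to\infty$ for every bounded Lipschitz continuous $f:\mcal X\to\mbb R$.

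Fixing such an $f$, I would insert the intermediate variables and split, for any $m\ge1$ and $n\ge n_0$,
$$\big|\mbf Ef(Z^m)-\mbf Ef(Z^{\infty,\infty})\big|\le \big|\mbf Ef(Z^m)-\mbf Ef(Z^{m,n})\big|+\big|\mbf Ef(Z^{m,n})-\mbf Ef(Z^{\infty,n})\big|+\big|\mbf Ef(Z^{\infty,n})-\mbf Ef(Z^{\infty,\infty})\big|.$$
The order in which the three terms are controlled is the crux. Given $\varepsilon>0$, I would first use (A1) to choose $n_1$ with $\sup_{m\ge1}|\mbf Ef(Z^m)-\mbf Ef(Z^{m,n})|<\varepsilon/3$ for all $n\ge n_1$, and (A3), applied to the bounded continuous $f$, to choose $n_2\ge n_0$ with $|\mbf Ef(Z^{\infty,n})-\mbf Ef(Z^{\infty,\infty})|<\varepsilon/3$ for all $n\ge n_2$. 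Setting $n:=\max(n_0,n_1,n_2)$ makes the first and third terms simultaneously smaller than $\varepsilon/3$, the first one uniformly over \emph{all} $m$. With this $n$ now frozen, (A2) gives $Z^{m,n}\overset{d}{\Rightarrow}Z^{\infty,n}$ as $m\to\infty$, hence an $m_0$ with $|\mbf Ef(Z^{m,n})-\mbf Ef(Z^{\infty,n})|<\varepsilon/3$ for all $m\ge m_0$. Summing the three bounds yields $|\mbf Ef(Z^m)-\mbf Ef(Z^{\infty,\infty})|<\varepsilon$ for all $m\ge m_0$, and since $\varepsilon>0$ is arbitrary the required convergence follows.

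The step I expect to be the main obstacle, or at least the conceptual heart of the argument, is correctly orchestrating the order of limits: because $m$ and $n$ vary jointly, one cannot simply iterate $\lim_n\lim_m$, and it is exactly the uniformity in $m$ supplied by (A1) that lets one freeze a large $n$ (keeping the first term small at whatever large value of $m$ is eventually selected through (A2)) while (A3) absorbs the residual $n$-dependence of the limiting law. A secondary point worth stating carefully is the Portmanteau reduction itself: since (A1) is phrased only for Lipschitz test functions, the proof must invoke that bounded Lipschitz functions already determine weak convergence, rather than attempting to verify the conclusion against all bounded continuous test functions, for which no uniform approximation hypothesis is assumed.
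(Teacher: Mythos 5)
Your proposal is correct. Note that the paper itself supplies no proof of this statement---it is imported verbatim from \cite[Theorem 3.2]{HJWY24}---so there is no in-paper argument to compare against; but your proof is exactly the standard (and expected) one: the Portmanteau reduction showing bounded Lipschitz functions are convergence-determining on a metric space, followed by the three-term triangle inequality in which the uniformity in $m$ from (A1) is what permits freezing a large $n$ before the $m_0$ from (A2) (which depends on that $n$) is selected, with (A3) absorbing the remaining $n$-dependence. Every step, including the order of quantifiers that you correctly identify as the crux, is sound.
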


\section{Asymptotic error distribution for  exponential Euler method}\label{Sec3}
In this section, we present our main result on the asymptotic error distribution of the temporal semi-discretization based on the exponential Euler method \eqref{Xmmild}, i.e., the limit distribution of the normalized error process $U^m$ defined in \eqref{Um}. 

To derive the limit distribution of $U^m$ in an infinite-dimensional space, based on Theorem \ref{uniform approximation}, a feasible approach is to consider its finite-dimensional approximation and study the  iterative limit distribution of the finite-dimensional approximation. We divide the proof into the following several steps.

\subsection{Auxiliary process $\widetilde U^m$}
In this part, we make a proper decomposition on $U^m$ and define an auxiliary process $\widetilde U^m$ which shares the same limit distribution as $U^m$.

\begin{lem}\label{errordecomposition}
Let $\sigma\in(0,1)$ and $\alpha\in[0,\sigma+\frac12)$ be given such that Assumptions \ref{assum1}--\ref{assum2} are fulfilled. Then  $\sup\limits_{m\ge 1}\sup\limits_{t\in[0,T]}\|U^m(t)\|_{\mbf L^p(\Omega;\dot{H}^{\sigma})}\le K(T)$  and it holds
\begin{align*}
	U^m(t)=&\;\int_0^tE(t-s)\mcal DF(X(s))U^m(s)\ud s+\int_0^tE(t-s)\mcal DG(X(s))U^m(s)\ud W(s) \\
	&\;-m^{\frac{1}{2}}\int_0^tE(t-s)\mcal DG(X^m(\kappa_m(s)))O^m(s)\ud W(s)+R^m(t),
\end{align*}
where 
$
O^m(s):=\int_{\kappa_m(s)}^sE(s-\kappa_m(r))G(X^m(\kappa_m(r)))\ud W(r)
$
and the residual term $R^m$ satisfies that for any  $\eta\in[0,\sigma)$ and sufficiently small $\epsilon>0$,
\[\sup\limits_{t\in[0,T]}\|R^m(t)\|_{\mbf L^2(\Omega;\dot{H}^\eta)}\le K(\eta,\epsilon)m^{-\min\big(\frac{1}{2}-\max(\alpha-\sigma,0),\frac{\sigma-\eta}{2}-\epsilon\big)}.
\]
\end{lem}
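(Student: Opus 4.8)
The plan is to obtain the stated decomposition by linearizing the scheme around the exact solution and isolating, inside the one-step increment of $X^m$, the single stochastic term $O^m$ that survives the $m^{1/2}$ scaling. The uniform bound is immediate: taking $\beta=\sigma$ in Theorem \ref{strongorder} gives $\sup_{t}\|U^m(t)\|_{\mathbf{L}^p(\Omega;\dot{H}^\sigma)}=m^{1/2}\sup_t\|X^m(t)-X(t)\|_{\mathbf{L}^p(\Omega;\dot{H}^\sigma)}\le K(T)$ uniformly in $m$. First I would record the deterministic one-step identity for the scheme: subtracting $E(s-\kappa_m(s))X^m(\kappa_m(s))$ from the mild form \eqref{Xmmild} and using the semigroup law $E(s-\kappa_m(r))=E(s-\kappa_m(s))E(\kappa_m(s)-\kappa_m(r))$ on $[0,\kappa_m(s)]$, one gets
\begin{align*}
X^m(s)-X^m(\kappa_m(s))={}&\big(E(s-\kappa_m(s))-Id_H\big)X^m(\kappa_m(s))\\
&+(s-\kappa_m(s))E(s-\kappa_m(s))F(X^m(\kappa_m(s)))+O^m(s),
\end{align*}
where the last term coincides with $O^m(s)$ in the statement, since $\kappa_m(r)=\kappa_m(s)$ for $r\in[\kappa_m(s),s]$.

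Next I would subtract the mild forms \eqref{mildsoution} and \eqref{Xmmild}, multiply by $m^{1/2}$, and carry out three reductions. (1) Replace $E(t-\kappa_m(s))$ by $E(t-s)$ in both integrals, sending the semigroup differences to $R^m$. (2) Taylor-expand around $X(s)$ to first order, $G(X^m(\kappa_m(s)))-G(X(s))=\mathcal{D}G(X(s))(X^m(\kappa_m(s))-X(s))+\tfrac12\mathcal{D}^2G(\xi_s)(X^m(\kappa_m(s))-X(s))^2$ and likewise for $F$, sending the second-order terms to $R^m$. (3) Write $m^{1/2}(X^m(\kappa_m(s))-X(s))=U^m(s)-m^{1/2}(X^m(s)-X^m(\kappa_m(s)))$ and insert the one-step identity. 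The $U^m(s)$ contributions give the first two main integrals. Among the three pieces of $-m^{1/2}(X^m(s)-X^m(\kappa_m(s)))$, only the $O^m$ piece inside the diffusion integral is of order one; after replacing $\mathcal{D}G(X(s))$ by $\mathcal{D}G(X^m(\kappa_m(s)))$ it becomes the third main term, while everything else is absorbed into $R^m$.

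The bulk of the work is then the estimate of $R^m$ in $\mathbf{L}^2(\Omega;\dot{H}^\eta)$, which splits into several contributions controlled by \eqref{semigroup1}--\eqref{semigroup3}, the a priori bounds \eqref{Xspatial} and Lemma \ref{Xmregularity}, the strong rates of Theorem \ref{strongorder} and Corollary \ref{cor1}, and Assumption \ref{assum2}. The second-order Taylor remainders are handled by \eqref{F''}, \eqref{G''} together with $\|X^m(\kappa_m(s))-X(s)\|_{\mathbf{L}^4(\Omega;\dot{H}^\alpha)}\lesssim m^{-\frac12(1-\max(\alpha-\sigma,0))}$ (from Corollary \ref{cor1} and Lemma \ref{Xmregularity}(ii)); the $m^{1/2}$ prefactor produces exactly the rate $m^{-(1/2-\max(\alpha-\sigma,0))}$. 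The same rate governs the $\mathcal{D}G(X(s))$-versus-$\mathcal{D}G(X^m(\kappa_m(s)))$ swap in the $O^m$ term, via \eqref{G''} and $\|O^m(s)\|_{\mathbf{L}^4(\Omega;\dot{H}^\alpha)}\lesssim m^{-\frac12(1-\max(\alpha-\sigma,0))}$ (a consequence of \eqref{semigroup3}, \eqref{Ggrow} and the Burkholder inequality). The $\big(E(s-\kappa_m(s))-Id_H\big)X^m(\kappa_m(s))$ and $(s-\kappa_m(s))$-weighted pieces are of order $m^{-\sigma/2}$ and $m^{-1/2}$ respectively, hence dominated.

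The two delicate points remain, and I expect the diffusion semigroup difference to be the main obstacle. Using that all functions of $A$ commute, I would factor $(-A)^{\frac\eta2}\big(E(t-\kappa_m(s))-E(t-s)\big)G(X^m(\kappa_m(s)))$ as the product of $(-A)^{-\rho}\big(E(s-\kappa_m(s))-Id_H\big)$, $(-A)^{\rho+\frac{\eta-\sigma}2}E(t-s)$ and $(-A)^{\frac\sigma2}G(X^m(\kappa_m(s)))$, bounded by \eqref{semigroup2}, \eqref{semigroup1} (the middle exponent being nonnegative once $\rho>\tfrac12$) and \eqref{Ggrow}. After the Itô isometry and the $m^{1/2}$ scaling this yields a second moment of order $m^{1-2\rho}\int_0^t(t-s)^{-(2\rho+\eta-\sigma)}\ud s$; the integral converges only for $\rho<\frac{1+\sigma-\eta}2$ while the prefactor decays only for $\rho>\frac12$, so admissible $\rho$ exist precisely when $\eta<\sigma$, and letting $\rho\uparrow\frac{1+\sigma-\eta}2$ gives the sharp rate $m^{-((\sigma-\eta)/2-\epsilon)}$, the sole origin of both the restriction $\eta<\sigma$ and the exponent $(\sigma-\eta)/2-\epsilon$. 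The second delicate point is the drift term $m^{\frac12}\int_0^tE(t-s)\mathcal{D}F(X(s))O^m(s)\ud s$: a direct Cauchy--Schwarz bound only gives $O(1)$, so instead I would apply the stochastic Fubini theorem to rewrite it as $m^{\frac12}\int_0^t\Psi^m(r)G(X^m(\kappa_m(r)))\ud W(r)$ with kernel $\Psi^m(r)=\int_r^{(\kappa_m(r)+\tau)\wedge t}E(t-s)\mathcal{D}F(X(s))E(s-\kappa_m(r))\ud s$ supported on an interval of length at most $\tau$; the Itô isometry then gains a factor $\tau$ and gives $O(m^{-\frac12})$, confirming that this term is a genuine remainder rather than part of the limit.
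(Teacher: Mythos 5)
Your proposal is correct and follows essentially the same route as the paper's proof: subtract the two mild forms, use the one-step increment identity for $X^m(s)-X^m(\kappa_m(s))$, Taylor-linearize, and bound the remainders with the same key devices the paper uses --- the stochastic Fubini argument that turns the drift-$O^m$ term into an $O(m^{-1/2})$ stochastic integral, and the semigroup-difference interpolation whose exponent bookkeeping (your $\rho$, the paper's $\gamma=2\rho-1$) is the sole source of the constraint $\eta<\sigma$ and the rate $m^{-(\sigma-\eta)/2+\epsilon}$. The one organizational difference is that you Taylor-expand everything around $X(s)$ and must then swap $\mcal DG(X(s))$ for $\mcal DG(X^m(\kappa_m(s)))$ in front of $O^m(s)$, whereas the paper performs two expansions (of $G(X^m(s))-G(X(s))$ about $X(s)$ and of $G(X^m(s))-G(X^m(\kappa_m(s)))$ about $X^m(\kappa_m(s))$) so the correct coefficient appears directly; your swap term is bounded, as you indicate, by \eqref{G''} together with the $\mbf L^4(\Omega;\dot H^\alpha)$ bounds on $X(s)-X^m(\kappa_m(s))$ and $O^m(s)$ at rate $m^{-(\frac12-\max(\alpha-\sigma,0))}$, so nothing of substance changes.
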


\begin{proof}
It follows from Corollary \ref{cor1} that $\sup\limits_{m\ge 1}\sup\limits_{t\in[0,T]}\|U^m(t)\|_{\mbf L^p(\Omega;\dot{H}^{\sigma})}\le K(T)$.
We decompose $U^m(t)$ as
\begin{align}\label{sec3eq1}
	U^m(t)=&\;m^{\frac{1}{2}}\int_0^t\big(E(t-\kappa_m(s))F(X^m(\kappa_m(s)))-E(t-s)F(X(s))\big)\ud s \notag \\
	&\;+m^{\frac{1}{2}}\int_0^t\big(E(t-\kappa_m(s))G(X^m(\kappa_m(s)))-E(t-s)G(X(s))\big)\ud W(s)\notag\\
	=:&\;I^m(t)+II^m(t),\quad t\in[0,T].
\end{align}

Next  we tackle  $I^m$ and $II^m$, respectively.

{\it Step 1.} We first decompose $I^m$ as
\begin{align*}
	I^m(t)=&\;m^{\frac{1}{2}}\int_0^tE(t-s)\big(F(X^m(s))-F(X(s))\big)\ud s\\
	&+m^{\frac{1}{2}}\int_0^tE(t-s)\big(E(s-\kappa_m(s))-Id_H\big)F(X^m(\kappa_m(s)))\ud s\\
	&-m^{\frac{1}{2}}\int_0^tE(t-s)\big(F(X^m(s))-F(X^m(\kappa_m(s)))\big)\ud s\\
	=:&\;A_1^m(t)+A_2^m(t)+A_3^m(t).
\end{align*}

The term $A_1^m$ can be further expanded as
\begin{align*}
	A_1^m(t)=\int_0^tE(t-s)\mcal DF(X(s))U^m(s)\ud s+A_{1,1}^m(t)
\end{align*}
with
\begin{align*}
	A_{1,1}^m(t):=m^{\frac{1}{2}}\int_0^tE(t-s)\int_0^1(1-\lambda)\mcal D^2F\big(X(s)+\lambda(X^m(s)-X(s))\big)(X^m(s)-X(s))^2\ud\lambda\ud s.
\end{align*}
Note that for any $s\in[0,T]$, $X(s),X^m(s)\in\dot{H}^\alpha$ almost surely with $\alpha\in[0,\sigma+\frac12)$ due to \eqref{Xspatial} and Lemma \ref{Xmregularity}(i). 
Then the property \eqref{semigroup1}, the condition \eqref{F''}, and Corollary \ref{cor1} yield that
\begin{align*}
\|A_{1,1}^m(t)\|_{\mbf L^2(\Omega;\dot{H}^\eta)}\le Km^{\frac{1}{2}}\int_0^t(t-s)^{-\frac{\eta}{2}}\|X^m(s)-X(s)\|_{\mbf L^4(\Omega;\dot{H}^\alpha)}^2\ud s\le Km^{-\frac{1}{2}+\max(\alpha-\sigma,0)}.
\end{align*}

For the term $A_2^m$, the linear growth property of $F$, together with properties \eqref{semigroup1}--\eqref{semigroup2} and Lemma \ref{Xmregularity}(i), yields for any $\gamma\in(0,\frac{1-\eta}{2})$ that
\begin{align*}
	\|A_2^m(t)\|_{\mbf L^2(\Omega;\dot H^\eta)}\le&\; Km^{\frac{1}{2}}\int_0^t\|(-A)^{\frac{\eta+1}{2}+\gamma}E(t-s)\|_{\mcal L(H)}\|(-A)^{-\frac{1}{2}-\gamma}\big(E(s-\kappa_m(s))-Id_H\big)\|_{\mcal L(H)}\\
	&\qquad\qquad\times\left(1+\|X^m(\kappa_m(s))\|_{\mbf L^2(\Omega;H)}\right)\ud s\\
	\le&\; K(\gamma)m^{-\gamma}\int_0^t(t-s)^{-\frac{\eta+1}{2}-\gamma}\ud s\le K(\gamma)m^{-\gamma}.
\end{align*}
Choosing $\gamma=\frac{1-\eta}2-\epsilon$ for any sufficiently small $\epsilon>0$, we then get
\[
\|A_2^m(t)\|_{\mbf L^2(\Omega;\dot H^\eta)}\le K(\epsilon)m^{-\frac{1-\eta}2+\epsilon}.
\]

For the term $A_3^m$, by noting that
\begin{align}
	X^m(s)-X^m(\kappa_m(s))=&\;\big(E(s-\kappa_m(s))-Id_H\big)X^m(\kappa_m(s))+\int_{\kappa_m(s)}^sE(s-\kappa_m(r))F(X^m(\kappa_m(r)))\ud r \notag\\
	&\;+\int_{\kappa_m(s)}^sE(s-\kappa_m(r))G(X^m(\kappa_m(r)))\ud W(r),\label{sec3eq2}
\end{align}
it can be further split as
$A_3^m(t)=\sum_{i=1}^4A_{3,i}^m(t)$
with
\begin{align*}
	A_{3,1}^m(t)&:=-m^{\frac{1}{2}}\int_0^tE(t-s)\mcal DF(X^m(\kappa_m(s)))\big(E(s-\kappa_m(s))-Id_H\big)X^m(\kappa_m(s))\ud s,\\
	A_{3,2}^m(t)&:=-m^{\frac{1}{2}}\int_0^tE(t-s)\mcal DF(X^m(\kappa_m(s)))\int_{\kappa_m(s)}^sE(s-\kappa_m(r))F(X^m(\kappa_m(r)))\ud r\ud s,\\
	A_{3,3}^m(t)&:=-m^{\frac{1}{2}}\int_0^tE(t-s)\mcal DF(X^m(\kappa_m(s)))\int_{\kappa_m(s)}^sE(s-\kappa_m(r))G(X^m(\kappa_m(r)))\ud W(r)\ud s,\\
	A_{3,4}^m(t)&:=-m^{\frac{1}{2}}\int_0^tE(t-s)\int_0^1(1-\lambda)\\
	&\qquad\qquad\cdot\mcal D^2F\big(X^m(\kappa_m(s))+\lambda(X^m(s)-X^m(\kappa_m(s)))\big)\big(X^m(s)-X^m(\kappa_m(s))\big)^2\ud\lambda\ud s.
\end{align*}
By properties \eqref{semigroup1}--\eqref{semigroup2}, the condition \eqref{F'}, and Lemma \ref{Xmregularity}(i), we get
\begin{align*}
	\|A^m_{3,1}(t)\|_{\mbf L^2(\Omega;\dot{H}^\eta)}\le&\;  Km^{\frac{1}{2}}\int_0^t\|(-A)^{\frac{\eta}{2}}E(t-s)\|_{\mcal L(H)}\|(-A)^{-\frac{1+\sigma}{2}}\big(E(s-\kappa_m(s))-Id_H\big)\|_{\mcal L(H)}\\
	&\qquad\qquad\times\|X^m(\kappa_m(s))\|_{\mbf L^2(\Omega;\dot{H}^{1+\sigma})}\ud s \\
	\le&\;  Km^{-\frac{\sigma}{2}},
\end{align*}
and similarly
\begin{align*}
	\|A^m_{3,2}(t)\|_{\mbf L^2(\Omega;\dot{H}^\eta)}\le& Km^{-\frac{1}{2}}\int_0^t\|(-A)^{\frac{\eta}{2}}E(t-s)\|_{\mcal L(H)}(1+\|X^m(\kappa_m(s))\|_{\mbf L^2(\Omega;H)})\ud s\le Km^{-\frac{1}{2}}.
\end{align*}
Applying the stochastic Fubini theorem, we rewrite $A_{3,3}^m$ as
\begin{align*}
	A_{3,3}^m(t)=-m^{\frac{1}{2}}\int_0^t\int_r^{(\kappa_m(r)+\frac{T}{m})\wedge t}E(t-s)\mcal DF(X^m(\kappa_m(s)))E(s-\kappa_m(r))G(X^m(\kappa_m(r)))\ud s\ud W(r).
\end{align*}
Then combining the It\^o isometry, the property \eqref{semigroup1}, the condition \eqref{F'}, the linear growth property of $G$, and Lemma \ref{Xmregularity}(i), one has
\begin{align*}
	&\|A^m_{3,3}(t)\|^2_{\mbf L^2(\Omega;\dot{H}^\eta)}\\
	=&\;m\mbf E\int_0^t\Big\|\int_r^{(\kappa_m(r)+\frac{T}{m})\wedge t}(-A)^{\frac{\eta}{2}}E(t-s)\mcal DF(X^m(\kappa_m(s)))E(s-\kappa_m(r))G(X^m(\kappa_m(r)))\ud s\Big\|_{\mcal L_2^0}^2\ud r \\
	\le &\; K \int_0^t\int_r^{(\kappa_m(r)+\frac{T}{m})\wedge t}(t-s)^{-\eta}\ud s\ud r 
	= K\int_0^t\int_{\kappa_m(s)}^s(t-s)^{-\eta}\ud r\ud s\\
	\le &\; Km^{-1}\int_0^t(t-s)^{-\eta}\ud s \le Km^{-1}.
\end{align*}
Similar to the estimate of $A_{1,1}^m$, the property \eqref{semigroup1}, the condition \eqref{F''}, and Lemma \ref{Xmregularity}(ii) yield that
\begin{align*}
	\|A_{3,4}^m(t)\|_{\mbf L^2(\Omega;\dot{H}^\eta)}\le Km^{\frac{1}{2}}\int_0^t(t-s)^{-\frac{\eta}{2}}\|X^m(s)-X^m(\kappa_m(s))\|^2_{\mbf L^4(\Omega;\dot{H}^\alpha)}\ud s\le Km^{-\frac{1}{2}+\max(\alpha-\sigma,0)}.
\end{align*}
It follows from the previous estimates for $A_{3,i}^m(t)$ with $i=1,\ldots,4$ that
\begin{align*}
	\|A_3^m(t)\|_{\mbf L^2(\Omega;\dot{H}^\eta)}\le Km^{-\min\big(\frac{\sigma}{2},\frac{1}{2}-\max(\alpha-\sigma,0)\big)}.
\end{align*}

Then the previous estimates for $A_{i}^m$ with $i=1,2,3$ lead to
\begin{align}
	I^m(t)=\int_0^tE(t-s)\mcal DF(X(s))U^m(s)\ud s+R^m_1(t),\quad t\in[0,T],\label{sec3eq2'}
\end{align}
where $R_1^m(t):=A_{1,1}^m(t)+A_2^m(t)+A_3^m(t)$ satisfies for any $\epsilon\ll 1$ that 
\[
\sup\limits_{t\in[0,T]}\|R_1^m(t)\|_{\mbf L^2(\Omega;\dot{H}^\eta)}\le K(\epsilon)m^{-\min\big(\frac{\sigma}{2},\frac{1}{2}-\max(\alpha-\sigma,0),\frac{1-\eta}{2}-\epsilon\big)}.
\]

{\it Step 2.} The estimate for $II^m$ is similar to that of $I^m$ utilizing in addition the It\^o isometry. We next decompose $II^m$ as
\begin{align*}
	II^m(t)=&\;m^{\frac{1}{2}}\int_0^tE(t-s)\big(G(X^m(s))-G(X(s))\big)\ud W(s)\\
	&+m^{\frac{1}{2}}\int_0^tE(t-s)\big(E(s-\kappa_m(s))-Id_H\big)G(X^m(\kappa_m(s)))\ud W(s)\\
	&-m^{\frac{1}{2}}\int_0^tE(t-s)\big(G(X^m(s))-G(X^m(\kappa_m(s)))\big)\ud W(s)\\
	=:&\;A_4^m(t)+A_5^m(t)+A_6^m(t).
\end{align*}

The term $A_4^m$ can also be further expanded as
\begin{align*}
	A_4^m(t)=\int_0^tE(t-s)\mcal DG(X(s))U^m(s)\ud W(s)+A_{4,1}^m(t)
\end{align*}
with
\begin{align*}
	A_{4,1}^m(t):=m^{\frac{1}{2}}\int_0^tE(t-s)\int_0^1(1-\lambda)\mcal D^2G\big(X(s)+\lambda(X^m(s)-X(s))\big)\big(X^m(s)-X(s)\big)^2\ud\lambda\ud W(s).
\end{align*}
The It\^o isometry, together with the property \eqref{semigroup1}, the condition \eqref{G''}, and Corollary \ref{cor1}, yields that
\begin{align*}
&\;\mbf E\|A_{4,1}^m(t)\|^2_\eta\\
=&\;m\mbf E\int_0^t\big\|(-A)^{\frac{\eta}{2}}E(t-s)\int_0^1(1-\lambda)\mcal D^2G(X(s)+\lambda(X^m(s)-X(s)))(X^m(s)-X(s))^2\ud\lambda\big\|_{\mcal L_2^0}^2\ud s\\
\le &\;Km\int_0^t(t-s)^{-\eta}\mbf E\|X^m(s)-X(s)\|_\alpha^4\ud s 
\le  Km^{-1+2\max(\alpha-\sigma,0)}.
\end{align*}

For the term $A_5^m$, we deduce for any $\gamma\in(0,\sigma-\eta)$ that
\begin{align*}
\mbf E\|A_5^m(t)\|^2_\eta
=&\;m\mbf E\int_0^t\|(-A)^{\frac{\eta}{2}}E(t-s)\big(E(s-\kappa_m(s))-Id_H\big)G(X^m(\kappa_m(s)))\|_{\mcal L_2^0}^2\ud s\\
\le &\; m\mbf E\int_0^t\|(-A)^{\frac{1+\eta+\gamma-\sigma}{2}}E(t-s)\|^2_{\mcal L(H)}\|(-A)^{-\frac{1+\gamma}{2}}\big(E(s-\kappa_m(s))-Id_H\big)\|_{\mcal L(H)}^2\\
&\qquad\quad\times\|(-A)^{\frac{\sigma}{2}}G(X^m(\kappa_m(s)))\|_{\mcal L_2^0}^2\ud s\\
\le &\; K(\gamma)m^{-\gamma}\int_0^t(t-s)^{-(1+\eta+\gamma-\sigma)}(1+\mbf E\|X^m(\kappa_m(s))\|^2_\sigma)\ud s
\le K(\gamma)m^{-\gamma}
\end{align*}
based on properties \eqref{semigroup1}--\eqref{semigroup2}, the condition \eqref{Ggrow}, and Lemma \ref{Xmregularity}(i). Choosing $\gamma=\sigma-\eta-2\epsilon$ for any $\epsilon\ll1$ leads to
\[
\mbf E\|A_5^m(t)\|^2_\eta\le K(\epsilon)m^{-(\sigma-\eta-2\epsilon)}.
\]

For the term $A_6^m$, we split it as $A^m_6(t)=\sum_{i=1}^4A_{6,i}^m(t)$ based on \eqref{sec3eq2} with
\begin{align*}
	A_{6,1}^m(t)&:=-m^{\frac{1}{2}}\int_0^tE(t-s)\mcal DG(X^m(\kappa_m(s)))\big(E(s-\kappa_m(s))-Id_H\big)X^m(\kappa_m(s))\ud W(s),\\
	A_{6,2}^m(t)&:=-m^{\frac{1}{2}}\int_0^tE(t-s)\mcal DG(X^m(\kappa_m(s)))\int_{\kappa_m(s)}^sE(s-\kappa_m(r))F(X^m(\kappa_m(r)))\ud r\ud W(s),\\
	A_{6,3}^m(t)&:=-m^{\frac{1}{2}}\int_0^tE(t-s)\mcal DG(X^m(\kappa_m(s)))O^m(s)\ud W(s),\\
	A_{6,4}^m(t)&:=-m^{\frac{1}{2}}\int_0^tE(t-s)\int_0^1(1-\lambda)\\&\qquad\qquad
	\cdot\mcal D^2G\big(X^m(\kappa_m(s))+\lambda(X^m(s)-X^m(\kappa_m(s)))\big)\big(X^m(s)-X^m(\kappa_m(s))\big)^2\ud\lambda\ud W(s).
\end{align*}
It follows from the It\^o isometry, properties \eqref{semigroup1}--\eqref{semigroup2}, the condition \eqref{G'}, and Lemma \ref{Xmregularity}(i) that
\begin{align*}
	\mbf E\|A_{6,1}^m(t)\|_\eta^2=&\;m\mbf E\int_0^t\|(-A)^{\frac{\eta}{2}}E(t-s)\mcal DG(X^m(\kappa_m(s)))\big(E(s-\kappa_m(s))-Id_H\big)X^m(\kappa_m(s))\|_{\mcal L_2^0}^2\ud s\\
	\le &\; Km\int_0^t(t-s)^{-\eta}\|(-A)^{-\frac{\sigma+1}{2}}\big(E(s-\kappa_m(s))-Id_H\big)\|_{\mcal L(H)}^2\mbf E\|X^m(\kappa_m(s))\|_{1+\sigma}^2\ud s \\
	\le &\; Km^{-\sigma}.
\end{align*}
Terms $A_{6,2}^m$ and $A_{6,4}^m$ can be estimated similarly based on Assumption \ref{assum2}, Lemma \ref{Xmregularity}, and the linear growth property of $F$:
\begin{align*}
	\mbf E\|A_{6,2}^m(t)\|_\eta^2=\;&m\mbf E\int_0^t\Big\|(-A)^{\frac{\eta}{2}}E(t-s)\mcal DG(X^m(\kappa_m(s)))\int_{\kappa_m(s)}^sE(s-\kappa_m(r))F(X^m(\kappa_m(r)))\ud r\Big\|_{\mcal L_2^0}^2\ud s \\
	\le &\; K\int_0^t(t-s)^{-\eta}\int_{\kappa_m(s)}^s(1+\mbf E\|X^m(\kappa_m(r))\|^2)\ud r\ud s
	\le Km^{-1},\\
	\mbf E\|A_{6,4}^m(t)\|_\eta^2\le&\; Km\int_0^t(t-s)^{-\eta}\mbf E\|X^m(s)-X^m(\kappa_m(s))\|_\alpha^4\ud s\le Km^{-1+2\max(\alpha-\sigma,0)}.
\end{align*}

Combining the previous estimates for $A_i^m$ with $i=4,5,6$, we have
\begin{align}
	II^m(t)=&\;\int_0^t E(t-s)\mcal DG(X(s))U^m(s)\ud W(s)\notag\\
	&\;-m^{\frac{1}{2}}\int_0^tE(t-s)\mcal DG(X^m(\kappa_m(s)))O^m(s)\ud W(s)+R_2^m(t),\quad t\in[0,T],\label{sec3eq3}
\end{align}
where $R_2^m(t):=A_{4,1}^m(t)+A_5^m(t)+A_{6,1}^m(t)+A_{6,2}^m(t)+A_{6,4}^m(t)$ satisfies for any $\epsilon\ll 1$ that 
\[
\sup\limits_{t\in[0,T]}\|R_2^m(t)\|_{\mbf L^2(\Omega;\dot{H}^\eta)}\le K(\epsilon)m^{-\min\big(\frac{1}{2}-\max(\alpha-\sigma,0),\frac{\sigma-\eta}{2}-\epsilon\big)}.
\]

Finally, the proof is finished as a result of \eqref{sec3eq1}, \eqref{sec3eq2'}, and \eqref{sec3eq3}.
\end{proof}

According to Lemma \ref{errordecomposition}, one can define the auxiliary process $\widetilde U^m$ by eliminating the residual term. 

\begin{lem}\label{auxiliary}
Let $\sigma\in(0,1)$ be given such that Assumptions \ref{assum1}--\ref{assum2} are fulfilled. Then for any $\eta\in[0,\sigma)$, $\sup\limits_{t\in[0,T]}\mbf E\|\widetilde{U}^m(t)\|^2_\eta\le K(T)$ and it holds
\begin{align*}
	\lim_{m\to\infty}\sup_{t\in[0,T]}\mbf E\|U^m(t)-\widetilde{U}^m(t)\|_\eta^2=0,
\end{align*}
where $\widetilde{U}^m(t)$ solves the following equation
\begin{align*}%\label{Utilde}
	\widetilde{U}^m(t)=&\;\int_0^tE(t-s)\mcal DF(X(s))\widetilde{U}^m(s)\ud s+\int_0^tE(t-s)\mcal DG(X(s))\widetilde{U}^m(s)\ud W(s) \notag\\
	&\;-m^{\frac{1}{2}}\int_0^tE(t-s)\mcal DG(X^m(\kappa_m(s)))O^m(s)\ud W(s),~t\in[0,T]
\end{align*}
with $O^m(s)$ defined in Lemma \ref{errordecomposition}.
\end{lem}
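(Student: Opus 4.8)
The plan is to read the defining relation for $\widetilde U^m$ as a linear SPDE in the unknown $\widetilde U^m$ with random but uniformly bounded coefficients $\mcal DF(X(s))$, $\mcal DG(X(s))$ and an explicit inhomogeneity, and then to compare it directly with the mild equation for $U^m$ supplied by Lemma~\ref{errordecomposition}. The first task is to control the forcing term
\[
\Phi^m(t):=-m^{\frac12}\int_0^tE(t-s)\mcal DG(X^m(\kappa_m(s)))O^m(s)\,\ud W(s).
\]
By the It\^o isometry, \eqref{semigroup1}, property (2) of Hilbert--Schmidt operators, and the bound \eqref{G'}, one gets $\mbf E\|\Phi^m(t)\|_\eta^2\le Km\int_0^t(t-s)^{-\eta}\mbf E\|O^m(s)\|^2\,\ud s$. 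Since $O^m$ is an It\^o integral over an interval of length at most $\tau=T/m$, the It\^o isometry together with the linear growth of $G$ and Lemma~\ref{Xmregularity}(i) gives $\mbf E\|O^m(s)\|^2\le Km^{-1}$, so that $m\,\mbf E\|O^m(s)\|^2\le K$ and hence $\sup_{t\in[0,T]}\mbf E\|\Phi^m(t)\|^2_\eta\le K\int_0^t(t-s)^{-\eta}\,\ud s\le K$ for every $\eta<1$. This both shows that the $\widetilde U^m$-equation is well posed and provides an $m$-independent bound on its inhomogeneity.

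Next I would establish existence, uniqueness, and the a priori bound $\sup_t\mbf E\|\widetilde U^m(t)\|^2_\eta\le K(T)$ by Picard iteration in $\mbf C([0,T];\mbf L^2(\Omega;\dot H^\eta))$. For the drift term I estimate $\big\|\int_0^tE(t-s)\mcal DF(X(s))v(s)\,\ud s\big\|_\eta$ using \eqref{semigroup1} and \eqref{F'}, which produces a factor $(t-s)^{-\eta/2}$; for the stochastic term I use the It\^o isometry with \eqref{semigroup1} and \eqref{G'}, which produces a factor $(t-s)^{-\eta}$. Because the coefficients are bounded only on $H$, the crucial point is that these smoothing factors allow me to replace the inner $H$-norm by the $\dot H^\eta$-norm via \eqref{norm}, at the price of an integrable singularity. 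Setting $\phi(t):=\mbf E\|\widetilde U^m(t)\|^2_\eta$ and applying the Cauchy--Schwarz inequality to the drift contribution, this yields $\phi(t)\le K+K\int_0^t(t-s)^{-\eta}\phi(s)\,\ud s$, and the weakly singular (fractional) Gronwall lemma gives $\sup_{t\in[0,T]}\phi(t)\le K(T)$ uniformly in $m$, since $\eta<\sigma<1$ keeps the kernel integrable.

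Finally, subtracting the two mild formulations, the difference $V^m:=U^m-\widetilde U^m$ solves the same linear equation but with the inhomogeneity replaced by the residual $R^m$ of Lemma~\ref{errordecomposition}:
\[
V^m(t)=\int_0^tE(t-s)\mcal DF(X(s))V^m(s)\,\ud s+\int_0^tE(t-s)\mcal DG(X(s))V^m(s)\,\ud W(s)+R^m(t).
\]
Running the estimates of the previous paragraph on $V^m$ gives $\mbf E\|V^m(t)\|^2_\eta\le K\sup_{s}\mbf E\|R^m(s)\|^2_\eta+K\int_0^t(t-s)^{-\eta}\mbf E\|V^m(s)\|^2_\eta\,\ud s$, so the same Gronwall argument yields $\sup_t\mbf E\|V^m(t)\|^2_\eta\le K\sup_t\mbf E\|R^m(t)\|^2_\eta$. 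Since $\eta<\sigma$ and $\alpha<\sigma+\frac12$, the exponent $\min\big(\frac12-\max(\alpha-\sigma,0),\frac{\sigma-\eta}2-\epsilon\big)$ from Lemma~\ref{errordecomposition} is strictly positive for small $\epsilon$, so the right-hand side tends to $0$, which is the claimed convergence. I expect the main obstacle to be closing the a priori estimate in $\dot H^\eta$ rather than merely in $H$: the coefficients $\mcal DF(X(s))$ and $\mcal DG(X(s))$ are bounded only on $H$, so the entire positive-regularity gain must be extracted from the parabolic smoothing of $E(t-s)$, which is precisely what forces the weakly singular Gronwall inequality and the restriction to $\eta<\sigma<1$.
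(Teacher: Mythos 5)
Your proposal is correct and takes essentially the same route as the paper: both compare the mild formulations of $U^m$ and $\widetilde U^m$ so that the common forcing term cancels, estimate the resulting linear equation for the difference via the It\^o isometry, the smoothing estimate \eqref{semigroup1}, the bounds \eqref{F'} and \eqref{G'}, and the embedding \eqref{norm}, and then close with the singular-kernel Gronwall inequality together with the positivity of the rate exponent from Lemma \ref{errordecomposition}. The only difference is that you flesh out the a priori bound $\sup_{t}\mbf E\|\widetilde U^m(t)\|_\eta^2\le K(T)$ (via the bound $\mbf E\|O^m(s)\|^2\le Km^{-1}$ and a Picard/Gronwall argument), which the paper asserts without detail.
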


\begin{proof}
Based on properties \eqref{semigroup1}--\eqref{semigroup2}, \eqref{Xspatial}, Lemma \ref{Xmregularity}, it can be shown that $\sup\limits_{t\in[0,T]}\mbf E\|\widetilde{U}^m(t)\|^2_\eta\le K(T)$. Then subtracting $\widetilde{U}^m$ from $U^m$, and using the It\^o isometry, the property \eqref{semigroup1}, Assumption \ref{assum2}, and Lemma \ref{errordecomposition}, we derive for $\epsilon\ll 1$ that 
\begin{align*}
	\mbf E\|U^m(t)-\widetilde{U}^m(t)\|_\eta^2
	\le&\; K\mbf E\int_0^t\|(-A)^{\frac{\eta}{2}}E(t-s)\mcal DF(X(s))(U^m(s)-\widetilde{U}^m(s))\|^2\ud s \\
	 &\;+K\mbf E\int_0^t\|(-A)^{\frac{\eta}{2}}E(t-s)\mcal DG(X(s))(U^m(s)-\widetilde{U}^m(s))\|_{\mcal L_2^0}^2\ud s\\
	 &\;+K(\eta,\epsilon)m^{-\min\big(1-2\max(\alpha-\sigma,0),\,\sigma-\eta-2\epsilon\big)} \\
	 \le &\; K\int_0^t(t-s)^{-\eta}\mbf E\|U^m(s)-\widetilde{U}^m(s)\|_\eta^2\ud s+K(\eta,\epsilon)m^{-\min\big(1-2\max(\alpha-\sigma,0),\,\sigma-\eta-2\epsilon\big)},
\end{align*}
where we used the fact $\|U^m(s)-\widetilde{U}^m(s)\|\le\lambda_1^{-\frac{\eta}2}\|U^m(s)-\widetilde{U}^m(s)\|_\eta$ in the last step according to \eqref{norm}. Then the proof is complete based on the Gronwall inequality. 
\end{proof}

\subsection{Finite-dimensional approximation $\widetilde U^{m,n}$ of $\widetilde{U}^m$}\label{Sec3.2}
In this part, we construct a finite-dimensional process $\widetilde U^{m,n}$ which uniformly approximates $\widetilde U^m(t)$ in the sense of Condition (A1) of Theorem \ref{uniform approximation}.

For $n\in\mbb N^+$, denote by $H_n:=\text{span}\{e_1,e_2,\ldots,e_n\}$ the $n$-dimensional subspace of $H$, and by $P_n:H\to H_n$ the projection operator defined by
$P_nv=\sum_{i=1}^n\LL v,e_i\RR e_i$ for any $v\in H$.  Define $A_n\in\mcal L(H_n)$ by $A_n:=AP_n$. Then $A_n$ generates a $C_0$-semigroup $\{E_n(t)=e^{tA_n}\}_{t\ge 0}$ on $H_n$. Further, 
define the operator $Q_n\in\mcal L(U)$ by $Q_nu=\sum_{k=1}^n\LL u,h_k\RR_U Qh_k$ and the truncated process
$W^n:=\sum_{k=1}^nQ^{\frac{1}{2}}h_k\beta_k$. It is easy to see that $Q_n$ is a symmetric and positive definite operator on $U$ with finite trace, and
$W^{n}$ is a $U$-valued $Q_n$-Wiener process.

Next, we define the finite-dimensional approximation process $\widetilde{U}^{m,n}$, which solves the following equation
\begin{align}\label{Utildemn}
	\widetilde U^{m,n}(t)=&\;\int_0^tE_n(t-s)P_n\mcal DF(X(s))\widetilde{U}^{m,n}(s)\ud s+\int_0^tE_n(t-s)P_n\mcal DG(X(s))\widetilde{U}^{m,n}(s)\ud W^n(s)\notag\\
	&\;-m^{\frac{1}{2}}\int_0^tE_n(t-s)P_n\mcal DG(X^m(\kappa_m(s)))O^{m,n}(s)\ud W^n(s),~t\in[0,T],
\end{align}
where $O^{m,n}(s):=\int_{\kappa_m(s)}^sE_n(s-\kappa_m(r))P_nG(X^m(\kappa_m(r)))\ud W^n(r)$.

The following fact shows that  Condition (A1) of Theorem \ref{uniform approximation} are fulfilled by $\widetilde{U}^{m,n}(t)$ and $\widetilde{U}^m(t)$.
\begin{lem}\label{Utildemn-Utildem}
	Let Assumptions \ref{assum1}--\ref{assum3} hold with $\beta_1\in(0,1)$. Then for any $\eta\in[0,1-\beta_1)$, it holds
	\begin{align*}
		\lim_{n\to\infty}\sup_{t\in[0,T]}\sup_{m\ge 1}\mbf E\|\widetilde{U}^{m,n}(t)-\widetilde{U}^m(t)\|_\eta^2=0. 
	\end{align*}
\end{lem}

\begin{proof}
Based on properties \eqref{semigroup1}--\eqref{semigroup2}, \eqref{Xspatial}, and Lemma \ref{Xmregularity},   one can show for any $\gamma\in[0,1)$ that 
\begin{align}
	\sup_{m\ge 1}\sup_{t\in[0,T]}\|\widetilde{U}^m(t)\|_{\mbf L^2(\Omega;\dot{H}^\gamma)}+\sup_{m,n\ge 1}\sup_{t\in[0,T]}\|\widetilde{U}^{m,n}(t)\|_{\mbf L^2(\Omega;\dot{H}^\gamma)}<\infty.\label{sec3eq4}
\end{align}	
	
Consider the decomposition $\widetilde{U}^{m,n}(t)-\widetilde{U}^m(t)=\sum_{i=1}^3S_i^{m,n}(t)$ with
\begin{align*}
	S_1^{m,n}(t)&:=\int_0^t\Big(E_n(t-s)P_n\mcal DF(X(s))\widetilde{U}^{m,n}(s)-E(t-s)\mcal DF(X(s))\widetilde{U}^m(s)\Big)\ud s, \\
	S_2^{m,n}(t)&:=\int_0^tE_n(t-s)P_n\mcal DG(X(s))\widetilde{U}^{m,n}(s)\ud W^n(s)-\int_0^tE(t-s)\mcal DG(X(s))\widetilde{U}^m(s)\ud W(s), \\
	S_3^{m,n}(t)&:=m^{\frac{1}{2}}\int_0^tE(t-s)\mcal DG(X^m(\kappa_m(s)))O^m(s)\ud W(s)\\
	&\qquad-m^{\frac{1}{2}}\int_0^tE_n(t-s)P_n\mcal DG(X^m(\kappa_m(s)))O^{m,n}(s)\ud W^n(s).
\end{align*}

Noting that $E_n(t)P_nu=E(t)P_nu$ for any $u\in H$, and
\begin{align}\label{Pn-Id}
	\|(-A)^{-\gamma}(P_n-Id_H)\|_{\mcal L(H)}= \lambda_{n+1}^{-\gamma},\quad\forall~\gamma\ge 0,
\end{align}
we have
\begin{align*}
	S_1^{m,n}(t)=&\;\int_0^tE(t-s)(P_n-Id_H)\mcal DF(X(s))\widetilde{U}^{m,n}(s)\ud s\\
	&\;+\int_0^tE(t-s)\mcal DF(X(s))(\widetilde{U}^{m,n}(s)-\widetilde{U}^m(s))\ud s \\
	=:&\; S_{1,1}^{m,n}(t)+S_{1,2}^{m,n}(t),
\end{align*}
where
\begin{align*}
	\|S_{1,1}^{m,n}(t)\|_{\mbf L^2(\Omega;\dot{H}^\eta)}\le&\; \int_0^t\|(-A)^{\frac{1+\eta}{2}}E(t-s)\|_{\mcal L(H)}\|(-A)^{-\frac{1}{2}}(P_n-Id_H)\|_{\mcal L(H)}\|\widetilde{U}^{m,n}(s)\|_{\mbf L^2(\Omega;H)}\ud s \\
	\le &\;K\lambda_{n+1}^{-\frac{1}{2}}\int_0^t(t-s)^{-\frac{1+\eta}{2}}\|\widetilde{U}^{m,n}(s)\|_{\mbf L^2(\Omega;H)}\ud s \le K\lambda_{n+1}^{-\frac{1}{2}}
\end{align*}
follows from \eqref{semigroup1},  \eqref{F'}, \eqref{sec3eq4}, and \eqref{Pn-Id}. 
By properties \eqref{norm}--\eqref{semigroup1}, and condition \eqref{F'}, it holds
\begin{align*}
	\|S_{1,2}^{m,n}(t)\|_{\mbf L^2(\Omega;\dot{H}^\eta)}\le K\int_0^t(t-s)^{-\frac{\eta}{2}}\|\widetilde{U}^{m,n}(s)-\widetilde{U}^m(s)\|_{\mbf L^2(\Omega;\dot{H}^\eta)}\ud s.
\end{align*}
Accordingly,
\begin{align}\label{S1mn}
	\sup_{m\ge 1}\mbf E\|S_1^{m,n}(t)\|^2_\eta\le K\int_0^t(t-s)^{-\eta}\sup_{m\ge 1}\mbf E\|\widetilde{U}^{m,n}(s)-\widetilde{U}^m(s)\|^2_\eta\ud s+K\lambda_{n+1}^{-1}.
\end{align}

For $S_2^{m,n}$, we decompose it into
\begin{align*}
	S_2^{m,n}(t)=&\;\int_0^tE(t-s)(P_n-Id_H)\mcal DG(X(s))\widetilde{U}^{m,n}(s)\ud W^n(s)\\
	&\;+\int_0E(t-s)\mcal DG(X(s))(\widetilde{U}^{m,n}(s)-\widetilde{U}^m(s))\ud W^n(s) \\
	&\;-\int_0^tE(t-s)\mcal DG(X(s))\widetilde U^m(s)\ud W^{Q-Q_n}(s) \\
	=:&\;S_{2,1}^{m,n}(t)+S_{2,2}^{m,n}(t)+S_{2,3}^{m,n}(t),
\end{align*}
where $W^{Q-Q_n}:=\sum_{k=n+1}^\infty Q^{\frac{1}{2}}h_k\beta_k$ is a $U$-valued $(Q-Q_n)$-Wiener process. By the It\^o isometry, \eqref{semigroup1}, \eqref{G'}, and \eqref{sec3eq4}, we derive
\begin{align*}
\mbf E\|S_{2,1}^{m,n}(t)\|_\eta^2=&\;\mbf E\int_0^t\|(-A)^{\frac{1+\eta}{4}}E(t-s)(-A)^{-\frac{1-\eta}{4}}(P_n-Id_H)\mcal DG(X(s))\widetilde{U}^{m,n}(s)Q_n^{\frac{1}{2}}\|_{\mcal L_2(U,H)}^2\ud s \\
\le &\; K\lambda_{n+1}^{-\frac{1-\eta}{2}}\int_0^t(t-s)^{-\frac{1+\eta}{2}}\mbf E\|\mcal DG(X(s))\widetilde{U}^{m,n}(s)\|_{\mcal L_2^0}^2\ud s
\le K\lambda_{n+1}^{-\frac{1-\eta}{2}}
\end{align*} 
and
\begin{align*}
	\mbf E\|S_{2,2}^{m,n}(t)\|_\eta^2&\le K\mbf E\int_0^t(t-s)^{-\eta}\|\mcal DG(X(s))(\widetilde{U}^{m,n}(s)-\widetilde{U}^m(s))\|_{\mcal L_2^0}^2\ud s \\
	&\le K\int_0^t(t-s)^{-\eta}\mbf E\|\widetilde{U}^{m,n}(s)-\widetilde{U}^m(s)\|_\eta^2\ud s.
\end{align*}
Denote by $P_{n,U}$ the orthogonal projection operator from $U$ to $\text{span}\{h_1,\ldots,h_n\}$. Then
\begin{align*}
	&\;\mbf E\|S_{2,3}^{m,n}(t)\|_\eta^2\\
	=&\;\mbf E\int_0^t\|(-A)^{\frac{\eta}{2}}E(t-s)\mcal DG(X(s))\widetilde U^m(s)Q^{\frac{1}{2}}(Id_U-P_{n,U})\|_{\mcal L_2(U,H)}^2\ud s \\
	\le &\; \mbf E\int_0^t\|(-A)^{\frac{\eta+\beta_1}{2}}E(t-s)\|_{\mcal L(H)}^2\|(-A)^{-\frac{\beta_1}{2}}\mcal DG(X(s))\widetilde{U}^m(s)Q^{-\frac{\beta_2}{2}}\|_{\mcal L_2^0}^2\|Q^{\frac{\beta_2}{2}}(Id_U-P_{n,U})\|_{\mcal L(U)}^2\ud s \\
	\le &\;K\big(\sup_{k\ge n+1}q_k\big)^{\beta_2}\int_0^t(t-s)^{-(\eta+\beta_1)}\mbf E\|\widetilde{U}^m(s)\|^2\ud s \\
	\le &\; K\big(\sup_{k\ge n+1}q_k\big)^{\beta_2}
\end{align*}
based on the It\^o isometry, \eqref{semigroup1}, \eqref{assum3.1}, and \eqref{sec3eq4}. We then obtain from the above estimates that
\begin{align}\label{S2mn}
	\sup_{m\ge 1}\mbf E\|S_2^{m,n}(t)\|^2_\eta \le K\int_0^t(t-s)^{-\eta}\sup_{m\ge 1}\mbf E\|\widetilde{U}^{m,n}(s)-\widetilde{U}^m(s)\|^2_\eta\ud s+K\lambda_{n+1}^{-\frac{1-\eta}{2}}+K\big(\sup_{k\ge n+1}q_k\big)^{\beta_2}.
\end{align}

Decompose $S_3^{m,n}$ similarly as
\begin{align*}
	S_3^{m,n}(t)=&\;m^{\frac{1}{2}}\int_0^tE(t-s)(Id_H-P_n)\mcal DG(X^m(\kappa_m(s)))O^m(s)\ud W(s) \\
	&\;+m^{\frac{1}{2}}\int_0^tE(t-s)P_n\mcal DG(X^m(\kappa_m(s)))\big(O^m(s)-O^{m,n}(s)\big)\ud W(s) \\
	&\;+m^{\frac{1}{2}}\int_0^tE(t-s)P_n\mcal DG(X^m(\kappa_m(s)))O^{m,n}(s)\ud W^{Q-Q_n}(s) \\
	=:&\;S_{3,1}^{m,n}(t)+S_{3,2}^{m,n}(t)+S_{3,3}^{m,n}(t).
\end{align*}
By noting that
\begin{align*}
	\mbf E\|O^m(s)\|^2=\int_{\kappa_m(s)}^s\mbf E\|E(s-\kappa_m(r))G(X^m(\kappa_m(r)))\|_{\mcal L_2^0}^2\ud r \le Km^{-1},
\end{align*}
one gets from the It\^o isometry, \eqref{semigroup1}, and \eqref{G'} that
\begin{align*}
	&\;\mbf E\|S_{3,1}^{m,n}(t)\|_\eta^2\\
	\le &\; m\mbf E\int_0^t\|(-A)^{\frac{\eta+1}{4}}E(t-s)\|_{\mcal L(H)}^2\|(-A)^{-\frac{1-\eta}{4}}(Id_H-P_n)\|_{\mcal L(H)}^2\|\mcal DG(X^m(\kappa_m(s)))O^m(s)\|_{\mcal L_2^0}^2\ud s \\
	\le &\; Km\lambda_{n+1}^{-\frac{1-\eta}{2}}\int_0^t(t-s)^{-\frac{1+\eta}{2}}\mbf E\|O^m(s)\|^2\ud s\le K\lambda_{n+1}^{-\frac{1-\eta}{2}}.
\end{align*}
The It\^o isometry, \eqref{Ggrow}, Lemma \ref{Xmregularity}(i), and \eqref{assum3.2} lead to
\begin{align}
	&\;\mbf E\|O^m(s)-O^{m,n}(s)\|^2 \notag\\
	\le &\; 2\mbf E\Big\|\int_{\kappa_m(s)}^sE(s-\kappa_m(r))(Id_H-P_n)G(X^m(\kappa_m(r)))\ud W(r)\Big\|^2 \notag\\
     &\;+2\mbf E\Big\|\int_{\kappa_m(s)}^sE(s-\kappa_m(r))P_nG(X^m(\kappa_m(r)))\ud W^{Q-Q_n}(r)\Big\|^2 \notag \\
     \le &\; 2\mbf E\int_{\kappa_m(s)}^s\|(-A)^{-\frac{\sigma}{2}}(Id_H-P_n)\|_{\mcal L(H)}^2\|(-A)^{\frac{\sigma}{2}}G(X^m(\kappa_m(r)))\|_{\mcal L_2^0}^2\ud r \notag\\
     &\;+2\mbf E\int_{\kappa_m(s)}^s\|E(s-\kappa_m(r))P_nG(X^m(\kappa_m(r)))Q^{\frac{1}{2}}(Id_U-P_{n,U})\|_{\mcal L_2(U,H)}^2\ud r \notag\\
     \le &\; K\lambda_{n+1}^{-\sigma}\int_{\kappa_m(s)}^s(1+\mbf E\|X^m(\kappa_m(r))\|_\sigma^2)\ud r \notag\\
     &\;+K\int_{\kappa_m(s)}^s\mbf E\|G(X^m(\kappa_m(r)))Q^{-\frac{\beta_2}{2}}\|_{\mcal L_2^0}^2\|Q^{\frac{\beta_2}{2}}(Id_U-P_{n,U})\|_{\mcal L(U)}^2\ud r \notag \\
     \le &\; Km^{-1}\big(\lambda_{n+1}^{-\sigma}+\big(\sup_{k\ge n+1}q_k\big)^{\beta_2}\big). \label{Om-Omn}
\end{align}
Applying the It\^o isometry, \eqref{semigroup1}, \eqref{G'}, and \eqref{Om-Omn}, one has
\begin{align*}
\mbf E\|S_{3,2}^{m,n}(t)\|_\eta^2=&\;m\mbf E\int_0^t\|(-A)^{\frac{\eta}{2}}E(t-s)P_n\mcal DG(X^m(\kappa_m(s)))\big(O^m(s)-O^{m,n}(s)\big)\|_{\mcal L_2^0}^2\ud s \\
\le&\; Km\int_0^t(t-s)^{-\eta}\mbf E\|O^m(s)-O^{m,n}(s)\|^2\ud s \\
\le&\; K\big(\lambda_{n+1}^{-\sigma}+\big(\sup_{k\ge n+1}q_k\big)^{\beta_2}\big).
\end{align*}
Similar to the estimate of  $\mbf E\|O^m(s)\|^2$, we obtain 
\begin{align}
	\mbf E\|O^{m,n}(s)\|^2\le Km^{-1}, \label{EOmn2}
\end{align}
which, together with the It\^o isometry, \eqref{semigroup1}, and \eqref{assum3.1}, gives
\begin{align*}
&\;\mbf E\|S_{3,3}^{m,n}(t)\|_\eta^2 \\
=&\;m\mbf E\int_0^t\|(-A)^{\frac{\eta+\beta_1}{2}}E(t-s)P_n(-A)^{-\frac{\beta_1}{2}}\mcal DG(X^m(\kappa_m(s)))O^{m,n}(s)Q^{\frac{1}{2}}\big(Id_U-P_{n,U}\big)\|_{\mbf L_2(U,H)}^2\ud s \\
\le&\; Km\mbf E \int_0^t(t-s)^{-(\eta+\beta_1)}\|(-A)^{-\frac{\beta_1}{2}}\mcal DG(X^m(\kappa_m(s)))O^{m,n}(s)Q^{-\frac{\beta_2}{2}}\|_{\mcal L_2^0}^2\|Q^{\frac{\beta_2}{2}}(Id_U-P_{n,U})\|_{\mcal L(U)}^2\ud s \\
\le&\; Km\big(\sup_{k\ge n+1}q_k\big)^{\beta_2}\int_0^t(t-s)^{-(\eta+\beta_1)}\mbf E\|O^{m,n}(s)\|^2\ud s \\
\le&\; K\big(\sup_{k\ge n+1}q_k\big)^{\beta_2}.
\end{align*}
As a result, 
\begin{align}\label{S3mn}
	\sup_{m\ge 1}\mbf E\|S^{m,n}_3(t)\|_\eta^2\le K\lambda_{n+1}^{-\frac{1-\eta}{2}} +K\lambda_{n+1}^{-\sigma}+K\big(\sup_{k\ge n+1}q_k\big)^{\beta_2}.
\end{align}

Combining \eqref{S1mn}, \eqref{S2mn}, and \eqref{S3mn}, we derive
\begin{align*}
	&\;\sup_{m\ge1}\mbf E\|\widetilde{U}^{m,n}(t)-\widetilde{U}^m(t)\|_\eta^2\le K\sum_{i=1}^3\sup_{m\ge1}\mbf E\|S_i^{m,n}(t)\|_\eta^2 \\
 \le 	&\; K\int_0^t(t-s)^{-\eta}\sup_{m\ge1}\mbf E\|\widetilde{U}^{m,n}(s)-\widetilde{U}^m(s)\|_\eta^2 \ud s+K\Big(\lambda_{n+1}^{-\min(\sigma,\frac{1-\eta}{2})}+\big(\sup_{k\ge n+1}q_k\big)^{\beta_2}\Big).
\end{align*}
Then applying the Gronwall inequality yields
\begin{align*}
	\sup_{t\in[0,T]}\sup_{m\ge 1}\mbf E\|\widetilde{U}^{m,n}(t)-\widetilde{U}^m(t)\|_\eta^2 \le K\Big(\lambda_{n+1}^{-\min(\sigma,\frac{1-\eta}{2})}+\big(\sup_{k\ge n+1}q_k\big)^{\beta_2}\Big),
\end{align*} 
which finises the proof due to the fact $\lim\limits_{n\to\infty}q_n=0$. 
\end{proof}

\subsection{Limit distribution of $\widetilde{U}^{m,n}$ as $m\to\infty$}
In this part,  we fix $n\in\mbb N^+$ and investigate the limit distribution of $\widetilde{U}^{m,n}$ in $H_n$ as $m\to\infty$. To this end, we rewrite \eqref{Utildemn} into the strong solution form
\begin{align*}%\label{Utildemn2}
	\widetilde{U}^{m,n}(t)=\int_0^t\big(A_n\widetilde{U}^{m,n}(s)+P_n\mcal DF(X(s))\widetilde{U}^{m,n}(s)\big)\ud s+\int_0^tP_n\mcal DG(X(s))\widetilde{U}^{m,n}(s)\ud W^n(s) -\widetilde{V}^m(t),
\end{align*}
where
\begin{align}\label{Vtildem}
\widetilde{V}^m(t):=m^{\frac{1}{2}}\int_0^tP_n\mcal DG(X^m(\kappa_m(s)))O^{m,n}(s)\ud W^n(s).
\end{align}  
Here we drop the index $n$ in $\widetilde{V}^m$ for convenience. For a fixed $n$, the convergence in distribution of $\widetilde{U}^{m,n}(t)$ is a result of that of $\widetilde{V}^m$, and the latter is stated in the following lemma. 
\begin{lem}\label{VtildemCon}
	Let Assumptions \ref{assum1} and \ref{assum2} hold. Then for any fixed $n\in\mbb N^+$, $\widetilde{V}^m$ defined by \eqref{Vtildem} stably converges in law to $\widetilde{V}$ in $\mbf C([0,T];H_n)$ as $m\to\infty$, where
	\begin{align*}
		\widetilde{V}(t)=\sqrt{\frac{T}{2}}\sum_{l=1}^n\int_0^tP_n\mcal DG(X(s))\big(P_nG(X(s))Q^{\frac{1}{2}}h_l\big)\ud \widetilde{W}_l^n(s),~t\in[0,T].
	\end{align*}
	Here $\widetilde{W}_l^n(t):=\sum_{k=1}^nQ^{\frac{1}{2}}h_k\tilde\beta_{k,l}(t)$ with $\{\tilde\beta_{k,l}\}_{k,l=1,\ldots,n}$ being a family of independent real-valued standard Brownian motions which are independent of $\{\beta_k\}_{k\ge1}$.
\end{lem}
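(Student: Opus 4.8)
The plan is to reduce $\widetilde V^m$ to a finite sum of scalar iterated It\^o integrals and then apply a martingale stable central limit theorem together with the Kurtz--Protter stability theorem for stochastic integrals \cite{Protter1991,Jacod1997}. Since $\kappa_m$ is constant on each subinterval, the integrand of $O^{m,n}(s)$ is frozen on $[\kappa_m(s),s]$, so
\[ O^{m,n}(s)=E_n(s-\kappa_m(s))P_nG(X^m(\kappa_m(s)))\,\Delta W^n(s),\qquad \Delta W^n(s):=W^n(s)-W^n(\kappa_m(s)). \]
Expanding $W^n=\sum_{k=1}^n Q^{\frac12}h_k\beta_k$ and writing $\Delta\beta_k(s):=\beta_k(s)-\beta_k(\kappa_m(s))$, one obtains
\[ \widetilde V^m(t)=m^{\frac12}\sum_{j,k=1}^n\int_0^t\Xi^m_{jk}(s)\,\Delta\beta_k(s)\,\ud\beta_j(s), \]
where $\Xi^m_{jk}(s):=P_n\big[\mcal DG(X^m(\kappa_m(s)))\big(E_n(s-\kappa_m(s))P_nG(X^m(\kappa_m(s)))Q^{\frac12}h_k\big)\big]Q^{\frac12}h_j\in H_n$.

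Because $H_n$ is finite-dimensional, $A_n$ is bounded and $\|E_n(s-\kappa_m(s))-Id_{H_n}\|_{\mcal L(H_n)}\le Km^{-1}$. Combining this with the strong convergence $X^m\to X$ in $\dot H^\alpha$ (Corollary \ref{cor1}) and the Lipschitz continuity of $G$ and $\mcal DG$ (Assumption \ref{assum2}), an It\^o-isometry estimate shows that replacing $\Xi^m_{jk}$ by
\[ \Xi_{jk}(s):=P_n\big[\mcal DG(X(s))\big(P_nG(X(s))Q^{\frac12}h_k\big)\big]Q^{\frac12}h_j \]
alters $\widetilde V^m$ only by a term vanishing in $\mbf L^2(\Omega;\mbf C([0,T];H_n))$: the factor $m^{-1}$ gained from $\mbf E|\Delta\beta_k(s)|^2\le Km^{-1}$ compensates the prefactor $m^{\frac12}$. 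It therefore suffices to analyze $\overline V^m(t):=\sum_{j,k}\int_0^t\Xi_{jk}(s)\,\ud N^m_{jk}(s)$, where $N^m_{jk}(t):=m^{\frac12}\int_0^t\Delta\beta_k(s)\,\ud\beta_j(s)$ are scalar continuous martingales and the $\Xi_{jk}$ are fixed, continuous, $\mcal F$-adapted $H_n$-valued integrands.

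The core step is to prove that the $\mbb R^{n^2}$-valued martingale $(N^m_{jk})_{j,k}$ converges stably in law to $(\sqrt{T/2}\,\tilde\beta_{jk})_{j,k}$, a family of independent standard Brownian motions independent of $\mcal F$. I would verify the hypotheses of the martingale stable CLT by computing predictable brackets. First, $\langle N^m_{jk},N^m_{j'k'}\rangle_t=\delta_{jj'}\,m\int_0^t\Delta\beta_k(s)\Delta\beta_{k'}(s)\,\ud s$; for $k=k'$ its expectation equals $m\int_0^t(s-\kappa_m(s))\,\ud s\to\frac T2 t$ and it concentrates (variance $O(m^{-1})$), while for $k\ne k'$ it tends to $0$, so $\langle N^m_{jk},N^m_{j'k'}\rangle_t\to\delta_{jj'}\delta_{kk'}\frac T2 t$ in probability; the limit $\frac T2 t$ is exactly what produces the scaling $\sqrt{T/2}$. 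Crucially, for every $i\ge1$, $\langle N^m_{jk},\beta_i\rangle_t=\delta_{ij}\,m^{\frac12}\int_0^t\Delta\beta_k(s)\,\ud s$, which vanishes identically for $i\ne j$ and tends to $0$ in probability (variance $O(m^{-1})$) for $i=j$. This asymptotic orthogonality of the limit to the entire driving noise is the mechanism forcing the appearance of new Wiener processes independent of $\mcal F$.

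Finally, $\sup_m\mbf E\langle N^m_{jk}\rangle_T<\infty$, so $(N^m_{jk})$ is a good (uniformly controlled variation) sequence of integrators; since the $\Xi_{jk}$ are fixed and continuous, the Kurtz--Protter theorem upgrades the stable convergence of the integrators to
\[ \overline V^m\ \overset{stably}{\Longrightarrow}\ \sqrt{\tfrac T2}\sum_{j,k=1}^n\int_0^\cdot\Xi_{jk}(s)\,\ud\tilde\beta_{jk}(s)\quad\text{in }\mbf C([0,T];H_n), \]
and the relabeling $\tilde\beta_{jk}=\tilde\beta_{k,l}$ with $\widetilde W_l^n=\sum_{k}Q^{\frac12}h_k\tilde\beta_{k,l}$ identifies the right-hand side with $\widetilde V$. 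Tightness in $\mbf C([0,T];H_n)$, rather than mere convergence of finite-dimensional distributions, follows from the Kolmogorov criterion applied to the It\^o-isometry increment bounds for $\overline V^m$. I expect the main obstacle to be the stable CLT of the previous paragraph: both pinning down the exact bracket limit $\frac T2 t$ through the law-of-large-numbers computation and verifying the vanishing of $\langle N^m_{jk},\beta_i\rangle$ for all $i$, which together yield a limit driven by genuinely new Brownian motions independent of the original noise.
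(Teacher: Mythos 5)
Your overall architecture is sound and it is a genuinely different organization of the argument from the paper's. The paper applies Jacod's theorem \cite[Theorem 4-1]{Jacod1997} directly to the coordinates $V^{m,i}=\LL\widetilde V^m,e_i\RR$, so the random coefficients $C^m_{k,i,l}$ stay inside the predictable brackets, and it must afterwards invoke the martingale representation theorem to identify the $\mcal F$-conditional Gaussian limit as $\sqrt{T/2}\sum_{k,l}\int C_{k,i,l}\,\ud\tilde\beta_{k,l}$. You instead factor $\widetilde V^m$ (up to a remainder) as $\sum_{j,k}\int\Xi_{jk}\,\ud N^m_{jk}$ with the coefficient-free martingales $N^m_{jk}=m^{1/2}\int_0^\cdot\Delta\beta_k\,\ud\beta_j$; their limiting brackets are deterministic, so the stable limit is immediately $\sqrt{T/2}$ times a family of fresh independent Brownian motions (no representation step), and the Kurtz--Protter theorem \cite{Protter1991}, with the UCV bound $\sup_m\mbf E[N^m_{jk}]_T=T^2/2<\infty$, transfers the convergence to the integrals. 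Your bracket computations ($\LL N^m_{jk},\beta_i\RR\to 0$, $\LL N^m_{jk},N^m_{j'k'}\RR_t\to\frac T2 t\,\delta_{jj'}\delta_{kk'}$) are exactly the coefficient-free versions of the paper's Steps 1--2, done by hand rather than via \cite[Proposition 4.2]{HJWY24}. This skeleton is correct and, in the identification of the limit, cleaner.

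The genuine gap is the replacement of $\Xi^m_{jk}$ by $\Xi_{jk}$ in $\mbf L^2$. By the It\^o isometry the squared error is $m\sum_j\mbf E\int_0^t\big\|\sum_k(\Xi^m_{jk}(s)-\Xi_{jk}(s))\Delta\beta_k(s)\big\|^2\ud s$, so you must control $m\,\mbf E\big[\|\Xi^m_{jk}(s)-\Xi_{jk}(s)\|^2\Delta\beta_k(s)^2\big]$. Your accounting treats $\mbf E|\Delta\beta_k(s)|^2\le Km^{-1}$ as if it factored out of this expectation. That factorization is legitimate only when the coefficient difference is $\mcal F_{\kappa_m(s)}$-measurable; but $\Xi_{jk}(s)$ is built from $X(s)$, hence is $\mcal F_s$- and not $\mcal F_{\kappa_m(s)}$-measurable, so independence fails. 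The fallback, Cauchy--Schwarz, demands $\big(\mbf E\|\Xi^m_{jk}(s)-\Xi_{jk}(s)\|^4\big)^{1/2}\to0$; because of the $\mcal DG$-difference term, which by \eqref{G''} is controlled by $\|X^m(\kappa_m(s))-X(s)\|_\alpha\big(1+\|X^m(\kappa_m(s))\|\big)$ (this is the paper's term $T_1^m$, already estimated there in $\mbf L^2$ using $\mbf L^4$-moments of each factor), the fourth moment of the coefficient difference requires \emph{eighth} moments of the solution, whereas Assumption \ref{assum1} and all regularity estimates (Lemma \ref{Xmregularity}, \eqref{sec3eq9}) provide only $p\ge4$. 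The paper's ordering of steps is designed to dodge exactly this: It\^o's formula is applied to $(\Delta\beta_l)^2$ \emph{first}, so the coefficients end up paired either with martingale block-increments (killed in $\mbf L^2$ by orthogonality across blocks) or with the deterministic sawtooth $\frac{ms}{T}-\lfloor\frac{ms}{T}\rfloor$, and only the $\mbf L^2$-convergence \eqref{claim} of $C^m_{k,i,l}$ is ever needed.

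The repair closest to your plan is to not replace the integrand in $\mbf L^2$ at all: Kurtz--Protter allows $m$-dependent integrands, so it suffices to have joint (stable) convergence $(\Xi^m_{jk},N^m_{jk})\Rightarrow(\Xi_{jk},\sqrt{T/2}\,\tilde\beta_{jk})$, for which uniform-in-time convergence in probability of $\Xi^m_{jk}$ to $\Xi_{jk}$ is enough; this follows under $p=4$ from a continuous $\dot H^\alpha$-version of $X$ (Kolmogorov's criterion, using $\alpha<\sigma+\frac12$) together with a union bound over the $m$ grid points via \eqref{sec3eq9}. Alternatively, perform the It\^o decomposition of $(\Delta\beta_k)^2$ before any replacement, as the paper does. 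A minor further point: your closing tightness remark needs BDG fourth-moment increment bounds, since It\^o-isometry second moments give only $|t-s|$, which is insufficient for the Kolmogorov criterion; this is moot, however, because Kurtz--Protter already delivers convergence at the level of $\mbf C([0,T];H_n)$.
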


\begin{proof}
Denote $V^{m,i}(t)=\LL\widetilde{V}^m(t),e_i\RR$ with $i=1,\ldots,n$ and $V^m(t)=(V^{m,1}(t),\ldots,V^{m,n}(t))^\top$ for $t\in[0,T]$. By \eqref{Vtildem}, we have
\begin{align*}
V^{m,i}(t)=m^{\frac{1}{2}}\sum_{k=1}^n\int_0^t\LL P_n\mcal DG(X^m(\kappa_m(s)))O^{m,n}(s)Q^{\frac{1}{2}}h_k,e_i\RR\ud\beta_k(s). 
\end{align*}
Noting that $\mbf C([0,T];H_n)$ is isometric to $\mbf C([0,T];\mbb R^n)$, it suffices to prove the stable convergence in law of $V^m$ in   $\mbf C([0,T];\mbb R^n)$.	

Next, we will apply \cite[Theorem 4-1]{Jacod1997} to give the stable convergence in law of $V^m$.
Denote by $\LL Y_1,Y_2\RR_t$, $t\in[0,T]$ the cross variation process between real-valued semi-martingales $\{Y_1(t)\}_{t\in[0,T]}$ and $\{Y_2(t)\}_{t\in[0,T]}$. 

{\it Step 1. Convergence of $\LL V^{m,i},\beta_j\RR_t$.}
It holds that
\begin{align*}
	\LL V^{m,i},\beta_j\RR_t=m^{\frac{1}{2}}\int_0^t \LL P_n\mcal DG(X^m(\kappa_m(s)))O^{m,n}(s)Q^{\frac{1}{2}}h_j,e_i\RR \ud s=
 m^{\frac{1}{2}}\sum_{l=0}^{\lfloor\frac{t}{\tau}\rfloor}J_l(t),
\end{align*}
where $J_l(t):=\int_{t_l}^{t_{l+1}\wedge t}\LL P_n\mcal DG(X^m(t_l))O^{m,n}(s)Q^{\frac{1}{2}}h_j,e_i\RR\ud s$.
Noting that for any $l_2>l_1$, $$\mbf E(J_{l_1}(t)J_{l_2}(t))=\mbf E\big(J_{l_1}(t)\mbf E(J_{l_2}(t)|\mcal F_{t_{l_2}})\big)=0,$$
we then have $\mbf E(\LL V^{m,i},\beta_j\RR_t)^2=m\sum_{l=0}^{\lfloor\frac{t}{\tau}\rfloor}\mbf E|J_l(t)|^2$. 
It follows from \eqref{G'}, \eqref{EOmn2}, and the fact $\|BQ^{\frac{1}{2}}h_j\|\le \|B\|_{\mcal L_2^0}$ for any $B\in\mcal L_2^0$ that
\begin{align*}
	\mbf E|J_l(t)|^2\le&\; Km^{-1}\int_{t_l}^{t_{l+1}\wedge t}\mbf E\|\mcal DG(X^m(t_l))O^{m,n}(s)\|_{\mcal L_2^0}^2\ud s \\
	\le &\; Km^{-1}\int_{t_l}^{t_{l+1}\wedge t}\mbf E\|O^{m,n}(s)\|^2\ud s\le Km^{-3},
\end{align*}
which leads to
\begin{align}\label{converge1}
	 \mbf E(\LL V^{m,i},\beta_j\RR_t)^2\le Km^{-1}\to 0,\quad\forall~i,j=1,\ldots,n,~t\in[0,T].
\end{align}

{\it Step 2. Convergence of $\LL V^{m,i}, V^{m,j}\RR_t$.}
A direct computation shows
\begin{align*}
	&\;\LL V^{m,i},V^{m,j}\RR_t\\
	=&\;m\sum_{k=1}^n\int_0^t\LL P_n\mcal DG(X^m(\kappa_m(s)))O^{m,n}(s)Q^{\frac{1}{2}}h_k,e_i\RR\LL P_n\mcal DG(X^m(\kappa_m(s)))O^{m,n}(s)Q^{\frac{1}{2}}h_k,e_j\RR\ud s\\
	=&\;m\sum_{k,l_1,l_2=1}^n\int_0^tC^m_{k,i,l_1}(s)C^m_{k,j,l_2}(s)\big(\beta_{l_1}(s)-\beta_{l_1}(\kappa_m(s))\big)\big(\beta_{l_2}(s)-\beta_{l_2}(\kappa_m(s))\big)\ud s,
\end{align*}
where we used the fact $O^{m,n}(s)=\sum_{l=1}^nE_n(s-\kappa_m(s))P_nG(X^m(\kappa_m(s)))Q^{\frac{1}{2}}h_l\big(\beta_l(s)-\beta_l(\kappa_m(s))\big)$ and the notation $C^m_{k,i,l}(s):=\big\LL P_n\mcal DG(X^m(\kappa_m(s)))\big(E_n(s-\kappa_m(s))P_nG(X^m(\kappa_m(s)))Q^{\frac{1}{2}}h_l\big)Q^{\frac{1}{2}}h_k,e_i\big\RR$.
Note also that $\big(\beta_l(s)-\beta_l(\kappa_m(s))\big)^2=2\int_{\kappa_m(s)}^s\big(\beta_l(r)-\beta_l(\kappa_m(s))\big)\ud\beta_l(r)+(s-\kappa_m(s))$. It then follows
\begin{align*}
	\LL V^{m,i},V^{m,j}\RR_t
=&\;m\sum_{k=1}^n\sum_{\substack{l_1\neq l_2 \\l_1,l_2=1,\ldots,n}}\int_0^t	C^m_{k,i,l_1}(s)C^m_{k,j,l_2}(s)\big(\beta_{l_1}(s)-\beta_{l_1}(\kappa_m(s))\big)\big(\beta_{l_2}(s)-\beta_{l_2}(\kappa_m(s))\big)\ud s	\\
&+2m\sum_{k,l=1}^n\int_0^t	C^m_{k,i,l}(s)C^m_{k,j,l}(s)\int_{\kappa_m(s)}^s\big(\beta_l(r)-\beta_l(\kappa_m(s))\big)\ud\beta_l(r)\ud s \\
&+T\sum_{k,l=1}^n\int_0^t	C^m_{k,i,l}(s)C^m_{k,j,l}(s)(\frac{ms}{T}-\lfloor\frac{ms}{T}\rfloor)\ud s \\
=:&\; B^m_1(t)+B^m_2(t)+B^m_3(t).
\end{align*}

Denote $M_{k,i,j,l_1,l_2,p}(t):=\int_{t_p}^{t_{p+1}\wedge t}C^m_{k,i,l_1}(s)C^m_{k,j,l_2}(s)\big(\beta_{l_1}(s)-\beta_{l_1}(\kappa_m(s))\big)\big(\beta_{l_2}(s)-\beta_{l_2}(\kappa_m(s))\big)\ud s$. Then it holds
\begin{align}\label{sec3eq6}
	&\;\mbf E\Big(\int_0^tC^m_{k,i,l_1}(s)C^m_{k,j,l_2}(s)\big(\beta_{l_1}(s)-\beta_{l_1}(\kappa_m(s))\big)\big(\beta_{l_2}(s)-\beta_{l_2}(\kappa_m(s))\big)\ud s\Big)^2\notag \\
	= &\;\sum_{p=0}^{\lfloor\frac{t}{\tau}\rfloor}\mbf E|M_{k,i,j,l_1,l_2,p}(t)|^2+\sum_{p_1\neq p_2}^{\lfloor\frac{t}{\tau}\rfloor}\mbf E\big(M_{k,i,j,l_1,l_2,p_1}(t)M_{k,i,j,l_1,l_2,p_2}(t)\big). 
\end{align}
For $l_1\neq l_2$, 
\begin{align*}
	&\;\mbf E\big(M_{k,i,j,l_1,l_2,p_2}(t)|\mcal F_{t_{p_2}}\big)\\
	=&\;\int_{t_{p_2}}^{t_{p_2+1}\wedge t}C^m_{k,i,l_1}(s)C^m_{k,j,l_2}(s)\mbf E\big(\beta_{l_1}(s)-\beta_{l_1}(\kappa_m(s))\big)\mbf E\big(\beta_{l_2}(s)-\beta_{l_2}(\kappa_m(s))\big)\ud s=0.
\end{align*}
Accordingly, for $l_1\neq l_2$ and $p_2>p_1$, one has
\begin{align}\label{sec3eq7}
\mbf E\big(M_{k,i,j,l_1,l_2,p_1}(t)M_{k,i,j,l_1,l_2,p_2}(t)\big)=\mbf E\big(M_{k,i,j,l_1,l_2,p_1}(t)\mbf E\big(M_{k,i,j,l_1,l_2,p_2}(t)|\mcal F_{t_{p_2}}\big)\big)=0.
\end{align}
Applying \eqref{G'}, $\|BQ^{\frac{1}{2}}h_j\|\le \|B\|_{\mcal L_2^0}$ for $B\in\mcal L_2^0$, and Lemma \ref{Xmregularity}(i) yields
\begin{align*}
	\mbf E|C^m_{k,i,j}(s)|^4\le&\; \mbf E\|\mcal DG(X^m(\kappa_m(s)))\big(E_n(s-\kappa_m(s))P_nG(X^m(\kappa_m(s)))Q^{\frac{1}{2}}h_l\big)\|_{\mcal L_2^0}^4 \\
	\le&\; K\mbf E\|G(X^m(\kappa_m(s)))\|_{\mbf L_2^0}^4\le K(1+\mbf E\|X^m(\kappa_m(s))\|^4)\le K.
\end{align*}
Then we deduce that 
\begin{align}
	\mbf E|M_{k,i,j,l_1,l_2,p}(t)|^2 
	\le&\; Km^{-1} \int_{t_p}^{t_{p+1}\wedge t}(\mbf E|C^m_{k,i,l_1}(s)|^4)^{\frac{1}{2}}(\mbf E|C^m_{k,j,l_2}(s)|^4)^{\frac{1}{2}}\notag\\
	&\qquad\qquad\times\mbf E\big[\big(\beta_{l_1}(s)-\beta_{l_1}(\kappa_m(s))\big)^2\big(\beta_{l_2}(s)-\beta_{l_2}(\kappa_m(s))\big)^2\big]\ud s\notag\\
	\le &\; Km^{-4}. \label{sec3eq8}
\end{align}
Plugging \eqref{sec3eq7} and \eqref{sec3eq8} into \eqref{sec3eq6} produces
\begin{align*}
	\mbf E\Big(\int_0^tC^m_{k,i,l_1}(s)C^m_{k,j,l_2}(s)\big(\beta_{l_1}(s)-\beta_{l_1}(\kappa_m(s))\big)\big(\beta_{l_2}(s)-\beta_{l_2}(\kappa_m(s))\big)\ud s\Big)^2\le Km^{-3}.
\end{align*}
Therefore, we have
\begin{align*}
	\|B_1^m(t)\|_{\mbf L^2(\Omega;\mbb R)}\le Km\sum_{k=1}^n\sum_{l_1\neq l_2}^{n}m^{-\frac{3}{2}}\le K(n)m^{-\frac{1}{2}}\to 0.
\end{align*}
Similarly, one can prove $\|B_2^m(t)\|_{\mbf L^2(\Omega;\mbb R)}\le K(n)m^{-\frac{1}{2}}\to 0.$

For the convergence of $B_3^m$, denote 
$C_{k,i,l}(s):=\LL P_n\mcal DG(X(s))\big(P_nG(X(s)Q^{\frac{1}{2}}h_l)\big)Q^{\frac{1}{2}}h_k,e_i\RR$.
We claim for any $k,i,l=1,\ldots,n$ and $s\in[0,T]$ that
\begin{align}
	\|C^m_{k,i,l}(s)-C_{k,i,j}(s)\|_{\mbf L^2(\Omega;\mbb R)}\le K(n)m^{-\frac{1}{2}(1-\max(\alpha-\sigma,0))}.\label{claim}
\end{align}
In fact, 
\begin{align*}
	&\;C^m_{k,i,l}(s)-C_{k,i,l}(s)\\
	=&\;\big\LL P_n\big(\mcal DG(X^m(\kappa_m(s)))-\mcal DG(X(s))\big)\big(E_n(s-\kappa_m(s))P_nG(X^m(\kappa_m(s)))Q^{\frac{1}{2}}h_l\big)Q^{\frac{1}{2}}h_k,e_i\big\RR \\
	&\;+\big\LL P_n\mcal DG(X(s))\big((E_n(s-\kappa_m(s))-Id_{H_n})P_nG(X^m(\kappa_m(s)))Q^{\frac{1}{2}}h_l\big)Q^{\frac{1}{2}}h_k,e_i\big\RR \\
	&\;+\big\LL P_n\mcal DG(X(s))\big(P_n(G(X^m(\kappa_m(s)))-G(X(s)))Q^{\frac{1}{2}}h_l\big)Q^{\frac{1}{2}}h_k,e_i\big\RR\\
	=:&\; T_1^m(s)+T_2^m(s)+T_3^m(s).
\end{align*}
By Lemma \ref{Xmregularity}(ii) and Corollary \ref{cor1}, for any $\gamma\in[0,1+\sigma)$,
\begin{align}\label{sec3eq9}
	\sup_{s\in[0,T]}\|X^m(\kappa_m(s))-X(s)\|_{\mbf L^p(\Omega;\dot{H}^\gamma)}\le Km^{-\frac{1}{2}\big(1-\max(\gamma-\sigma,0)\big)}.
\end{align}
Applying the Taylor theorem for $\mcal DG$, and using $\|BQ^{\frac{1}{2}}h_k\|\le \|B\|_{\mcal L_2^0}$ for $B\in\mcal L_2^0$, $\|u\|_\alpha\le\lambda_n^{\frac{\alpha}{2}}\|u\|$ for $u\in H_n$, and \eqref{G''}, we have
\begin{align*}
	|T_1^m(s)|
	\le&\; \Big\|\int_0^1\mcal D^2G(X(s)+\lambda(X^m(\kappa_m(s))-X(s)))\\
	&\quad\big(X^m(\kappa_m(s))-X(s),E_n(s-\kappa_m(s))P_nG(X^m(\kappa_m(s)))Q^{\frac{1}{2}}h_l\big)\ud s\Big\|_{\mcal L_2^0} \\
	\le &\; K(n)\|X^m(\kappa_m(s))-X(s)\|_\alpha\|G(X^m(\kappa_m(s)))\|_{\mcal L_2^0} \\
	\le &\;K(n)\|X^m(\kappa_m(s))-X(s)\|_\alpha(1+\|X^m(\kappa_m(s))\|). 
\end{align*}
Then we deduce from the H\"older inequality, Lemma \ref{Xmregularity}(i), and \eqref{sec3eq9} that
\begin{align*}
	\|T_1^m(s)\|_{\mbf L^2(\Omega;\mbb R)}\le K(n)\|X^m(\kappa_m(s))-X(s)\|_{\mbf L^4(\Omega;\dot{H}^\alpha)}(1+\|X^m(\kappa_m(s))\|_{\mbf L^4(\Omega;H)})\le K(n)m^{-\frac{1}{2}}.
\end{align*}
Due to the condition \eqref{G'}, the facts $\|BQ^{\frac{1}{2}}h_k\|\le \|B\|_{\mcal L_2^0}$ for $B\in\mcal L_2^0$,  $\|E_n(t)-Id_{H_n}\|_{\mcal L(H_n)}\le K(n)t$ for $t\ge 0$, the linear growth property of $G$, and Lemma \ref{Xmregularity}(i), it follows that
\begin{align*}
	\|T_2^m(s)\|_{\mbf L^2(\Omega;\mbb R)}&\le K\|E_n(s-\kappa_m(s))-Id_{H_n}\|_{\mcal L(H_n)}\|P_nG(X^m(\kappa_m(s)))Q^{\frac{1}{2}}h_l\|_{\mbf L^2(\Omega;H)} \\
	&\le K(n)m^{-1}(1+\|X^m(\kappa_m(s))\|_{\mbf L^2(\Omega;H)})\le K(n)m^{-1}.
\end{align*}
Further, combining $\|BQ^{\frac{1}{2}}h_k\|\le \|B\|_{\mcal L_2^0}$ for $B\in\mcal L_2^0$, \eqref{GLip}, \eqref{G'}, and \eqref{sec3eq9} gives
\begin{align*}
	\|T_3^m(s)\|_{\mbf L^2(\Omega;\mbb R)}\le K\|G(X^m(\kappa_m(s)))-G(X(s))\|_{\mbf L^2(\Omega;\mcal L_2^0)}\le Km^{-\frac{1}{2}\big(1-\max(\alpha-\sigma,0)\big)}.
\end{align*}
According to the previous estimates for $T_i^m$, $i=1,2,3$, we prove the claim \eqref{claim}. Based on \eqref{claim}, 
\begin{align*}
	\lim_{m\to\infty}\mbf E\int_0^t|C^m_{k,i,l}(s)C^m_{k,j,l}(s)-C_{k,i,l}(s)C_{k,j,l}(s)|\ud s=0,\quad\forall~t\in[0,T].
\end{align*}
Then we can apply \cite[Proposition 4.2]{HJWY24} to conclude $\lim\limits_{m\to\infty}B_3^m(t)=\frac{T}{2}\sum_{k,l=1}^n\int_0^tC_{k,i,l}(s)C_{k,j,l}(s)\ud s$ in $\mbf L^1(\Omega;\mbb R)$ for any $t\in[0,T]$, which along with $\|B_i^m(t)\|_{\mbf L^2(\Omega;\mbb R)}\le K(n)m^{-\frac{1}{2}}$ for $i=1,2$ yields
\begin{align}\label{converge2}
\lim_{m\to\infty}\LL V^{m,i},V^{m,j}\RR_t=\frac{T}{2}\sum_{k,l=1}^n\int_0^tC_{k,i,l}(s)C_{k,j,l}(s)\ud s~\text{in}~\mbf L^1(\Omega;\mbb R)\quad\forall~i,j=1,\ldots,n,~t\in[0,T].
\end{align}

{\it Step 3. Stable convergence in law of $V^m$ and $\widetilde{V}^m$.}
According to \eqref{converge1} and \eqref{converge2} obtained in former steps, we use \cite[Theorem 4-1]{Jacod1997} to show that $V^m\overset{stably}{\Longrightarrow}V$ in $\mbf C([0,T];\mbb R^n)$, where $V$ is a $(\beta_1,\ldots,\beta_n)$-bias conditional Gaussian martingale on some extension of $(\Omega,\mcal F,\mbf P)$ (still denoted by $(\Omega,\mcal F,\mbf P)$) and satisfies
\begin{align}
	\LL V^i,\beta_j\RR_t=0,~\LL V^i,V^j\RR_t=\frac{T}{2}\sum_{k,l=1}^n\int_0^tC_{k,i,l}(s)C_{k,j,l}(s)\ud s,\quad i,j=1,2\ldots,n,~t\in[0,T]. \label{Vvariation}
\end{align} 
By the martingale representation theorem (cf. \cite[Proposition 1-4]{Jacod1997}), $V^i$ can be represented as
\begin{align*}
	V^i(t)=\sum_{l=1}^n\int_0^tu^{i,l}(s)\ud\beta_l(s)+\sum_{l=1}^p\int_0^tv^{i,l}(s)\ud \tilde{\beta}_l(s),\quad i=1,\ldots,n,~t\in[0,T],
\end{align*}
where $(\tilde{\beta}_1,\ldots,\tilde{\beta}_p)$ is a $p$-dimensional standard Brownian motion for some $p\in\mbb N^+$ and is independent of $(\beta_1,\ldots,\beta_n)$, and $u^{i,l}$ and $v^{i,p}$ are stochastically integrable processes to be determined. First, we have $u^{i,l}=0$ for $i,l=1,\ldots,n$ due to $\LL V^i,\beta_j\RR_t=0$. Further,  in order to give $v^{i,l}$, we take $p=n^2$ and rewrite $v^{i,l}$ and $\{\tilde{\beta}_l\}$, $l=1,\ldots,n^2$, as $v^{i,k,l}$ and $\{\tilde{\beta}_{k,l}\}$, $k,l=1,\ldots,n$, i.e.,
\begin{align*}
	V^i(t)=\sum_{k=1}^n\sum_{l=1}^n\int_0^t v^{i,k,l}(s)\ud \tilde{\beta}_{k,l}(s).
\end{align*}
Then, it follows from \eqref{Vvariation} that
\begin{align*}
	\LL V^i,V^j\RR_t=\sum_{k=1}^n\sum_{l=1}^n\int_0^tv^{i,k,l}(s)v^{j,k,l}(s)\ud s=\frac{T}{2}\sum_{k=1}^n\sum_{l=1}^n\int_0^t C_{k,i,l}(s)C_{k,j,l}(s)\ud s. 
\end{align*}
Thus, we have $v^{i,k,l}(s)=\sqrt{\frac{T}{2}}C_{k,i,l}(s)$,  $k,i,l=1,\ldots,n$, and further obtain
\begin{align*}
	V^i(t)=\sqrt{\frac{T}{2}}\sum_{k=1}^n\sum_{l=1}^n\int_0^t\big\LL P_n\mcal DG(X(s))\big(P_nG(X(s))Q^{\frac{1}{2}}h_l\big)Q^{\frac{1}{2}}h_k,e_i\big\RR\ud \tilde{\beta}_{k,l}(s).
\end{align*}
Define the operator $\Gamma:\mbf C([0,T];\mbb R^n)\to\mbf C([0,T];H_n)$ by
\begin{align*}
	\Gamma(f)(t)=\sum_{i=1}^nf_i(t)e_i,\quad\forall~f=(f_1,\ldots,f_n)^\top\in  \mbf C([0,T];\mbb R^n).
\end{align*}
It is not hard to see that $\Gamma$ is a continuous mapping and $\widetilde{V}^m=\Gamma(V^m)$. Since a continuous mapping can preserve the stable convergence in law of random variables, which can be verified directly by the definition of stable convergence in law,  we have $\widetilde{V}^m\overset{satbly}{\Longrightarrow}\Gamma(V)$ in $\mbf C([0,T];H_n)$ with
\begin{align*}
&\;\Gamma(V)(t)=\sum_{i=1}^nV^i(t)e_i\\
=&\;\sqrt{\frac{T}{2}}\sum_{k=1}^n\sum_{l=1}^n\int_0^t P_n\mcal DG(X(s))\big(P_nG(X(s))Q^{\frac{1}{2}}h_l\big)Q^{\frac{1}{2}}h_k\ud \tilde{\beta}_{k,l}(s),\quad t\in[0,T].	
\end{align*} 
By using $\widetilde{W}^n_l(t)=\sum_{k=1}^nQ^{\frac{1}{2}}h_k\tilde{\beta}_{k,l}(t)$, $t\in[0,T]$, $l=1,\ldots,n$, we have that $\Gamma(V)=\widetilde{V}$ and finally complete the proof.
\end{proof}

Based on Lemma \ref{VtildemCon}, we can establish the convergence in distribution of $\widetilde{U}^{m,n}$ as $m\to\infty$.
 
\begin{lem}\label{Utildemn-mtoinfty}
	Let Assumptions \ref{assum1} and \ref{assum2} hold. Then for any fixed $n\in\mbb N^+$, $\widetilde{U}^{m,n}\overset{d}{\Longrightarrow}\widetilde{U}^{\infty,n}$ in $\mbf C([0,T];H_n)$ as $m\to\infty$, where $\widetilde{U}^{\infty,n}$ satisfies
\begin{align*}
\widetilde{U}^{\infty,n}(t)=&\;\int_0^t\big(A_n\widetilde{U}^{\infty,n}(s)+P_n\mcal DF(X(s))\widetilde{U}^{\infty,n}(s)\big)\ud s+\int_0^tP_n\mcal DG(X(s))\widetilde{U}^{\infty,n}(s)\ud W^n(s) \notag\\
&\; -\sqrt{\frac{T}{2}}\sum_{l=1}^n\int_0^tP_n\mcal DG(X(s))\big(P_nG(X(s))Q^{\frac{1}{2}}h_l\big)\ud \widetilde{W}_l^n(s),\quad t\in[0,T]. %\label{Utilderinfty-n}
\end{align*}
\end{lem}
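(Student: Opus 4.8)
The plan is to recognize both $\widetilde U^{m,n}$ and $\widetilde U^{\infty,n}$ as solutions of one and the same linear It\^o equation on the finite-dimensional space $H_n$, driven by the fixed $Q_n$-Wiener process $W^n$ with coefficient process $X$, and forced additively by $-\widetilde V^m$, respectively $-\widetilde V$; the only ingredient that varies with $m$ is the additive forcing. Lemma \ref{VtildemCon} already supplies $\widetilde V^m\overset{stably}{\Longrightarrow}\widetilde V$, and since the driving noise $W^n$ and the coefficient process $X$ are fixed and $\mcal F$-measurable, stable convergence upgrades to the joint convergence $(\widetilde V^m,W^n,X)\overset{d}{\Longrightarrow}(\widetilde V,W^n,X)$ in the relevant product path space, with $\widetilde V$ carried on the extension and conditionally independent of $(W^n,X)$. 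The whole argument then reduces to showing that the (linear) solution map sending the forcing to the solution is continuous enough to commute with this convergence.

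To make the solution map explicit I would use variation of constants. Introduce the $\mcal L(H_n)$-valued stochastic flow $\Phi^n$ solving the homogeneous matrix equation $\ud\Phi^n(t)=\big(A_n+P_n\mcal DF(X(t))\big)\Phi^n(t)\ud t+P_n\mcal DG(X(t))\Phi^n(t)\ud W^n(t)$ with $\Phi^n(0)=Id_{H_n}$. Because $H_n$ is finite-dimensional and the coefficients are bounded by \eqref{F'} and \eqref{G'}, $\Phi^n$ and its inverse exist and have moments of all orders. Applying It\^o's formula to $\Phi^n(\cdot)^{-1}\widetilde U^{m,n}(\cdot)$ (recall $\widetilde U^{m,n}(0)=0$) yields the representation
\begin{align*}
\widetilde U^{m,n}(t)=-\Phi^n(t)\int_0^t\Phi^n(s)^{-1}\ud\widetilde V^m(s)+\Phi^n(t)\,C^m(t),
\end{align*}
where $C^m$ collects the It\^o covariation between $\Phi^n(\cdot)^{-1}$ (driven by $W^n$) and $\widetilde V^m$ (driven by the same $\beta_k$). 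The identical computation applied to the limit equation gives $\widetilde U^{\infty,n}(t)=-\Phi^n(t)\int_0^t\Phi^n(s)^{-1}\ud\widetilde V(s)$, now with no covariation term, since $\widetilde V$ is driven by the Brownian motions $\tilde\beta_{k,l}$ that are independent of $W^n$ and hence of $\Phi^n$.

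Two convergences then close the proof. First, the correction $C^m$ vanishes: its integrand carries the factor $m^{\frac12}O^{m,n}(s)$ paired with a bounded adapted process, and the very martingale-increment estimate that produced $\mbf E(\LL V^{m,i},\beta_j\RR_t)^2\to0$ in Step 1 of Lemma \ref{VtildemCon} shows $\mbf E\|C^m(t)\|^2\to0$. Second, since $\Phi^n(\cdot)^{-1}$ is a fixed ($m$-independent), continuous, adapted integrand and $\widetilde V^m\overset{stably}{\Longrightarrow}\widetilde V$ with uniformly controlled brackets (the convergence of $\LL V^{m,i},V^{m,j}\RR_t$ in Lemma \ref{VtildemCon} furnishes the uniform integrability / UCV needed), the stochastic integrals converge, $\int_0^\cdot\Phi^n(s)^{-1}\ud\widetilde V^m(s)\overset{stably}{\Longrightarrow}\int_0^\cdot\Phi^n(s)^{-1}\ud\widetilde V(s)$. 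Composing with the continuous map $\xi\mapsto\Phi^n(\cdot)\xi$ and invoking the continuous mapping theorem gives $\widetilde U^{m,n}\overset{d}{\Longrightarrow}\widetilde U^{\infty,n}$ in $\mbf C([0,T];H_n)$, where the uniform bounds \eqref{sec3eq4} provide the tightness that promotes finite-dimensional convergence to functional convergence.

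The main obstacle is precisely the simultaneous passage to the limit inside the stochastic integral and the vanishing of the covariation $C^m$. At finite $m$ the forcing $\widetilde V^m$ is a second-order (iterated) functional of the very noise $W^n$ that drives the multiplicative part of the equation, so $\widetilde V^m$ is far from independent of $W^n$, whereas the limit $\widetilde V$ is independent of $W^n$; what reconciles them is that the first-order correlation, the cross-bracket $\LL V^{m,i},\beta_j\RR_t$, vanishes in the limit, and stable convergence packages this into asymptotic independence. One must therefore use this vanishing carefully twice---to annihilate the It\^o correction $C^m$ and to justify convergence of the stochastic integral against an integrand whose driving noise becomes asymptotically independent of the integrator---rather than treat the solution map as naively continuous in the driving semimartingale. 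An essentially equivalent route is to verify the uniformly-controlled-variations hypothesis for the pair $(W^n,\widetilde V^m)$ and apply a Kurtz--Protter type stability theorem for SDEs directly; the delicate point is the same.
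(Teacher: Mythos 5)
Your proposal is correct in strategy but takes a genuinely different route from the paper. The paper proves this lemma by invoking the uniform approximation theorem (Theorem \ref{uniform approximation}) a second time, following \cite[Theorem 2.3]{Jin25}: it introduces an Euler-type time discretization $Z^{m,M}$ of the linear equation for $\widetilde U^{m,n}$ (freezing the coefficients and the solution at the nodes $\kappa_M(s)$ while keeping the forcing $-\widetilde V^m$ intact), and then verifies Condition (A1) ($Z^{m,M}\to Z^m$ in $\mbf L^2$, uniformly in $m$, as $M\to\infty$), Condition (A2) (for fixed $M$, the discretized solution is an explicit continuous functional of finitely many increments of $(X,W^n,\widetilde V^m)$, so the stable convergence $\widetilde V^m\overset{stably}{\Longrightarrow}\widetilde V$ of Lemma \ref{VtildemCon} passes through), and Condition (A3) ($Z^{\infty,M}\to\widetilde U^{\infty,n}$ as $M\to\infty$). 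You instead solve the equation in closed form by variation of constants with the stochastic flow $\Phi^n$, reducing the lemma to (i) vanishing of the It\^o covariation correction $C^m$ and (ii) a Kurtz--Protter/UCV limit theorem for $\int_0^\cdot(\Phi^n)^{-1}\ud\widetilde V^m$ combined with the continuous mapping theorem. Both routes confront the same difficulty---$\widetilde V^m$ is built from the very noise $W^n$ driving the equation, while the limit $\widetilde V$ is independent of $W^n$---and both resolve it through the vanishing cross-bracket $\LL V^{m,i},\beta_j\RR_t\to0$; your route makes this mechanism more explicit, at the price of importing the UCV machinery, whereas the paper's route (freezing coefficients so that the solution map becomes a genuinely continuous functional) stays entirely within its own uniform approximation framework.

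One step of your plan needs more care than you indicate: the vanishing of $C^m$ is \emph{not} literally the estimate from Step 1 of Lemma \ref{VtildemCon}. Writing it out,
\begin{align*}
C^m(t)=m^{\frac12}\int_0^t\big(\Phi^n(s)\big)^{-1}\sum_{k=1}^n P_n\Big[\Big(\mcal DG(X(s))\big[P_n\mcal DG(X^m(\kappa_m(s)))O^{m,n}(s)Q^{\frac12}h_k\big]\Big)Q^{\frac12}h_k\Big]\ud s,
\end{align*}
the factors $\big(\Phi^n(s)\big)^{-1}$ and $\mcal DG(X(s))$ are evaluated at time $s$, not at $\kappa_m(s)$. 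In Step 1 of Lemma \ref{VtildemCon} every coefficient in the bracket integrand is frozen at $\kappa_m(s)$ by construction of the scheme, so the integrand is exactly conditionally centered given $\mcal F_{\kappa_m(s)}$ and the uncorrelated-increments computation applies; here it does not apply verbatim. You must first replace $\big(\Phi^n(s)\big)^{-1}$ by $\big(\Phi^n(\kappa_m(s))\big)^{-1}$ and $X(s)$ by $X(\kappa_m(s))$, noting that the resulting errors are $O(m^{-\frac12})$ (temporal regularity of $X$ and of the flow, plus the Lipschitz continuity of $\mcal DG$ furnished by \eqref{G''}) against the $O(1)$ factor $m^{\frac12}O^{m,n}(s)$, and only then run the martingale-increment cancellation on the frozen main term; Doob's inequality then upgrades the pointwise statement to $\sup_{t\in[0,T]}\|C^m(t)\|\to0$ in probability, which is what the functional convergence requires. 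This freezing step is routine given the regularity estimates already available (bounded coefficients via \eqref{F'} and \eqref{G'}, finite dimensionality of $H_n$), so it is a gap of rigor rather than of substance; with it filled, your argument is a valid alternative proof.
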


\begin{proof}
This proof follows the same procedure as the one used in \cite[Theorem 2.3]{Jin25}. For convenience, we drop the index $n$ in $\widetilde{U}^{m,n}$ and denote $Z^m:=\widetilde{U}^{m,n}$, i.e.,
\begin{align*}
Z^m(t)=&\;\int_0^t\big(A_nZ^m(s)+P_n\mcal DF(X(s))Z^m(s)\big)\ud s+\int_0^tP_n\mcal DG(X(s))Z^m(s)\ud W^n(s) -\widetilde{V}^m(t).
\end{align*}
Let $Z^{m,M}=\{Z^{m,M}(t),t\in[0,T]\}$ be the solution of 
\begin{align*}
Z^{m,M}(t)=&\;\int_0^t\big(A_nZ^{m,M}(\kappa_M(s))+P_n\mcal DF(X(\kappa_M(s)))Z^{m,M}(\kappa_M(s))\big)\ud s\\
&\;+\int_0^tP_n\mcal DG(X(\kappa_M(s)))Z^{m,M}(\kappa_M(s))\ud W^n(s) -\widetilde{V}^m(t).
\end{align*}

Next, we show that $Z^{m,M}$ and $Z^m$ satisfy Conditions (A1)--(A3) of Theorem \ref{uniform approximation}.

Firstly, it can be shown that 
\begin{align*}
\sup_{t\in[0,T]}\sup_{m\ge 1}\mbf E\|Z^m(t)\|^4<\infty, ~ \sup_{m\ge 1}\mbf E\|Z^m(t)-Z^m(s)\|^2 \le K|t-s|,\quad t,s\in[0,T]. 
\end{align*}
Then based on a standard computation (cf. \cite[Theorem 2.3]{Jin25}), we have
$$\lim\limits_{M\to\infty}\sup\limits_{m\ge 1}\mbf E\|Z^{m,M}-Z^{m}\|^2_{\mbf C([0,T];H_n)}=0,$$
which implies Condition (A1) of  Theorem \ref{uniform approximation}. 

Secondly, following the argument of the proof of \cite[Theorem 2.3]{Jin25}, one can use $\widetilde{V}^m\overset{stably}{\Longrightarrow}\widetilde{V}$ to get for any $M\in\mbb N^+$, $Z^{m,M}\overset{stably}{\Longrightarrow}Z^{\infty,M}$ in $\mbf C([0,T];H_n)$ with $Z^{\infty,M}$ satisfying
\begin{align*}
	Z^{\infty,M}(t)=&\;\int_0^t\big(A_nZ^{\infty,M}(\kappa_M(s))+P_n\mcal DF(X(\kappa_M(s)))Z^{\infty,M}(\kappa_M(s))\big)\ud s\\
	&\;+\int_0^tP_n\mcal DG(X(\kappa_M(s)))Z^{\infty,M}(\kappa_M(s))\ud W^n(s) -\widetilde{V}(t).
\end{align*}
This verifies Condition (A2) of Theorem \ref{uniform approximation}.

Finally, one can prove that for any given $n\in\mbb N^+$, it holds
\begin{align*}
	\lim_{M\to\infty}\mbf E \|Z^{\infty,M}-\widetilde{U}^{\infty,n}\|_{\mbf C([0,T];H_n)}^2=0,
\end{align*}
which implies Condition (A3) of Theorem \ref{uniform approximation} and finishes the proof as a result of Theorem \ref{uniform approximation}.
\end{proof}

\subsection{Convergence of $\widetilde{U}^{\infty,n}(t)$ as $n\to\infty$}
In this part, we present the convergence of $\widetilde{U}^{\infty,n}(t)$ as $n\to\infty$ in $\dot{H}^\eta$ for $\eta\in[0,1-\beta_1)$.  For this purpose, we will need the following properties on stochastic integrals. Let $Q_1$ and $Q_2$ be two nonnegative symmetric operators on $U$ with finite traces. Let $W_i$ be a $U$-valued $Q_i$-Wiener process such that 
$W_i(t)=\sum_{k=1}^{\infty}Q^\frac{1}{2}_ih_k\beta^{(i)}_{k}(t)$ for $i=1,2$, where
$\big\{\beta^{(i)}_k\big\}_{k\in\mbb N^+}$ is a family of independent standard Brownian motions defined on $\big(\Omega,\mcal F,\{\mcal F_t\}_{t\in[0,T]},\mbf P\big)$ and $\big\{\beta^{(1)}_k\big\}_{k\in\mbb N^+}$ is independent of $\big\{\beta^{(2)}_k\big\}_{k\in\mbb N^+}$. 
Denote the sets
\begin{align*}
	\mcal N^2_{W_i}(0,T;\mcal L_2^0):=&\Big\{\Phi:[0,T]\times \Omega\to\mcal L_2(Q_i^\frac{1}{2}(U),H)\,\big|\,\Phi ~\text{is predicable} \\
	&\qquad\qquad\text{and} \mbf~ \mbf E\int_{0}^{T}\big\|\Phi(s)Q_i^{\frac{1}{2}}\big\|^2_{\mcal L_2(U,H)}\ud s<+\infty\Big\},\quad i=1,2.
\end{align*}

\begin{pro}\label{independence}
Let $\Phi_i\in\mcal N^2_{W_i}(0,T;\mcal L_2^0)$ for $i=1,2$. Then for any $s,t\in[0,T]$, it holds
\begin{align*}
	\mbf E\Big\LL\int_0^t\Phi_1(r)\ud W_1(r),\int_0^s\Phi_2(r)\ud W_2(r) \Big\RR=0. 
\end{align*}
\end{pro}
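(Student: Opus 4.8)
The plan is to reduce the infinite-dimensional statement to a scalar computation that isolates the independence of the two driving Brownian families. First I would assume without loss of generality that $s\le t$ and exploit the martingale property of the stochastic integrals. Since $\Phi_1,\Phi_2$ are predictable and satisfy the standing integrability hypotheses, both $M_i(t):=\int_0^t\Phi_i(r)\ud W_i(r)$ are $H$-valued $\{\mcal F_t\}$-martingales. Because $M_2(s)$ is $\mcal F_s$-measurable and square integrable, conditioning on $\mcal F_s$ and pulling the (Bochner) conditional expectation through the bounded functional $\LL\,\cdot\,,M_2(s)\RR$ gives $\mbf E\LL M_1(t),M_2(s)\RR=\mbf E\LL \mbf E[M_1(t)\mid\mcal F_s],M_2(s)\RR=\mbf E\LL M_1(s),M_2(s)\RR$. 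Hence it suffices to treat the equal-time case with common horizon $u:=s\wedge t$.

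Next I would expand each integral in the eigenbasis $\{h_k\}$ of $U$ and a fixed orthonormal basis $\{e_i\}$ of $H$, writing $\LL M_1(u),M_2(u)\RR=\sum_{i,k,j}\big(\int_0^u\phi^i_k\ud\beta^{(1)}_k\big)\big(\int_0^u\psi^i_j\ud\beta^{(2)}_j\big)$, where $\phi^i_k(r):=\LL\Phi_1(r)Q_1^{\frac12}h_k,e_i\RR$ and $\psi^i_j(r):=\LL\Phi_2(r)Q_2^{\frac12}h_j,e_i\RR$ are scalar predictable processes. The hypotheses $\Phi_i\in\mcal N^2_{W_i}(0,T;\mcal L_2^0)$, together with the It\^o isometry and the Cauchy--Schwarz inequality applied term by term, guarantee absolute summability, so that $\mbf E$ and the triple sum may be interchanged by Fubini's theorem.

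The core step is then the scalar identity $\mbf E\big[\big(\int_0^u\phi\ud\beta^{(1)}_k\big)\big(\int_0^u\psi\ud\beta^{(2)}_j\big)\big]=0$ for predictable, square-integrable $\phi,\psi$. Because the family $\{\beta^{(1)}_k\}_{k\in\mbb N^+}$ is independent of $\{\beta^{(2)}_j\}_{j\in\mbb N^+}$, the two scalar It\^o integrals are $\{\mcal F_t\}$-martingales whose quadratic covariation vanishes, since $[\beta^{(1)}_k,\beta^{(2)}_j]\equiv 0$; consequently their product is a mean-zero martingale. Equivalently, one may approximate $\phi$ and $\psi$ in $\mbf L^2$ by simple predictable processes and verify directly that every resulting cross term has zero expectation, using independence of the two families and the zero-mean property of Brownian increments. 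Summing over $i,k,j$ yields $\mbf E\LL M_1(u),M_2(u)\RR=0$, which finishes the proof.

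The main technical obstacle is the bookkeeping of the interchange of the infinite summations with the expectation: one must secure absolute convergence, which rests on the square-integrability encoded in $\Phi_i\in\mcal N^2_{W_i}(0,T;\mcal L_2^0)$ and a careful, term-by-term application of the It\^o isometry. The independence argument itself is routine once the martingale and quadratic-covariation framework is in place; the only point requiring care is that both integrals be martingales with respect to the \emph{common} filtration $\{\mcal F_t\}$ and that the conditional-expectation manipulation in the first step be legitimate, both of which follow from the standing assumption that $W_1,W_2$ and the integrands $\Phi_1,\Phi_2$ are all adapted to $\{\mcal F_t\}$.
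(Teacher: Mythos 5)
Your overall strategy is sound and genuinely different from the paper's. You reduce to a common time by conditioning on $\mcal F_s$, expand in bases, and then use the fact that It\^o integrals against independent $\{\mcal F_t\}$-Brownian motions have vanishing quadratic covariation, so their product is a mean-zero martingale; both the conditioning step and the scalar covariation step are correct. The paper instead quotes \cite[Proposition 5]{CHJS23} to obtain the two-time scalar correlation identity $\mbf E\big[\LL\int_0^t\Phi_1(r)\ud W_1(r),a\RR\LL\int_0^s\Phi_2(r)\ud W_2(r),b\RR\big]=0$ for all $a,b\in H$ directly, so it never needs to reduce to equal times, and it only has to interchange $\mbf E$ with a \emph{single} sum over the $H$-basis $\{e_k\}$, which it justifies by $|xy|\le\frac12(x^2+y^2)$ together with the It\^o isometry. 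Your route is self-contained (no external proposition), at the cost of managing a triple sum.

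That is where the genuine gap lies: your claimed absolute summability of the triple sum is false in general, so Fubini cannot be invoked as stated. Term-by-term Cauchy--Schwarz gives $\mbf E\big|\big(\int_0^u\phi^i_k\ud\beta^{(1)}_k\big)\big(\int_0^u\psi^i_j\ud\beta^{(2)}_j\big)\big|\le a^i_k b^i_j$ with $a^i_k:=\big(\mbf E\int_0^u|\phi^i_k|^2\ud r\big)^{1/2}$, but the hypothesis $\Phi_1\in\mcal N^2_{W_1}(0,T;\mcal L_2^0)$ only yields $\sum_{i,k}(a^i_k)^2<\infty$, and square-summability in $k$ does not give summability. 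Concretely, take $\Phi_1,\Phi_2$ deterministic and constant in time with $\Phi_iQ_i^{\frac12}h_k=k^{-1}e_1$; then $\Phi_i\in\mcal N^2_{W_i}(0,T;\mcal L_2^0)$, yet $\sum_{k,j}\mbf E\big|\big(k^{-1}\beta^{(1)}_k(u)\big)\big(j^{-1}\beta^{(2)}_j(u)\big)\big|=\tfrac{2u}{\pi}\big(\sum_k k^{-1}\big)^2=\infty$, so the double sum over the $U$-bases is not absolutely summable. The repair is standard and preserves your structure: the partial sums $\sum_{k\le K}\int_0^u\Phi_1(r)Q_1^{\frac12}h_k\ud\beta^{(1)}_k(r)$ converge to $\int_0^u\Phi_1(r)\ud W_1(r)$ in $\mbf L^2(\Omega;H)$ (likewise for $\Phi_2$), hence products of partial sums converge in $\mbf L^1(\Omega;\mbb R)$; since each finite partial sum of cross terms has zero expectation by your scalar step, so does the limit. (Equivalently, handle the sums over $k,j$ by this $\mbf L^2$-limit argument and the sum over the $H$-index $i$ by the $|xy|\le\frac12(x^2+y^2)$ bound, exactly as the paper does.) With that substitution your proof is complete.
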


\begin{proof}
Following the argument in \cite[Proposition 5]{CHJS23} by replacing  $U_0$ and $Q^\frac{1}{2}_i(U_0)$ by $H$ and $Q^\frac{1}{2}_i(U)$ with $i=1,2$, respectively, we have that the correlation operator between $\int_0^t\Phi_1(r)\ud W_1(r)$ and $\int_0^s\Phi_2(r)\ud W_2(r)$ is $0$, more precisely, for any $a,b\in H$, it holds
\begin{align}\label{sec3eq10}
	\mbf E\Big[\Big\LL\int_0^t\Phi_1(r)\ud W_1(r),a\Big\RR\Big\LL\int_0^s\Phi_2(r)\ud W_2(r),b \Big\RR\Big]=0.
\end{align}

The conclusion immediately comes as a result of \eqref{sec3eq10} by noting
\begin{align*}
	\mbf E\Big\LL\int_0^t\Phi_1(r)\ud W_1(r),\int_0^s\Phi_2(r)\ud W_2(r) \Big\RR=\mbf E\sum_{k=1}^{\infty}\Big\LL\int_0^t\Phi_1(r)\ud W_1(r),e_k\Big\RR\Big\LL\int_0^s\Phi_2(r)\ud W_2(r),e_k \Big\RR=0,
\end{align*}
where the operations $\mbf E[\cdot]$ and $\sum_{k=1}^\infty$ can be exchanged due to the fact
\begin{align*}
	&\;\mbf E\sum_{k=1}^{\infty}\Big|\Big\LL\int_0^t\Phi_1(r)\ud W_1(r),e_k\Big\RR\Big\LL\int_0^s\Phi_2(r)\ud W_2(r),e_k \Big\RR\Big| \\
	\le &\;\frac{1}{2}\mbf E\sum_{k=1}^{\infty}\Big\LL\int_0^t\Phi_1(r)\ud W_1(r),e_k\Big\RR^2+\frac{1}{2}\mbf E\sum_{k=1}^{\infty} \Big\LL\int_0^s\Phi_2(r)\ud W_2(r),e_k \Big\RR^2 \\
	=&\;\frac{1}{2}\mbf E\Big\|\int_0^t\Phi_1(r)\ud W_1(r)\Big\|^2+\frac{1}{2}\mbf E\Big\|\int_0^s\Phi_2(r)\ud W_2(r)\Big\|^2<\infty
\end{align*} 
based on the It\^o isometry.
\end{proof}

Note that, by variation of constants formula, $\widetilde{U}^{\infty,n}$ solves the following equation
\begin{align}
\widetilde{U}^{\infty,n}(t)=&\,\int_0^tE_n(t-s)P_n\mcal DF(X(s))\widetilde{U}^{\infty,n}(s)\ud s+\int_0^tE_n(t-s)P_n\mcal DG(X(s))\widetilde{U}^{\infty,n}(s)\ud W^n(s) \notag\\
&\,-\sqrt{\frac{T}{2}}\sum_{l=1}^n\int_0^tE_n(t-s)P_n\mcal DG(X(s))\big(P_nG(X(s))Q^{\frac{1}{2}}h_l\big)\ud \widetilde{W}_l^n(s),\quad t\in[0,T]. \label{Utildeinfty-n}
\end{align} 
In order to identify the limit of $\widetilde{U}^{\infty,n}$, one needs to consider the limit of $\widetilde{W}_l^n$. We hence extend the family $\{\tilde{\beta}_{k,l}\}_{k,l=1}^n$ of standard Brownian motions to $\{\tilde{\beta}_{k,l}\}_{k,l=1}^\infty$, which is also a family of independent standard Brownian motions and is independent of $\{\beta_k\}_{k=1}^\infty$. 
Further, we define
\begin{align*}%\label{Wtildel}
	\widetilde{W}_l:=\sum_{k=1}^\infty Q^{\frac{1}{2}}h_k\tilde{\beta}_{k,l},\quad l\in\mbb N^+,
\end{align*}
which are independent $U$-valued $Q$-Wiener processes and are all independent of $W$.
With the above preparation, one observes that $\widetilde{U}^{\infty,n}$ converges formally to the solution $U$ of the following equation 
\begin{align}\label{U}
	U(t)=&\;\int_0^tE(t-s)\mcal DF(X(s))U(s)\ud s+\int_0^tE(t-s)\mcal DG(X(s))U(s)\ud W(s)\notag\\
	&-\sqrt{\frac{T}{2}}\sum_{l=1}^\infty \int_0^tE(t-s)\mcal DG(X(s))\big(G(X(s))Q^{\frac{1}{2}}h_l\big)\ud \widetilde{W}_l(s),\quad t\in[0,T].
\end{align}

The following lemma gives the well-posedness of \eqref{U}. 

\begin{lem}\label{Uwellposed}
	Let Assumptions \ref{assum1}--\ref{assum3} hold. Then the equation \eqref{U} admits a unique solution satisfying 
\[
\sup\limits_{t\in[0,T]}\mbf E\|U(t)\|_{\gamma}^2\le K(T,\gamma)
\]
	 for any $\gamma\in[0,1)$. 
\end{lem}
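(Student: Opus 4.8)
The plan is to recast \eqref{U} as a fixed-point problem for the mild-solution map and to treat its inhomogeneous term separately, since the infinite superposition of independent $Q$-Wiener integrals is the genuinely new feature here. Set
\[
\Xi(t):=-\sqrt{\frac{T}{2}}\sum_{l=1}^\infty\int_0^tE(t-s)\mcal DG(X(s))\big(G(X(s))Q^{\frac12}h_l\big)\ud\widetilde{W}_l(s),
\]
and, for a predictable $\dot H^\gamma$-valued process $Y$, define
\[
(\Phi Y)(t):=\int_0^tE(t-s)\mcal DF(X(s))Y(s)\ud s+\int_0^tE(t-s)\mcal DG(X(s))Y(s)\ud W(s)+\Xi(t).
\]
I would work in the Banach space $\mcal B_\gamma$ of predictable processes with norm $\|Y\|_{\mcal B_\gamma}^2:=\sup_{t\in[0,T]}\mbf E\|Y(t)\|_\gamma^2$, so that a fixed point of $\Phi$ in $\mcal B_\gamma$ is precisely a solution of \eqref{U} obeying the desired bound; existence and uniqueness then reduce to a contraction argument, with the two preliminary tasks being the $\mcal B_\gamma$-membership of $\Xi$ and the control of $\Phi Y_1-\Phi Y_2$.

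First I would establish $\Xi\in\mcal B_\gamma$ for every $\gamma\in[0,1)$. Since the $\widetilde{W}_l$ are mutually independent, Proposition \ref{independence} (applied after commuting $(-A)^{\gamma/2}$ through each stochastic integral) annihilates all cross terms, so the It\^o isometry gives
\[
\mbf E\|\Xi(t)\|_\gamma^2=\frac{T}{2}\sum_{l=1}^\infty\mbf E\int_0^t\big\|(-A)^{\frac{\gamma}{2}}E(t-s)\mcal DG(X(s))\big(G(X(s))Q^{\frac12}h_l\big)\big\|_{\mcal L_2^0}^2\ud s.
\]
Bounding $(-A)^{\gamma/2}E(t-s)$ in $\mcal L(H)$ by \eqref{semigroup1} together with property (2) of Hilbert--Schmidt operators, then using \eqref{G'}, and finally summing via $\sum_{l}\|G(X(s))Q^{1/2}h_l\|^2=\|G(X(s))\|_{\mcal L_2^0}^2$ and the linear growth of $G$ with the regularity \eqref{Xspatial}, each summand is dominated by $K(t-s)^{-\gamma}\mbf E\|G(X(s))\|_{\mcal L_2^0}^2\le K(t-s)^{-\gamma}$. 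Because $\int_0^t(t-s)^{-\gamma}\ud s<\infty$ exactly when $\gamma<1$, this yields $\sup_{t\in[0,T]}\mbf E\|\Xi(t)\|_\gamma^2\le K(T,\gamma)$, and the same estimate shows the partial sums converge in $\mbf L^2(\Omega;\dot H^\gamma)$, so that $\Xi$ is well defined.

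Finally, for $Y_1,Y_2\in\mcal B_\gamma$ the forcing cancels in $\Phi Y_1-\Phi Y_2$, and the It\^o isometry with \eqref{semigroup1}, \eqref{G'} (and \eqref{norm} to pass from the $H$-norm delivered by \eqref{G'} to the $\dot H^\gamma$-norm) bounds the stochastic contribution to $\mbf E\|(\Phi Y_1-\Phi Y_2)(t)\|_\gamma^2$ by $K\int_0^t(t-s)^{-\gamma}\mbf E\|Y_1(s)-Y_2(s)\|_\gamma^2\ud s$, while \eqref{F'} handles the drift with the milder kernel $(t-s)^{-\gamma/2}$. Since $(t-s)^{-\gamma}$ is integrable but singular, a single application of $\Phi$ need not contract; I would instead iterate, showing that $\Phi^N$ satisfies a bound whose kernel behaves like $K^N t^{N(1-\gamma)}/\Gamma\big(N(1-\gamma)+1\big)$ and is therefore a contraction for $N$ large, which delivers existence and uniqueness by the Banach fixed-point theorem. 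Applying the same singular estimate to $U=\Phi U$ and inserting the forcing bound gives $\mbf E\|U(t)\|_\gamma^2\le K\int_0^t(t-s)^{-\gamma}\mbf E\|U(s)\|_\gamma^2\ud s+K(T,\gamma)$, whence the generalized (Henry-type) Gronwall inequality produces the claimed a priori bound. The main obstacle is exactly this singular kernel: the multiplicative, infinite-dimensional noise forces the diffusion estimates to the borderline weight $(t-s)^{-\gamma}$, so classical Gronwall and contraction arguments must be replaced by their fractional counterparts, and the infinite forcing series must be made sense of through the independence of the $\widetilde{W}_l$ rather than by a finite-dimensional calculation.
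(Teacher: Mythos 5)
Your proof is correct, and its skeleton matches the paper's: both treat the infinite forcing series separately, kill the cross terms with Proposition \ref{independence}, apply the It\^o isometry termwise with \eqref{semigroup1} and \eqref{G'}, and then settle existence, uniqueness, and the a priori bound by a fixed-point argument with a singular-kernel Gronwall estimate (which the paper compresses into ``a standard argument utilizing the contraction mapping theorem''). The one genuine divergence is in how the series $\sum_{l}H_l(t)$ is shown to converge. The paper inserts $Q^{-\beta_2/2}$ and invokes \eqref{assum3.2}, writing $\|G(X(s))Q^{\frac12}h_l\|^2=q_l^{\beta_2}\|G(X(s))Q^{-\frac{\beta_2}{2}}Q^{\frac12}h_l\|^2$, so that the tail over $l\ge k_1$ is bounded by $K\big(\sup_{l\ge k_1}q_l\big)^{\beta_2}$; this quantitative decay is exactly what gets reused later (e.g., for $J_{3,1}^n$ in the proof of Lemma \ref{UtildeinftyntoU}). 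You instead sum the Hilbert--Schmidt identity $\sum_{l}\|G(X(s))Q^{\frac12}h_l\|^2=\|G(X(s))\|_{\mcal L_2^0}^2$ directly, so that $\sum_{l}\mbf E\|H_l(t)\|_\gamma^2$ is a convergent series of nonnegative numbers and its tails vanish automatically, giving the Cauchy property without any rate. Your route is more elementary and in fact shows that Assumption \ref{assum3} is not needed for this particular lemma --- only Assumptions \ref{assum1}--\ref{assum2} enter your estimates (the linear growth of $G$ from \eqref{GLip}, \eqref{Xspatial}, \eqref{F'}, \eqref{G'}, and \eqref{norm}) --- whereas the paper's heavier estimate buys the explicit $\big(\sup_{l\ge k_1}q_l\big)^{\beta_2}$ tail bound it needs elsewhere. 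Your spelled-out contraction step (iterating $\Phi^N$ to defeat the integrable-but-singular kernel $(t-s)^{-\gamma}$, $\gamma<1$, then a Henry-type Gronwall inequality for the bound $\sup_{t\in[0,T]}\mbf E\|U(t)\|_\gamma^2\le K(T,\gamma)$) is a correct filling-in of what the paper leaves implicit.
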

\begin{proof}
For any $l\in\mbb N^+$, we denote for simplicity $H_l(t):=\int_0^tE(t-s)\mcal DG(X(s))\big(G(X(s))Q^{\frac{1}{2}}h_l\big)\ud \widetilde{W}_l(s)$ for $t\in[0,T]$ and $\Phi_{l}^{\gamma,t}(s):=(-A)^{\frac{\gamma}{2}}E(t-s)\mcal DG(X(s))\big(G(X(s))Q^{\frac{1}{2}}h_{l}\big)$ for $s\in[0,t]$ and fixed $\gamma\in[0,1)$. 

We first verify the convergence of the series of $\sum_{l=1}^\infty H_l(t)$ in $\mbf L^2(\Omega;\dot{H}^\gamma)$. 
The It\^o isometry, \eqref{semigroup1}, \eqref{G'}, $\|BQ^{\frac{1}{2}}h_l\|\le \|B\|_{\mcal L_2^0}$ for $B\in\mcal L_2^0$, and \eqref{Xspatial} lead to
\begin{align}
	\mbf E\|H_l(t)\|_\gamma^2&=\mbf E\int_0^t\big\|\Phi_{l}^{\gamma,t}(s)\big\|_{\mcal L_2^0}^2\ud s 	\le \int_0^t(t-s)^{\gamma}\mbf E\|G(X(s))Q^{\frac{1}{2}}h_l\|^2\ud s\notag\\
	&\le \int_0^t(t-s)^{\gamma}(1+\mbf E\|X(s)\|^2)\ud s \le K(T).\label{sec3eq11}
\end{align}
For any $k_1,k_2\in\mbb N^+$ with $k_2\ge k_1$, it holds
\begin{align*}
	\mbf E\Big\|\sum_{l=k_1}^{k_2}H_l(t)\Big\|_\gamma^2=&\;\sum_{l=k_1}^{k_2}\mbf E\Big\|\int_0^t\Phi_{l}^{\gamma,t}(s)\ud \widetilde W_l(s)\Big\|^2
	+\sum_{\substack{l_1\neq l_2 \\k_1\le l_1,l_2\le k_2}}\mbf E\Big\LL \int_0^t\Phi^{\gamma,t}_{l_1}(s)\ud\widetilde{W}_{l_1}(s),\int_0^t\Phi^{\gamma,t}_{l_2}(s)\ud\widetilde{W}_{l_2}(s)\Big\RR\\
	=&\;\sum_{l=k_1}^{k_2}\mbf E\Big\|\int_0^t\Phi_{l}^{\gamma,t}(s)\ud \widetilde W_l(s)\Big\|^2,
\end{align*} 
where in the last step we used the fact $\Phi_l^{\gamma,t}\in\mcal N^2_{\widetilde{W}_l}(0,t;\mcal L_2^0)$ according to \eqref{sec3eq11} and Proposition \ref{independence}.
Then it follows from the It\^o isometry, \eqref{semigroup1}, \eqref{G'}, \eqref{assum3.2}, and \eqref{Xspatial} that
\begin{align}
\mbf E\Big\|\sum_{l=k_1}^{k_2}H_l(t)\Big\|_\gamma^2
\le&\; K\sum_{l=k_1}^{k_2}\mbf E\int_0^t(t-s)^{\gamma}\|G(X(s))Q^{\frac{1}{2}}h_l\|^2\ud s \notag\\
\le &\;K\sum_{l=k_1}^{k_2}\int_0^t(t-s)^{\gamma}\mbf E\|G(X(s))Q^{-\frac{\beta_2}{2}}Q^{\frac{1}{2}}h_l\|^2q_l^{\beta_2}\ud s \notag\\
\le &\; K\big(\sup_{l\ge k_1}q_l\big)^{\beta_2}\int_0^t(t-s)^{\gamma}\mbf E\|G(X(s))Q^{-\frac{\beta_2}{2}}\|_{\mcal L_2^0}^2\ud s \notag\\
\le &\; K\big(\sup_{l\ge k_1}q_l\big)^{\beta_2}\int_0^t(t-s)^{\gamma}(1+\mbf E\|X(s)\|^2)\ud s \notag\\
\le&\; K(T,\gamma)\big(\sup_{l\ge k_1}q_l\big)^{\beta_2},\quad\forall~t\in[0,T]. \label{sec3eq12}
\end{align}
Noting that $\lim_{k_1\to\infty}\big(\sup_{l\ge k_1}q_l\big)^{\beta_2}=0$ since $\tr(Q)=\sum_{i=1}^\infty q_i<\infty$, we have  that $\sum_{l=1}^\infty H_l(t)$ is a Cauchy sequence in $\mbf L^2(\Omega;\dot{H}^\gamma)$, and thus converges in $\mbf L^2(\Omega;\dot{H}^\gamma)$. Taking $k_1=1$ and passing to the limit $k_2\to\infty$ in \eqref{sec3eq12}, we obtain 
\begin{align*}
	\sup_{t\in[0,T]}\mbf E\Big\|\sum_{l=1}^\infty H_l(t)\Big\|_\gamma^2\le K(T,\gamma)\big(\sup_{l\ge 1}q_l\big)^{\beta_2}.
\end{align*}

The proof is then completed by proving that \eqref{U} admits a unique solution based on a standard argument utilizing the contraction mapping theorem. 
\end{proof}

Now we are in position to show that $\widetilde{U}^{\infty,n}(t)$ converges to $U(t)$.

\begin{lem}\label{UtildeinftyntoU}
	Let Assumptions \ref{assum1}--\ref{assum3} hold with $\beta_1\in(0,1)$. Then for any $t\in[0,T]$ and $\eta\in[0,1-\beta_1)$, it holds
	\[
	\lim\limits_{n\to\infty}\mbf E\|\widetilde{U}^{\infty,n}(t)-U(t)\|_\eta^2=0,
	\]
	where $U$ is given by \eqref{U}.
\end{lem}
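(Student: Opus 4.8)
The plan is to mirror the argument of Lemma~\ref{Utildemn-Utildem}: write both $\widetilde{U}^{\infty,n}$ and $U$ in their mild forms \eqref{Utildeinfty-n} and \eqref{U}, subtract them, and organize the difference into Gronwall-type terms (those still carrying the factor $\widetilde{U}^{\infty,n}-U$) plus a finite collection of error terms that vanish as $n\to\infty$. Throughout I would use the identity $E_n(t)P_n=E(t)P_n$, the projection estimate \eqref{Pn-Id}, the semigroup bounds \eqref{semigroup1}--\eqref{semigroup2}, the It\^o isometry, and the uniform regularity $\sup_{t\in[0,T]}\mbf E\|U(t)\|_\gamma^2+\sup_{n\ge1}\sup_{t\in[0,T]}\mbf E\|\widetilde{U}^{\infty,n}(t)\|_\gamma^2<\infty$ for $\gamma\in[0,1)$, which follows from Lemma~\ref{Uwellposed} and \eqref{sec3eq4} together with Lemma~\ref{Utildemn-mtoinfty}.

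First I would treat the drift and the two $\mcal DG$-noise pieces against $W$. Using $E_nP_n=EP_n$, the drift difference splits into a Gronwall term $\int_0^tE(t-s)P_n\mcal DF(X(s))(\widetilde{U}^{\infty,n}(s)-U(s))\ud s$, controlled by \eqref{semigroup1} and \eqref{F'}, and a projection error $\int_0^tE(t-s)(P_n-Id_H)\mcal DF(X(s))U(s)\ud s$ of order $\lambda_{n+1}^{-1/2}$ by \eqref{Pn-Id} (exactly as $S_{1,1}^{m,n}$). Writing $W=W^n+W^{Q-Q_n}$, the diffusion difference splits analogously into a Gronwall term, a projection error $\int_0^tE(t-s)(P_n-Id_H)\mcal DG(X(s))U(s)\ud W^n(s)$ of order $\lambda_{n+1}^{-(1-\eta)/2}$ (as $S_{2,1}^{m,n}$), and a truncation error $\int_0^tE(t-s)\mcal DG(X(s))U(s)\ud W^{Q-Q_n}(s)$ handled exactly as $S_{2,3}^{m,n}$ via \eqref{assum3.1} and bounded by $(\sup_{k\ge n+1}q_k)^{\beta_2}$; here the restriction $\eta<1-\beta_1$ guarantees the integrability of $(t-s)^{-(\eta+\beta_1)}$.

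The extra-noise term I expect to be the main obstacle. Subtracting the two series, I would first peel off the tail $\sum_{l=n+1}^\infty\int_0^tE(t-s)\mcal DG(X(s))\big(G(X(s))Q^{1/2}h_l\big)\ud\widetilde W_l(s)$, whose $\dot H^\eta$-norm is $O\big((\sup_{l\ge n+1}q_l)^{\beta_2/2}\big)$ by precisely estimate \eqref{sec3eq12} of Lemma~\ref{Uwellposed} with $\gamma=\eta$. For the surviving indices $l\le n$ I would telescope through the three discrepancies: $E_nP_n$ versus $E$ (again $E_nP_n=EP_n$, hence a $(P_n-Id_H)$ error), $P_nG$ versus $G$ inside the argument (another $(P_n-Id_H)$ error), and $\widetilde W_l^n$ versus $\widetilde W_l$, whose difference $\sum_{k\ge n+1}Q^{1/2}h_k\tilde\beta_{k,l}$ is a $(Q-Q_n)$-Wiener process. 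The crucial point is that Proposition~\ref{independence} forces all cross terms between distinct $\widetilde W_l$ to vanish, so the It\^o isometry applies summand-by-summand; the $(Q-Q_n)$-pieces are then controlled by \eqref{assum3.1}--\eqref{assum3.2} together with $(-A)^{(\eta+\beta_1)/2}E(t-s)$, integrable precisely because $\eta<1-\beta_1$. Summing over $l\le n$ and invoking $\tr(Q)<\infty$, all these contributions are $o(1)$ as $n\to\infty$.

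Collecting the estimates yields
\[
\mbf E\|\widetilde{U}^{\infty,n}(t)-U(t)\|_\eta^2\le K\int_0^t(t-s)^{-\eta}\mbf E\|\widetilde{U}^{\infty,n}(s)-U(s)\|_\eta^2\ud s+\varepsilon_n,
\]
with $\varepsilon_n\to0$, and the singular Gronwall inequality (as in Lemma~\ref{Utildemn-Utildem}) closes the argument. The genuine difficulty is the bookkeeping for the infinite extra-noise series: one must simultaneously control the tail in $l$, the two projection errors, and the Wiener truncation for each $l\le n$, and it is essential that Proposition~\ref{independence} decouples the noises so that no cross terms survive and that the regularity constraint $\eta<1-\beta_1$ supplies enough smoothing to absorb the $\beta_1$-loss in \eqref{assum3.1}.
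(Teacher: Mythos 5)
Your proposal is correct and follows essentially the same route as the paper: the same mild-form subtraction into drift, $W$-noise, and extra-noise-series parts, the same sub-splitting into Gronwall terms, $(P_n-Id_H)$ projection errors, and $(Q-Q_n)$ Wiener-truncation errors handled via Proposition \ref{independence}, the It\^o isometry, and \eqref{assum3.1}--\eqref{assum3.2}, closed by the singular Gronwall inequality. The only differences are immaterial choices of smoothing exponents in the projection-error bounds (e.g.\ $\lambda_{n+1}^{-1/2}$ versus the paper's $\lambda_{n+1}^{-\beta_1/2}$), which do not affect the conclusion.
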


\begin{proof}
By \eqref{Utildeinfty-n} and \eqref{U}, we write $U(t)-\widetilde{U}^{\infty,n}(t)=J_1^n(t)+J_2^n(t)-\sqrt{\frac{T}{2}}J_3^n(t)$, where
\begin{align*}
J_1^n(t):=&\;\int_0^tE(t-s)\mcal DF(X(s))U(s)\ud s-\int_0^tE_n(t-s)P_n\mcal DF(X(s))\widetilde{U}^{\infty,n}(s)\ud s,\\
J_2^n(t):=&\;\int_0^tE(t-s)\mcal DG(X(s))U(s)\ud W(s)-\int_0^tE_n(t-s)P_n\mcal DG(X(s))\widetilde{U}^{\infty,n}(s)\ud W^n(s), \\
J_3^n(t):=&\;\sum_{l=1}^\infty\int_0^tE(t-s)\mcal DG(X(s))\big(G(X(s))Q^{\frac{1}{2}}h_l\big)\ud \widetilde{W}_l(s) \\
&\;-\sum_{l=1}^n\int_0^tE_n(t-s)P_n\mcal DG(X(s))\big(P_nG(X(s))Q^{\frac{1}{2}}h_l\big)\ud \widetilde{W}_l^n(s).
\end{align*}

We decompose $J_1^n(t)$ into $J_1^n(t)=J_{1,1}^n(t)+J_{1,2}^n(t)$ with
\begin{align*}
	J_{1,1}^n(t)&:=\int_0^tE(t-s)\big(Id_H-P_n\big)\mcal DF(X(s))U(s)\ud s,\\
	J_{1,2}^n(t)&:=\int_0^tE(t-s)P_n\mcal DF(X(s))\big(U(s)-\widetilde{U}^{\infty,n}(s)\big)\ud s.
\end{align*}
Applying \eqref{semigroup1}, \eqref{Pn-Id}, \eqref{F'}, and Lemma \ref{Uwellposed} yields
\begin{align*}
	\|J_{1,1}^n(t)\|_{\mbf L^2(\Omega;\dot{H}^\eta)}\le&\; \int_0^t\|(-A)^{\frac{\eta+\beta_1}{2}}E(t-s)\|_{\mcal L(H)}\|(-A)^{-\frac{\beta_1}{2}}\big(Id_H-P_n\big)\|_{\mcal L(H)}\|U(s)\|_{\mbf L^2(\Omega;\dot{H}^\eta)}\ud s \\
	\le&\; K\lambda_{n+1}^{-\frac{\beta_1}{2}}.
\end{align*}
Moreover, by \eqref{norm}, \eqref{semigroup1}, and \eqref{F'}, it holds that
\begin{align*}
	\|J_{1,2}^n(t)\|_{\mbf L^2(\Omega;\dot{H}^\eta)}\le K \int_0^t(t-s)^{-\frac{\eta}{2}}\|U(s)-\widetilde{U}^{\infty,n}(s)\|_{\mbf L^2(\Omega;\dot{H}^\eta)}\ud s.
\end{align*}
Then applying the H\"older inequality yields
\begin{align}\label{J1n}
	\mbf E\|J_1^n(t)\|_\eta^2\le K\lambda_{n+1}^{-\beta_1}+K\int_0^t(t-s)^{-\eta}\mbf E\|U(s)-\widetilde{U}^{\infty,n}(s)\|_{\eta}^2\ud s.
\end{align}

Further, we decompose $J_2^n(t)$ into $J_2^n(t)=\sum_{i=1}^{3}J_{2,i}^n(t)$ with
\begin{align*}
	J_{2,1}^n(t)&:=\int_0^tE(t-s)\big(Id_H-P_n\big)\mcal DG(X(s))U(s)\ud W(s), \\
	J_{2,2}^n(t)&:=\int_0^tE(t-s)P_n\mcal DG(X(s))U(s)\ud (W(s)-W^n(s)),\\
	J_{2,3}^n(t)&:=\int_0^tE(t-s)P_n\mcal DG(X(s))\big(U(s)-\widetilde{U}^{\infty,n}(s)\big)\ud W^n(s).
\end{align*}
Using It\^o isometry, \eqref{semigroup1}, \eqref{Pn-Id}, and Lemma \ref{Uwellposed}, we obtain
\begin{align*}
	\mbf E\|J_{2,1}^n(t)\|_\eta^2 \le K\lambda_{n+1}^{-\beta_1}.
\end{align*}
In addition, it can be shown that
\begin{align*}
\mbf E\|J_{2,3}(t)\|_\eta^2\le K\int_0^t(t-s)^{-\eta}\mbf E\|U(s)-\widetilde{U}^{\infty,n}(s)\|^2\ud s.	
\end{align*}
For $J_{2,2}^n$, recall that $P_{n,U}$ is the projection operator from $U$ onto $\text{span}\{h_1,\ldots,h_n\}$. Then the It\^o isometry, \eqref{semigroup1}, \eqref{assum3.1}, and Lemma \ref{Uwellposed} yield
\begin{align*}
\mbf E\|J_{2,2}(t)\|_\eta^2
=&\;\mbf E\int_0^t\|(-A)^{\frac{\eta}{2}}E(t-s)P_n\mcal DG(X(s))U(s)Q^{\frac{1}{2}}\big(Id_U-P_{n,U}\big)\|_{\mbf L^2(U,H)}^2\ud s\\
\le &\;\mbf E\int_0^t\|(-A)^{\frac{\eta+\beta_1}{2}}E(t-s)P_n\|_{\mcal L(H)}^2\|(-A)^{-\frac{\beta_1}{2}}\mcal DG(X(s))U(s)Q^{-\frac{\beta_2}{2}}Q^{\frac{1}{2}}\|^2_{\mcal L_2(U,H)}\\
&\qquad\cdot\|Q^{\frac{\beta_2}{2}}\big(Id_U-P_{n,U}\big)\|_{\mcal L(U)}^2\ud s\\
\le&\; K\big(\sup_{k\ge n+1}q_k\big)^{\beta_2}\int_0^t(t-s)^{-(\eta+\beta_1)}\mbf E\|U(s)\|^2\ud s\\
\le&\; K\big(\sup_{k\ge n+1}q_k\big)^{\beta_2}.
\end{align*}
Accordingly, 
\begin{align}\label{J2n}%label necessary
	\mbf E\|J_2^n(t)\|_\eta^2\le K\Big(\lambda_{n+1}^{-\beta_1}+\big(\sup_{k\ge n+1}q_k\big)^{\beta_2}\Big)+K\int_0^t(t-s)^{-\eta}\mbf E\|U(s)-\widetilde{U}^{\infty,n}(s)\|_\eta^2\ud s.
\end{align}

We proceed to tackle $J_3^n$, which can be decomposed into $J_3^n(t)=\sum_{i=1}^{4}J^n_{3,i}(t)$ with
\begin{align*}
	J_{3,1}^n(t)&:=\sum_{l=n+1}^\infty\int_0^tE(t-s)\mcal DG(X(s))\big(G(X(s))Q^{\frac{1}{2}}h_l\big)\ud\widetilde{W}_l(s),\\
	J_{3,2}^n(t)&:=\sum_{l=1}^n\int_0^tE(t-s)\mcal DG(X(s))\big(G(X(s))Q^{\frac{1}{2}}h_l\big)\ud \big(\widetilde{W}_l(s)-\widetilde{W}_l^n(s)\big),\\	J_{3,3}^n(t)&=\sum_{l=1}^n\int_0^tE(t-s)\mcal DG(X(s))\big((Id_H-P_n)G(X(s))Q^{\frac{1}{2}}h_l\big)\ud \widetilde{W}_l^n(s),\\
	J_{3,4}^n(t)&:=\sum_{l=1}^n\int_0^tE(t-s)\big(Id_H-P_n\big)\mcal DG(X(s))\big(P_nG(X(s))Q^{\frac{1}{2}}h_l\big)\ud \widetilde{W}_l^n(s).
\end{align*}
It follows from Proposition \ref{independence}, the It\^o isometry, \eqref{semigroup1}, \eqref{Xspatial}, \eqref{G'}, and \eqref{assum3.2} that
\begin{align*}
	\mbf E\|J_{3,1}^n(t)\|_\eta^2&=\sum_{l=n+1}^\infty\mbf E\Big\|\int_0^t(-A)^{\frac{\eta}{2}}E(t-s)\mcal DG(X(s))\big(G(X(s))Q^{\frac{1}{2}}h_l\big)\ud\widetilde{W}_l(s)\Big\|^2  \\
	&\le K\sum_{l=n+1}^\infty\int_0^t(t-s)^{-\eta}\mbf E\|G(X(s))Q^{\frac{1}{2}}h_l\|^2\ud s\\
	&\le K\sum_{l=n+1}^\infty\int_0^t(t-s)^{-\eta}\mbf E\|G(X(s))Q^{-\frac{\beta_2}{2}}Q^{\frac{1}{2}}h_l\|^2q_l^{\beta_2}\ud s\\
	&\le K\big(\sup_{l\ge n+1}q_l\big)^{\beta_2}\int_0^t(t-s)^{-\eta}\mbf E\|G(X(s))Q^{-\frac{\beta_2}{2}}\|_{\mcal L_2^0}^2\ud s\\
	&\le K\big(\sup_{l\ge n+1}q_l\big)^{\beta_2}.
\end{align*}
Applying Proposition \ref{independence}, \eqref{semigroup1}, \eqref{assum3.1}, and $\|Q^{\gamma}(Id_U-P_{n,U})\|_{\mcal L(U)}= \big(\sup\limits_{k\ge n+1}q_k\big)^{\gamma}$ for $\gamma\ge 0$, we deduce
\begin{align*}
\mbf E\|J_{3,2}(t)\|_\eta^2
=&\;\sum_{l=1}^n\mbf E\int_0^t\|(-A)^{\frac{\eta+\beta_1}{2}}E(t-s)(-A)^{-\frac{\beta_1}{2}}\mcal DG(X(s))\big(G(X(s))Q^{\frac{1}{2}}h_l\big)Q^{-\frac{\beta_2}{2}}Q^{\frac{1}{2}}\\
&\qquad\qquad\quad\circ Q^{\frac{\beta_2}{2}}\big(Id_U-P_{n,U}\big)\|_{\mcal L_2(U,H)}^2\ud s \\
\le&\; K\big(\sup\limits_{k\ge n+1}q_k\big)^{\beta_2}\sum_{l=1}^n\mbf E\int_0^t(t-s)^{-(\eta+\beta_1)}\|(-A)^{-\frac{\beta_1}{2}}\mcal DG(X(s))\big(G(X(s))Q^{\frac{1}{2}}h_l\big)Q^{-\frac{\beta_2}{2}}\|_{\mcal L_2^0}^2\ud s \\
\le &\; K\big(\sup\limits_{k\ge n+1}q_k\big)^{\beta_2}\mbf E\int_0^t(t-s)^{-(\eta+\beta_1)}\sum_{l=1}^n\|G(X(s))Q^{\frac{1}{2}}h_l\|^2\ud s \\
\le &\;K\big(\sup\limits_{k\ge n+1}q_k\big)^{\beta_2}\int_0^t(t-s)^{-(\eta+\beta_1)}\mbf E\|G(X(s))\|_{\mcal L_2^0}^2\ud s \\
\le &\; K\big(\sup\limits_{k\ge n+1}q_k\big)^{\beta_2}.
\end{align*}
By Proposition \ref{independence}, the It\^o isometry, \eqref{semigroup1}, \eqref{Ggrow}, \eqref{G'}, and \eqref{Xspatial}, 
\begin{align*}
	\mbf E\|J_{3,3}^n(t)\|^2_\eta&=\sum_{l=1}^{n}\mbf E\int_0^t\|(-A)^{\frac{\eta}{2}}E(t-s)\mcal DG(X(s))\big((Id_H-P_n)G(X(s))Q^{\frac{1}{2}}h_l\big)\|_{\mcal L_2^0}^2\ud s \\
	&\le K\sum_{l=1}^{n}\mbf E\int_0^t(t-s)^{-\eta}\|(-A)^{-\frac{\sigma}{2}}(Id_H-P_n)\|_{\mcal L(H)}^2\|(-A)^{\frac{\sigma}{2}}G(X(s))Q^{\frac{1}{2}}h_l\|^2\ud s\\
	&\le K \lambda_{n+1}^{-\sigma}\int_0^t(t-s)^{-\eta}\mbf E\|(-A)^{\frac{\sigma}{2}}G(X(s))\|_{\mcal L_2^0}^2\ud s	\le K \lambda_{n+1}^{-\sigma}.
\end{align*}
For $J_{3,4}^n$, one can validate that
$\mbf E\|J_{3,4}^n(t)\|_\eta^2\le K\lambda_{n+1}^{-\beta_1}$. Based on the previous estimates for $J_{3,i}^n$ with $i=1,2,3,4$, we arrive at
\begin{align}\label{J3n}
	\mbf E\|J_3^n(t)\|_\eta^2\le K\Big(\big(\sup\limits_{k\ge n+1}q_k\big)^{\beta_2}+\lambda_{n+1}^{-\min(\sigma,\beta_1)}\Big).
\end{align}

Then \eqref{J1n}--\eqref{J3n} yield
\begin{align*}
\mbf E\|U(t)-\widetilde{U}^{\infty,n}(t)\|_\eta^2\le K\int_0^t(t-s)^{-\eta}\mbf E\|U(s)-\widetilde{U}^{\infty,n}(s)\|_\eta^2\ud s+K\Big(\big(\sup\limits_{k\ge n+1}q_k\big)^{\beta_2}+\lambda_{n+1}^{-\min(\sigma,\beta_1)}\Big),
\end{align*}
which finishes the proof as a result of the Gronwall inequality. 
\end{proof}

\subsection{Convergence in distribution of $U^m(t)$}
Based on the results obtained in previous subsections, we are now able to state our main result on the convergence in distribution of $U^m(t)$. 
\begin{theo}\label{maintheo}
	Let Assumptions \ref{assum1}--\ref{assum3} hold with $\sigma,\beta_1\in(0,1)$. Then for any $t\in[0,T]$ and $\eta\in\big[0,\min(\sigma,1-\beta_1)\big)$, $U^m(t)\overset{d}{\Longrightarrow}U(t)$ in $\dot{H}^\eta$ as $m\to\infty$, where $U$ is given by \eqref{U}.
\end{theo}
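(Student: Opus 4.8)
The plan is to assemble the five preparatory results of this section into the framework of the uniform approximation theorem (Theorem \ref{uniform approximation}), taking the underlying metric space to be the separable Hilbert space $\dot{H}^\eta$. First I would reduce the problem from $U^m(t)$ to the auxiliary process $\widetilde{U}^m(t)$: Lemma \ref{auxiliary} gives $\mbf E\|U^m(t)-\widetilde{U}^m(t)\|_\eta^2\to 0$, so $U^m(t)-\widetilde{U}^m(t)\to 0$ in probability in $\dot{H}^\eta$. By the converging-together (Slutsky) lemma, which is valid in separable metric spaces, it then suffices to prove $\widetilde{U}^m(t)\overset{d}{\Longrightarrow}U(t)$ in $\dot{H}^\eta$; the two sequences are forced to share the same limit in distribution.

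Next I would apply Theorem \ref{uniform approximation} with $\mcal X=\dot{H}^\eta$ under the identifications $Z^m=\widetilde{U}^m(t)$, $Z^{m,n}=\widetilde{U}^{m,n}(t)$, $Z^{\infty,n}=\widetilde{U}^{\infty,n}(t)$, and $Z^{\infty,\infty}=U(t)$. Condition (A1) follows from Lemma \ref{Utildemn-Utildem}: for any bounded Lipschitz $f:\dot{H}^\eta\to\mbb R$ one has $|\mbf Ef(\widetilde{U}^m(t))-\mbf Ef(\widetilde{U}^{m,n}(t))|\le \mathrm{Lip}(f)\,\big(\mbf E\|\widetilde{U}^m(t)-\widetilde{U}^{m,n}(t)\|_\eta^2\big)^{1/2}$, and taking the supremum over $m\ge 1$ and letting $n\to\infty$ gives the required uniform vanishing. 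Condition (A2) follows from Lemma \ref{Utildemn-mtoinfty}, which yields $\widetilde{U}^{m,n}\overset{d}{\Rightarrow}\widetilde{U}^{\infty,n}$ in $\mbf C([0,T];H_n)$; evaluating at the fixed time $t$ produces convergence in $H_n$, and since $H_n$ is finite-dimensional all its norms are equivalent, so this is equally convergence in $\dot{H}^\eta$. Condition (A3) follows from Lemma \ref{UtildeinftyntoU}, which gives $\mbf E\|\widetilde{U}^{\infty,n}(t)-U(t)\|_\eta^2\to 0$, hence convergence in probability and a fortiori in distribution in $\dot{H}^\eta$. Theorem \ref{uniform approximation} then delivers $\widetilde{U}^m(t)\overset{d}{\Rightarrow}U(t)$, and combining with the reduction above completes the argument.

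Since the technical labor has already been expended in the lemmas, the residual difficulty is essentially bookkeeping, with two points deserving care. The admissible range of $\eta$ must be dictated simultaneously by all invoked results: Lemma \ref{auxiliary} requires $\eta\in[0,\sigma)$, while Lemmas \ref{Utildemn-Utildem} and \ref{UtildeinftyntoU} require $\eta\in[0,1-\beta_1)$, and their intersection is precisely $\eta\in[0,\min(\sigma,1-\beta_1))$, matching the statement. The one genuine conceptual point is the transfer of convergence modes off the finite-dimensional subspaces $H_n$: Lemma \ref{Utildemn-mtoinfty} is phrased in the path space $\mbf C([0,T];H_n)$, so I would invoke the continuity of the evaluation map $g\mapsto g(t)$ together with the norm equivalence on $H_n$ to land in $\dot{H}^\eta$, thereby verifying all three conditions of Theorem \ref{uniform approximation} on the single metric space $\dot{H}^\eta$ and obtaining the conclusion $U^m(t)\overset{d}{\Longrightarrow}U(t)$.
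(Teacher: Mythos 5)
Your proposal is correct and follows essentially the same route as the paper: the uniform approximation theorem applied on $\dot{H}^\eta$ with the identifications $Z^m=\widetilde{U}^m(t)$, $Z^{m,n}=\widetilde{U}^{m,n}(t)$, $Z^{\infty,n}=\widetilde{U}^{\infty,n}(t)$, $Z^{\infty,\infty}=U(t)$, with (A1), (A2), (A3) supplied by Lemmas \ref{Utildemn-Utildem}, \ref{Utildemn-mtoinfty} (plus evaluation at $t$ and norm equivalence on $H_n$), and \ref{UtildeinftyntoU} respectively, and the transfer between $U^m(t)$ and $\widetilde{U}^m(t)$ handled by Lemma \ref{auxiliary} together with Slutzky's theorem. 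The only difference is cosmetic: you perform the Slutzky reduction at the start while the paper does it at the end, and you make explicit the Lipschitz estimate behind (A1) and the $\eta$-range bookkeeping, both of which the paper leaves implicit.
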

\begin{proof}
Fix $t\in[0,T]$ and $\eta\in[0,\min(\sigma,1-\beta_1))$. 

By Lemma \ref{Utildemn-Utildem}, Condition (A1) of Theorem \ref{uniform approximation} is satisfied by $\widetilde{U}^{m,n}(t)$ and $\widetilde{U}^m(t)$ with $\mcal X=\dot{H}^\eta$. 
Further, according to Lemma \ref{Utildemn-mtoinfty} and the continuous mapping theorem (a continuous mapping preserves the convergence in distribution of random variables), $\widetilde{U}^{m,n}(t)\overset{d}{\Longrightarrow}\widetilde{U}^{\infty,n}(t)$ in $H_n$ for fixed $n\in \mbb N^+$. Noting that the $\|\cdot\|$-norm and the $\|\cdot\|_\eta$-norm are equivalent in $H_n$, it also holds that $\widetilde{U}^{m,n}(t)\overset{d}{\Longrightarrow}\widetilde{U}^{\infty,n}(t)$ in $\dot{H}^\eta$ for fixed $n\in \mbb N^+$, which verifies Condition (A2) of Theorem \ref{uniform approximation}. 
In addition, Lemma \ref{UtildeinftyntoU} implies $\widetilde{U}^{\infty,n}(t)\overset{d}{\Longrightarrow}U(t)$ in $\dot{H}^\eta$, which verifies
Condition (A3) of Theorem \ref{uniform approximation}. 
We then conclude that $\widetilde{U}^m(t)\overset{d}{\Longrightarrow}U(t)$ in $\dot{H}^\eta$ as $m\to\infty$ based on Theorem \ref{uniform approximation}.

By Lemma \ref{auxiliary}, $\|\widetilde{U}^m(t)-U^m(t)\|_\eta$ converges to $0$ in probability, which, combined with  $\widetilde{U}^m(t)\overset{d}{\Longrightarrow}U(t)$ in $\dot{H}^\eta$ and Slutzky's theorem (cf. \cite[Theorem
13.18]{Klenke}), yields $U^m(t)\overset{d}{\Longrightarrow}U(t)$ in $\dot{H}^\eta$. 
\end{proof}

\section{Applications of the main result}\label{Sec4}
In this section,  we present several applications of the main result, i.e., Theorem \ref{maintheo}, including the asymptotic error distribution of  the exponential Euler method for general SODEs, the asymptotic error distribution of a fully discrete exponential Euler method for general SPDEs, and a concrete example of a stochastic heat equation to which the main result can be applied.

\subsection{Asymptotic error distribution of the exponential Euler method for SODEs}
We consider the finite-dimensional counterpart of \eqref{SPDE} and the corresponding exponential Euler method by setting $H=\mbb R^d$ and $U=\mbb R^m$. In addition, we set $A=L\in\mbb R^{d\times d}$ as a negative definite matrix, $Q=I_m\in\mbb R^{m\times m}$ as the identity matrix with the classical orthonormal eigenbasis $\{h_i\in\mbb R^m: \text{the $i$th element is 1}\}_{i\in\mbb N^+}$, $B=\{(B^1(t),B^2(t),\ldots,B^m(t))^\top,t\in[0,T]\}$ as an $m$-dimensional standard Brownian motion defined on $(\Omega,\mcal F,\mbf P)$, and assume that $F=f:\mbb R^d\to\mbb R^d$ and $G=g=(g_1,\ldots,g_m):\mbb R^d\to\mbb R^{d\times m}$ are globally Lipschitz continuous. 

In this SODE setting, we have $\dot{H}^\gamma=\mbb R^d$ for any $\gamma\in\mbb R$, and the equation \eqref{SPDE} reduces to the following $d$-dimensional SODE
\begin{align*}%\label{SODE}
	\begin{cases}
	\ud Y(t)=LY(t)\ud t+f(Y(t))\ud t+g(Y(t))\ud B(t),\quad t\in[0,T], \\
	Y(0)=Y_0\in\mbb R^d.
\end{cases}
\end{align*}
Also, the continuous numerical solution $Y^m$ of the exponential Euler method satisfies
\begin{align*}
Y^m(t)=e^{tL}Y_0+\int_0^te^{(t-\kappa_m(s))L}f(Y^m(\kappa_m(s)))\ud s+\int_0^te^{(t-\kappa_m(s))L}g(Y^m(\kappa_m(s)))\ud B(s),\quad t\in[0,T].
\end{align*}

As an immediate result of Theorem \ref{maintheo}, we can obtain the asymptotic error distribution of $Y^m$. 
\begin{cor}\label{cor-SODE}
Assume that $f$ and $g$ are twice continuously differentiable with bounded first and second order derivatives. Then for any $t\in[0,T]$, $m^{\frac{1}{2}}\big(Y^m(t)-Y(t)\big)\overset{d}{\Longrightarrow}M(t)$ with $M$ solving the following SODE
\begin{align*}
	M(t)=&\;\int_0^te^{(t-s)L}\mcal Df(Y(s))M(s)\ud s+\int_0^te^{(t-s)L}\mcal Dg(Y(s))M(s)\ud B(s) \\
	&-\sqrt{\frac{T}{2}}\sum_{j=1}^{m}\int_0^te^{(t-s)L}\mcal Dg(Y(s))g_j(Y(s))\ud \widetilde{B}_j(s),\quad t\in[0,T],
\end{align*}
where $\widetilde{B}_1,\ldots,\widetilde{B}_m$ are independent $m$-dimensional standard Brownian motions and independent of $B$. 
\end{cor}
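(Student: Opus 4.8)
The plan is to recognize Corollary \ref{cor-SODE} as the specialization of Theorem \ref{maintheo} to the finite-dimensional setting $H=\mbb R^d$, $U=\mbb R^m$, and to proceed in two stages: first verify that Assumptions \ref{assum1}--\ref{assum3} hold, then translate the abstract limit equation \eqref{U} into the stated SODE for $M$.

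For the verification stage, the decisive simplification is that in finite dimensions the fractional-power spaces all coincide: $\dot H^\gamma=\mbb R^d$ for every $\gamma\in\mbb R$ (and likewise $\mrm{Dom}(Q^r)=\mbb R^m$), with all the norms $\|\cdot\|_\gamma$ mutually equivalent. Consequently every spatial-regularity requirement degenerates into a statement about bounded linear maps on $\mbb R^d$. First I would fix any admissible parameters, say $\sigma,\beta_1\in(0,1)$, $\beta_2>0$, and $\alpha\in[0,\sigma+\tfrac12)$. Assumption \ref{assum1} then holds because $Y_0$ lies in $\dot H^{1+\sigma}=\mbb R^d$, $f=F$ and $g=G$ are globally Lipschitz by hypothesis, and the growth bound \eqref{Ggrow} reduces to the linear growth of $g$ together with the boundedness of $(-L)^{\sigma/2}$. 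Assumption \ref{assum2} follows directly from the assumed $C^2$ regularity of $f,g$ with bounded first and second derivatives, since \eqref{F'}--\eqref{G''} are just restatements of these bounds up to norm equivalence. Finally, because $Q=I_m$ forces $Q^{-\beta_2/2}=I_m$ and $(-L)^{-\beta_1/2}$ is a bounded matrix, Assumption \ref{assum3} is immediate from the boundedness of $\mcal Dg$ and the linear growth of $g$.

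Having checked the hypotheses, Theorem \ref{maintheo} yields $U^m(t)\overset{d}{\Longrightarrow}U(t)$ in $\dot H^\eta$ for $\eta\in[0,\min(\sigma,1-\beta_1))$; since $\dot H^\eta=\mbb R^d$ with an equivalent norm, this is exactly convergence in distribution in $\mbb R^d$, and $U^m(t)=m^{1/2}(Y^m(t)-Y(t))$. It remains to rewrite \eqref{U}. I would substitute $E(t-s)=e^{(t-s)L}$, $\mcal DF(X(s))=\mcal Df(Y(s))$, $\mcal DG(X(s))=\mcal Dg(Y(s))$, and $W=B$. Since $Q=I_m$, the orthonormal basis of $U_0$ is $\{h_l\}_{l=1}^m$ and $G(X(s))Q^{1/2}h_l=g(Y(s))h_l=g_l(Y(s))$, the $l$-th column of $g$; crucially the series $\sum_{l=1}^\infty$ truncates to the finite sum $\sum_{l=1}^m$ because $U$ is $m$-dimensional. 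Each auxiliary $Q$-Wiener process $\widetilde W_l=\sum_{k=1}^m h_k\tilde\beta_{k,l}$ is precisely the $m$-dimensional standard Brownian motion $\widetilde B_l$, independent of $B$ and of the others. Carrying out these substitutions turns \eqref{U} term by term into the displayed equation for $M$, with $M=U$.

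The main point to handle with care---rather than a genuine obstacle---is the bookkeeping in this last translation, especially confirming that the infinite family of independent driving noises in \eqref{U} collapses to exactly $m$ additional Brownian motions $\widetilde B_1,\ldots,\widetilde B_m$. This collapse is what recovers the classical finite-dimensional asymptotic-error result and illustrates that the genuinely infinite-dimensional feature emphasized in Theorem \ref{maintheo} is a phenomenon specific to the multiplicative SPDE. No new estimates are required, since the well-posedness of the limiting SODE for $M$ is a special case of Lemma \ref{Uwellposed}.
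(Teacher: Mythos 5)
Your proposal is correct and follows essentially the same route as the paper, which states Corollary \ref{cor-SODE} as an immediate consequence of Theorem \ref{maintheo} in the finite-dimensional setting: you verify Assumptions \ref{assum1}--\ref{assum3} (all of which trivialize since $\dot H^\gamma=\mbb R^d$ with equivalent norms and $Q=I_m$) and then translate \eqref{U} term by term, correctly observing that the infinite sum of auxiliary $Q$-Wiener processes collapses to the $m$ Brownian motions $\widetilde B_1,\ldots,\widetilde B_m$. Your write-up simply makes explicit the bookkeeping the paper leaves implicit.
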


\subsection{Asymptotic error distribution of a fully discrete exponential Euler method}
In this part, we study the  asymptotic error distribution of a fully discrete numerical method applied to \eqref{SPDE}, based on the temporal exponential Euler method and spatial finite element method. 

Let $(S_h)_{h\in(0,1]}$ be a sequence of finite-dimensional subspaces of $\dot{H}^1$ and $R_h:\dot{H}^1\to S_h$ the Ritz projector onto $S_h$ with respect to the inner product $\LL\cdot,\cdot\RR_1=\LL(-A)^{\frac{1}{2}}\cdot,(-A)^{\frac{1}{2}}\cdot\RR$ in $\dot{H}^1$, i.e.,
\begin{align*}
	\LL R_hx,y_h\RR_1=\LL x,y_h\RR_1,\quad\forall~x\in\dot{H}^1,~y_h\in S_h.
\end{align*} 
We introduce the following assumption on the operator $R_h$.
\begin{assum}\label{assum4}
For $s=1,2$ and $h\in(0,1]$, there is a constant $K>0$ independent of $h$ such that
	\begin{align*}
		\|R_hx-x\|\le Kh^s\|x\|_s,\quad\forall~x\in\dot{H}^s.
	\end{align*}
\end{assum}  
Let the operator $\tilde A_h: S_h\to S_h$ be the discrete version of $-A$. More precisely, for $x_h\in S_h$, $\tilde A_hx_h$ is defined as the unique element satisfying
\begin{align*}
\LL \tilde A_hx_h,y_h\RR=\LL x_h,y_h\RR_1,\quad\forall~y_h\in S_h. 
\end{align*}   
Then the operator $\tilde A_h$ is self-adjoint and positive definite on $S_h$ and $-\tilde A_h$ generates an analytic semigroup of contractions on $S_h$, denoted by $\{\tilde E_h(t):=e^{-\tilde A_ht}\}_{t\ge0}$. Additionally, let $\tilde P_h:\dot{H}^{-1}\to S_h$ be the generalized orthogonal projector onto $S_h$ defined by
\begin{align*}
	\LL \tilde P_hx,y_h\RR=\LL (-A)^{-1}x,y_h\RR_1,\quad\forall~x\in\dot{H}^{-1},~y_h\in S_h.
\end{align*}
One can show that, when restricted to $H$, $\tilde P_h$ 
coincides with the usual orthogonal projector onto $S_h$ with respect to the inner product $\LL\cdot,\cdot\RR$.
Under Assumption \ref{assum4}, one has the following error estimate for $\tilde{E}_h(t)\tilde P_h-E(t)$ (cf. \cite[Lemma 3.8]{Krusebook}).
\begin{pro}\label{pro3}
	Let Assumption \ref{assum4} hold and $0\le\nu\le\mu\le 2$. Then it holds
	\begin{align*}
	\|\big(\tilde{E}_h(t)\tilde P_h-E(t)\big)x\|\le K(\mu,\nu)h^\mu t^{-\frac{\mu-\nu}{2}}\|x\|_\nu,\quad\forall~x\in\dot{H}^\nu,~t>0,~h\in(0,1].
	\end{align*}
\end{pro}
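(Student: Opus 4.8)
The plan is to treat Proposition \ref{pro3} as the classical deterministic finite-element error estimate with nonsmooth data, combining the elliptic (Ritz) error with the parabolic smoothing of the two semigroups. Write $\mcal A:=-A$. First I would upgrade Assumption \ref{assum4} to all intermediate orders: since $\tilde P_h$ restricted to $H$ is the $\LL\cdot,\cdot\RR$-orthogonal projector onto $S_h$, the best-approximation property gives $\|(\tilde P_h-Id_H)x\|\le\|(R_h-Id_H)x\|$, and interpolating the cases $s=1,2$ yields $\|(R_h-Id_H)x\|+\|(\tilde P_h-Id_H)x\|\le Kh^\mu\|x\|_\mu$ for every $\mu\in[0,2]$; hence $\|(\tilde P_h-R_h)x\|\le Kh^\mu\|x\|_\mu$ on $\dot H^\mu$. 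I would also record the smoothing bound $\|E(s)x\|_\mu\le K s^{-(\mu-\nu)/2}\|x\|_\nu$ (a direct consequence of \eqref{semigroup1}), the uniform-in-$h$ discrete analyticity $\|\tilde E_h(\tau)\|_{\mcal L(S_h)}\le 1$ and $\|\tilde A_h\tilde E_h(\tau)\|_{\mcal L(S_h)}\le K\tau^{-1}$, and the commutation identity $\tilde A_hR_h=\tilde P_h\mcal A$, which follows at once from the defining relations of $R_h$, $\tilde A_h$, and $\tilde P_h$.

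The core of the argument is a telescoping (Duhamel) identity. Setting $G(s):=\tilde E_h(t-s)\tilde P_hE(s)$ for $s\in[0,t]$ and differentiating, the semigroup derivatives $\frac{\ud}{\ud s}\tilde E_h(t-s)=\tilde A_h\tilde E_h(t-s)$ and $E'(s)=-\mcal AE(s)$ together with $\tilde A_h\tilde P_h-\tilde P_h\mcal A=\tilde A_h(\tilde P_h-R_h)$ give $G'(s)=\tilde E_h(t-s)\tilde A_h(\tilde P_h-R_h)E(s)$. Integrating from $0$ to $t$ and adding the terminal projection defect then yields
\[
\big(\tilde E_h(t)\tilde P_h-E(t)\big)x=(\tilde P_h-Id_H)E(t)x-\int_0^t\tilde E_h(t-s)\tilde A_h(\tilde P_h-R_h)E(s)x\,\ud s.
\]
The first term is bounded by $Kh^\mu\|E(t)x\|_\mu\le Kh^\mu t^{-(\mu-\nu)/2}\|x\|_\nu$ by the elliptic and smoothing bounds, which is already of the asserted form.

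The essential difficulty is the integral term, because $\|\tilde A_h\tilde E_h(t-s)\|_{\mcal L(S_h)}\le K(t-s)^{-1}$ is not integrable near $s=t$. I would split $[0,t]=[0,t/2]\cup[t/2,t]$. On $[0,t/2]$ the singular factor is harmless, $(t-s)^{-1}\le 2/t$, so bounding the integrand by the elliptic estimate and $\|E(s)x\|_\mu\le Ks^{-(\mu-\nu)/2}\|x\|_\nu$ produces $Kt^{-1}h^\mu\int_0^{t/2}s^{-(\mu-\nu)/2}\ud s\le Kh^\mu t^{-(\mu-\nu)/2}\|x\|_\nu$, valid whenever $\mu-\nu<2$. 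On $[t/2,t]$, where $(t-s)^{-1}$ blows up but $E(s)x$ is smooth, I would integrate by parts in $s$ using $\tilde A_h\tilde E_h(t-s)=\frac{\ud}{\ud s}\tilde E_h(t-s)$; writing $g(s):=(\tilde P_h-R_h)E(s)x$ this moves the unbounded operator onto $g'(s)=-(\tilde P_h-R_h)\mcal AE(s)x$ and leaves boundary terms at $s=t/2$ and $s=t$. The boundary terms are controlled as before, while $\|g'(s)\|\le Kh^\mu\|\mcal AE(s)x\|_\mu=Kh^\mu\|E(s)x\|_{\mu+2}\le Kh^\mu s^{-(\mu+2-\nu)/2}\|x\|_\nu$, and since $s\ge t/2$ the integral $\int_{t/2}^t s^{-(\mu+2-\nu)/2}\ud s$ contributes $Kt^{-(\mu-\nu)/2}$, again giving the claimed bound.

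The \emph{main obstacle} is precisely this non-integrable $(t-s)^{-1}$ coming from the discrete generator: the point is to remove $\tilde A_h$ near the endpoint $s=t$ by integration by parts, exploiting the parabolic smoothing of $E(s)x$ there, rather than by brute-force bounds (which would force an inverse inequality and spoil the uniformity in $h$), and to keep every constant uniform in $h$ through the uniform discrete analyticity of $\{\tilde E_h(\tau)\}_{\tau\ge0}$. The borderline case $\mu-\nu=2$ (only $\mu=2$, $\nu=0$) escapes the $[0,t/2]$ estimate by a logarithm and must be treated separately, e.g.\ by a finer dyadic decomposition or a duality argument. An alternative, equivalent route is the Dunford--Taylor representation $E(t)=\frac1{2\pi i}\int_\Gamma e^{\lambda t}(\lambda+\mcal A)^{-1}\ud\lambda$ together with its discrete analogue for $\tilde E_h(t)\tilde P_h$, which reduces the claim to a resolvent (stationary elliptic) error estimate uniform along a sectorial contour $\Gamma$; I expect the time-domain argument above to be the most economical.
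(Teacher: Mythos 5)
First, note the paper itself offers no proof of Proposition \ref{pro3}: it is quoted from \cite[Lemma 3.8]{Krusebook}, so your attempt must stand on its own against the standard (Thom\'ee-type) arguments in that literature. Within that frame, most of your construction is correct: the commutation relation $\tilde A_hR_h=\tilde P_h(-A)$ on $\mathrm{Dom}(A)$, the uniform-in-$h$ analyticity bounds for $\tilde E_h$, the Duhamel identity
\begin{align*}
\big(\tilde E_h(t)\tilde P_h-E(t)\big)x=(\tilde P_h-Id_H)E(t)x-\int_0^t\tilde E_h(t-s)\tilde A_h(\tilde P_h-R_h)E(s)x\,\ud s,
\end{align*}
and the splitting of the integral at $t/2$ with integration by parts on $[t/2,t]$ all check out, and together they do yield the stated bound whenever $\mu-\nu<2$ (which, incidentally, is all the paper ever uses, since Lemma \ref{Xmh-Xm} invokes the proposition only with $\mu-\nu\le 1+\sigma<2$).

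The genuine gap is the corner case $(\mu,\nu)=(2,0)$, i.e., the nonsmooth-data estimate $\|(\tilde E_h(t)\tilde P_h-E(t))x\|\le Kh^2t^{-1}\|x\|$, which is part of the statement and is in fact its deepest case. On $[0,t/2]$ your integrand bound gives at best (using $\|(\tilde P_h-R_h)E(s)x\|\le K\min(hs^{-1/2},h^2s^{-1})\|x\|$) the estimate $Kh^2t^{-1}\log(t/h^2)\|x\|$, and the logarithm cannot be removed inside your framework: integrating by parts near $s=0$ produces the boundary term $(\tilde P_h-R_h)x$, which is not even defined for $x\in H$; and a ``finer dyadic decomposition'' fails as well, since the natural recursion $F_h(t)=\tilde E_h(t/2)\tilde P_hF_h(t/2)+F_h(t/2)E(t/2)$, with $F_h(t):=\tilde E_h(t)\tilde P_h-E(t)$, accumulates the singular contributions $Kh^2 2^{j}t^{-1}$ over the dyadic levels and diverges instead of closing. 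Removing the logarithm is precisely where the classical proofs switch to a genuinely different mechanism --- Thom\'ee's weighted-energy/duality argument in time, or the Dunford--Taylor contour representation combined with a uniform $O(h^2)$ resolvent error estimate --- and you only gesture at these; neither is a routine completion of what you wrote. A secondary blemish: your claim $\|(R_h-Id_H)x\|\le Kh^\mu\|x\|_\mu$ for \emph{all} $\mu\in[0,2]$ is unjustified for $\mu<1$, since $R_h$ is only defined on $\dot{H}^1$ and Assumption \ref{assum4} gives only $s=1,2$ (downward interpolation to $\mu=0$ works only for the orthogonal projector $\tilde P_h$). This one is harmless, because once the three corners $(\mu,\nu)=(0,0)$, $(2,2)$, $(2,0)$ are known, the whole triangle $0\le\nu\le\mu\le2$ follows by interpolation of the resulting operator norms --- but that reduction again isolates $(2,0)$ as the single missing, and essential, ingredient.
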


For the fully discrete method based on the temporal exponential Euler method and spatial finite element method, whose continuous numerical solution $X^m_h$ satisfies
\begin{align}
	X^m_h(t)=&\;\tilde{E}_h(t)\tilde{P}_hX_0+\int_0^t\tilde{E}_h(t-\kappa_m(s))\tilde{P}_hF(X^m_h(\kappa_m(s)))\ud s \notag\\
	&\;+\int_0^t\tilde{E}_h(t-\kappa_m(s))\tilde{P}_hG(X^m_h(\kappa_m(s)))\ud W(s),\quad t\in[0,T], \label{Xmh}
\end{align}
we next present its spatial strong convergence rate and asymptotic error distribution. 

\begin{lem}\label{Xmh-Xm}
	Let Assumptions \ref{assum1} and \ref{assum4} hold. Then for any $\epsilon\in(0,1)$, it holds for all $h\in(0,1]$ that
	\begin{align*}
		\sup_{t\in[0,T]}\|X^m_h(t)-X^m(t)\|_{\mbf L^p(\Omega;H)}\le K(\epsilon)h^{1+\sigma-\epsilon}. 
	\end{align*}
\end{lem}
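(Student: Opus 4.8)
The plan is to control the error process $e^m_h(t):=X^m_h(t)-X^m(t)$ by a Gronwall argument after splitting it into ``consistency'' terms (coming from the operator difference $\tilde E_h(\cdot)\tilde P_h-E(\cdot)$) and ``stability'' terms (coming from the differences fed into $F$ and $G$). Subtracting \eqref{Xmmild} from \eqref{Xmh} and inserting $E(\cdot)$ suitably, I would write
\begin{align*}
e^m_h(t)=&\;\big(\tilde E_h(t)\tilde P_h-E(t)\big)X_0\\
&+\int_0^t\big(\tilde E_h(t-\kappa_m(s))\tilde P_h-E(t-\kappa_m(s))\big)F(X^m(\kappa_m(s)))\ud s\\
&+\int_0^t\tilde E_h(t-\kappa_m(s))\tilde P_h\big(F(X^m_h(\kappa_m(s)))-F(X^m(\kappa_m(s)))\big)\ud s\\
&+\int_0^t\big(\tilde E_h(t-\kappa_m(s))\tilde P_h-E(t-\kappa_m(s))\big)G(X^m(\kappa_m(s)))\ud W(s)\\
&+\int_0^t\tilde E_h(t-\kappa_m(s))\tilde P_h\big(G(X^m_h(\kappa_m(s)))-G(X^m(\kappa_m(s)))\big)\ud W(s)
=:\sum_{i=0}^4 T_i^m(t).
\end{align*}
Beforehand I would record, using the contraction bound $\|\tilde E_h(r)\tilde P_h\|_{\mcal L(H)}\le 1$ together with \eqref{FLip}--\eqref{GLip} and the linear growth of $F,G$, a uniform-in-$h$ moment estimate $\sup_{h\in(0,1]}\sup_{t\in[0,T]}\|X^m_h(t)\|_{\mbf L^p(\Omega;H)}<\infty$, which guarantees that the stochastic integrals are well defined and that $\phi(t):=\|e^m_h(t)\|_{\mbf L^p(\Omega;H)}$ is finite.

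The consistency terms $T_0^m,T_1^m,T_3^m$ are governed purely by Proposition \ref{pro3}. For $T_0^m$ I would take $\mu=\nu=1+\sigma$, obtaining $\|T_0^m(t)\|_{\mbf L^p}\le Kh^{1+\sigma}\|X_0\|_{\mbf L^p(\Omega;\dot H^{1+\sigma})}$ (at $t=0$ one instead uses that the orthogonal projector satisfies $\|(\tilde P_h-Id_H)X_0\|\le\|R_hX_0-X_0\|\le Kh^{1+\sigma}\|X_0\|_{1+\sigma}$, after interpolating Assumption \ref{assum4} between $s=1$ and $s=2$). For $T_1^m$, since $F$ maps into $H$ with $\|F(X^m(\kappa_m(s)))\|\le K(1+\|X^m(\kappa_m(s))\|)$, I would apply Proposition \ref{pro3} with $\nu=0,\ \mu=1+\sigma$ and exploit $(t-\kappa_m(s))^{-(1+\sigma)/2}\le(t-s)^{-(1+\sigma)/2}$, whose time integral converges because $\sigma<1$; this gives $\|T_1^m(t)\|_{\mbf L^p}\le Kh^{1+\sigma}$. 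For the stochastic consistency term $T_3^m$ I would estimate the Hilbert--Schmidt norm columnwise: combining $\|(-A)^{\frac{\sigma}{2}}G(u)\|_{\mcal L_2^0}\le L_2(1+\|u\|_\sigma)$ from \eqref{Ggrow} with Proposition \ref{pro3} at $\nu=\sigma$ yields $\|(\tilde E_h(r)\tilde P_h-E(r))G(u)\|_{\mcal L_2^0}\le Kh^\mu r^{-(\mu-\sigma)/2}(1+\|u\|_\sigma)$. Together with the uniform $\dot H^\sigma$-bound from Lemma \ref{Xmregularity}(i) (via \eqref{norm}), the Burkholder--Davis--Gundy and Minkowski inequalities then reduce $\|T_3^m(t)\|_{\mbf L^p}^2$ to a constant times $h^{2\mu}\int_0^t(t-\kappa_m(s))^{-(\mu-\sigma)}\ud s$.

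Here lies the main obstacle and the source of the $\epsilon$-loss: the diffusion coefficient only carries $\dot H^\sigma$-regularity, so the natural choice $\mu=1+\sigma$ forces the exponent $\mu-\sigma=1$ and a logarithmically divergent integral $\int_0^t(t-\kappa_m(s))^{-1}\ud s$. I would therefore lower the spatial order to $\mu=1+\sigma-\epsilon$, for which $\mu-\sigma=1-\epsilon<1$ makes the integral finite while respecting the constraints $\sigma\le\mu\le2$ of Proposition \ref{pro3}; this gives $\|T_3^m(t)\|_{\mbf L^p}\le K(\epsilon)h^{1+\sigma-\epsilon}$, the dominant contribution. The stability terms $T_2^m,T_4^m$ are handled by \eqref{FLip}, \eqref{GLip}, $\|\tilde E_h\tilde P_h\|_{\mcal L(H)}\le1$, and (for $T_4^m$) the BDG and Minkowski inequalities, producing $\|T_2^m(t)\|_{\mbf L^p}\le K\int_0^t\phi(\kappa_m(s))\ud s$ and $\|T_4^m(t)\|_{\mbf L^p}^2\le K\int_0^t\phi(\kappa_m(s))^2\ud s$. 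Collecting the five bounds, squaring, and applying Cauchy--Schwarz to the $T_2^m$-contribution gives $\phi(t)^2\le K(\epsilon)h^{2(1+\sigma-\epsilon)}+K\int_0^t\Phi(s)\ud s$ with $\Phi(t):=\sup_{r\in[0,t]}\phi(r)^2$ (using $\kappa_m(s)\le s$ so that $\phi(\kappa_m(s))^2\le\Phi(s)$). Taking the supremum over $[0,t]$ and invoking the Gronwall inequality then yields $\Phi(t)\le K(T,\epsilon)h^{2(1+\sigma-\epsilon)}$, which is the claimed rate.
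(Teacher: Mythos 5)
Your proposal is correct and follows essentially the same route as the paper: the identical consistency/stability splitting (the paper merely groups your $T_1,T_2$ and $T_3,T_4$ into two terms before sub-splitting), the same applications of Proposition \ref{pro3} with $\mu=\nu=1+\sigma$ for the initial datum, $\mu=1+\sigma,\nu=0$ for the drift consistency, and crucially $\mu=1+\sigma-\epsilon,\nu=\sigma$ for the stochastic consistency term (the paper likewise pairs this with \eqref{Ggrow} and Lemma \ref{Xmregularity}(i), which is where the $\epsilon$-loss enters), followed by the same BDG/Lipschitz stability bounds and Gronwall closure. Your additional remarks (the uniform-in-$h$ moment bound for $X^m_h$ and the $t=0$ treatment of the projection error) are harmless refinements of details the paper leaves implicit.
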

\begin{proof}
By the expressions \eqref{Xmmild} and \eqref{Xmh}, we get $X^m_h(t)-X^m(t)=\sum_{i=1}^{3}M_i(t)$ for $t\in[0,T]$ with
\begin{align*}
M_1(t)&:=\big(\tilde{E}_h(t)\tilde P_h-E(t)\big)X_0, \\
M_2(t)&:=\int_0^t\big(\tilde{E}_h(t-\kappa_m(s))\tilde P_hF(X^m_h(\kappa_m(s)))-E(t-\kappa_m(s))F(X^m(\kappa_m(s)))\big)\ud s,\\
M_3(t)&:=\int_0^t\big(\tilde{E}_h(t-\kappa_m(s))\tilde P_hG(X^m_h(\kappa_m(s)))-E(t-\kappa_m(s))G(X^m(\kappa_m(s)))\big)\ud W(s).
\end{align*} 
	
Applying Proposition \ref{pro3} with $\mu=\nu=1+\sigma$ yields
\begin{align}
	\|M_1(t)\|_{\mbf L^p(\Omega;H)}\le Kh^{1+\sigma}\|X_0\|_{\mbf L^p(\Omega;\dot{H}^{1+\sigma})}.\label{M1}
\end{align}

Further, we decompose $M_2(t)$ into $M_{2}(t)=M_{2,1}(t)+M_{2,2}(t)$ with
\begin{align*}
	M_{2,1}(t)&:=\int_0^t\tilde E_h(t-\kappa_m(s))\tilde P_h\big(F(X^m_h(\kappa_m(s)))-F(X^m(\kappa_m(s)))\big)\ud s,\\
	M_{2,2}(t)&:=\int_0^t\Big(\tilde{E}_h(t-\kappa_m(s))\tilde P_h-E(t-\kappa_m(s))\Big)F(X^m(\kappa_m(s)))\ud s.
\end{align*}
It follows from the condition \eqref{FLip} and the contraction property of $\tilde{E}_h$ and $\tilde{P}_h$ that
\begin{align*}
	\|M_{2,1}(t)\|_{\mbf L^p(\Omega;H)}\le K\int_0^t\|X^m_h(\kappa_m(s))-X^m(\kappa_m(s))\|_{\mbf L^p(\Omega;H)}\ud s.
\end{align*}
Applying Proposition \ref{pro3} with $\mu=1+\sigma$ and $\nu=0$, and using the linear growth property of $F$ and Lemma \ref{Xmregularity}(i), we arrive at
\begin{align*}
	\|M_{2,2}(t)\|_{\mbf L^p(\Omega;H)}\le Kh^{1+\sigma}\int_0^t(t-\kappa_m(s))^{-\frac{1+\sigma}{2}}(1+\|X^m(\kappa_m(s))\|_{\mbf L^p(\Omega;H)})\ud s\le Kh^{1+\sigma}.
\end{align*}
This then leads to
\begin{align}\label{M2}%label necessary
	\|M_2(t)\|_{\mbf L^p(\Omega;H)}\le Kh^{1+\sigma}+K\int_0^t\sup_{r\in[0,s]}\|X^m_h(r)-X^m(r)\|_{\mbf L^p(\Omega;H)}\ud s.
\end{align} 

For $M_3$, we split it as $M_3(t)=M_{3,1}(t)+M_{3,2}(t)$ with
\begin{align*}
	M_{3,1}(t)&:=\int_0^t\tilde E_h(t-\kappa_m(s))\tilde P_h\big(G(X^m_h(\kappa_m(s)))-G(X^m(\kappa_m(s)))\big)\ud W(s),\\
	M_{3,2}(t)&:=\int_0^t\Big(\tilde{E}_h(t-\kappa_m(s))\tilde P_h-E(t-\kappa_m(s))\Big)G(X^m(\kappa_m(s)))\ud W(s).
\end{align*}
The Burkholder--Davis--Gundy (BDG) inequality, the contraction property of $\tilde{E}_h$ and $\tilde P_h$, and \eqref{GLip} yield
\begin{align*}
	\|M_{3,1}(t)\|_{\mbf L^p(\Omega;H)}&\le K\Big\|\Big(\int_0^t\|\tilde E_h(t-\kappa_m(s))\tilde P_h\big(G(X^m_h(\kappa_m(s)))-G(X^m(\kappa_m(s)))\big)\|_{\mcal L_2^0}^2\ud s\Big)^{\frac{1}{2}}\Big\|_{\mbf L^p(\Omega;R)}\\
	&\le K\Big\|\Big(\int_0^t\|X^m_h(\kappa_m(s))-X^m(\kappa_m(s))\big)\|_{\mcal L_2^0}^2\ud s\Big)^{\frac{1}{2}}\Big\|_{\mbf L^p(\Omega;R)}\\
	&\le K\Big(\int_0^t\|X^m_h(\kappa_m(s))-X^m(\kappa_m(s))\|_{\mbf L^p(\Omega;H)}^2\ud s\Big)^{\frac{1}{2}}.
\end{align*}
From Proposition \ref{pro3}, we deduce for any $x\in H$ and $0\le \nu\le \mu\le 2$ that
\begin{align*}
	\|\big(\tilde{E}_h(t)\tilde P_h-E(t)\big)(-A)^{-\frac{\nu}{2}}x\|\le Kh^\mu t^{-\frac{\mu-\nu}{2}}\|(-A)^{-\frac{\nu}{2}}x\|_\nu\le Kh^\mu t^{-\frac{\mu-\nu}{2}}\|x\|,~t>0,
\end{align*}
which implies
\begin{align*}%\label{sec4eq1}
	\|\big(\tilde{E}_h(t)\tilde P_h-E(t)\big)(-A)^{-\frac{\nu}{2}}\|_{\mcal L(H)}\le Kh^\mu t^{-\frac{\mu-\nu}{2}},~t>0.
\end{align*}
For any fixed $\epsilon\in(0,1)$, applying the above inequality with $\mu=1+\sigma-\epsilon$ and $\nu=\sigma$, together with \eqref{GLip}, Lemma \ref{Xmregularity}(i), and the BDG inequality, we obtain
\begin{align*}
&\;\|M_{3,2}(t)\|_{\mbf L^p(\Omega;H)}\\
\le&\; K\Big\|\Big(\int_0^t\|\big(\tilde{E}_h(t-\kappa_m(s))\tilde P_h-E(t-\kappa_m(s))\big)G(X^m(\kappa_m(s)))\|_{\mcal L_2^0}^2\ud s\Big)^{\frac{1}{2}}\Big\|_{\mbf L^p(\Omega;R)}\\
\le&\; K\Big\|\Big(\int_0^t\|\big(\tilde{E}_h(t-\kappa_m(s))\tilde P_h-E(t-\kappa_m(s))\big)(-A)^{-\frac{\sigma}{2}}\|_{\mcal L(H)}^2\|(-A)^{\frac{\sigma}{2}}G(X^m(\kappa_m(s)))\|_{\mcal L_2^0}^2\ud s\Big)^{\frac{1}{2}}\Big\|_{\mbf L^p(\Omega;R)}\\
\le &\; Kh^{1+\sigma-\epsilon}\Big\|\Big(\int_0^t\|(t-\kappa_m(s))^{-(1-\epsilon)}(1+\|X^m(\kappa_m(s))\|_\sigma^2)\ud s\Big)^{\frac{1}{2}}\Big\|_{\mbf L^p(\Omega;R)}\\
\le &\;Kh^{1+\sigma-\epsilon}\Big(\int_0^t(t-s)^{-(1-\epsilon)}(1+\|X^m(\kappa_m(s))\|_{\mbf L^p(\Omega;\dot{H}^\sigma)}^2)\ud s\Big)^{\frac{1}{2}}
\le Kh^{1+\sigma-\epsilon}.
\end{align*}
Accordingly, it follows that
\begin{align}\label{M3}
	\|M_3(t)\|^2_{\mbf L^p(\Omega;H)}\le Kh^{2(1+\sigma-\epsilon)}+K\int_0^t\sup_{r\in[0,s]}\|X^m_h(r)-X^m(r)\|_{\mbf L^p(\Omega;H)}^2\ud s.
\end{align}

A combination of \eqref{M1}--\eqref{M3} gives
\begin{align*}
\sup_{r\in[0,t]}\|X^m_h(r)-X^m(r)\|^2_{\mbf L^p(\Omega;H)}\le Kh^{2(1+\sigma-\epsilon)}+K\int_0^t\sup_{r\in[0,s]}\|X^m_h(r)-X^m(r)\|_{\mbf L^p(\Omega;H)}^2\ud s,
\end{align*}
which finishes the proof due to the Gronwall inequality.
\end{proof}

Based on the above spatial strong convergence rate, by choosing a proper spatial index $h$ related to the temporal index $m$, one can get the asymptotic error distribution for the fully discrete method \eqref{Xmh} as stated in the following corollary.

\begin{cor}\label{fulldiscre}
	Let Assumptions \ref{assum1}--\ref{assum4} hold. Then for any $\iota>\frac{1}{2(1+\sigma)}$ and $t\in[0,T]$, $m^{\frac{1}{2}}\big(X^m_{m^{-\iota}}(t)-X(t)\big)\overset{d}{\Longrightarrow}U(t)$  in $H$ as $m\to\infty$ with $U$ being given by \eqref{U}.
\end{cor}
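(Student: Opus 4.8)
The plan is to split the normalized full-discretization error into a purely spatial part and the temporal normalized error $U^m$, and then show that the spatial part is asymptotically negligible while the temporal part already converges to the desired limit by Theorem \ref{maintheo}. Concretely, setting $h=m^{-\iota}$, I would write
$$m^{\frac{1}{2}}\big(X^m_{m^{-\iota}}(t)-X(t)\big)=m^{\frac{1}{2}}\big(X^m_{m^{-\iota}}(t)-X^m(t)\big)+U^m(t),$$
where $U^m(t)=m^{\frac12}(X^m(t)-X(t))$ is precisely the normalized temporal error from \eqref{Um}. Since $\sigma,\beta_1\in(0,1)$ force $\min(\sigma,1-\beta_1)>0$, the index $\eta=0$ is admissible in Theorem \ref{maintheo}, which therefore already yields $U^m(t)\overset{d}{\Longrightarrow}U(t)$ in $H$.

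The only real work is to show that the spatial remainder tends to $0$ in probability. First I would apply Lemma \ref{Xmh-Xm} with $h=m^{-\iota}$ and a free parameter $\epsilon\in(0,1)$ to be chosen, obtaining
$$\big\|m^{\frac12}\big(X^m_{m^{-\iota}}(t)-X^m(t)\big)\big\|_{\mbf L^p(\Omega;H)}\le K(\epsilon)\,m^{\frac12}\,(m^{-\iota})^{1+\sigma-\epsilon}=K(\epsilon)\,m^{\frac12-\iota(1+\sigma-\epsilon)}.$$
The exponent is strictly negative exactly when $\iota(1+\sigma-\epsilon)>\tfrac12$, that is, $\iota>\frac{1}{2(1+\sigma-\epsilon)}$.

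The key point---and the reason the hypothesis demands the strict inequality $\iota>\frac{1}{2(1+\sigma)}$---is that $\frac{1}{2(1+\sigma-\epsilon)}\to\frac{1}{2(1+\sigma)}$ as $\epsilon\downarrow 0$, so the strictness leaves enough room to fix a sufficiently small $\epsilon>0$ with $\frac{1}{2(1+\sigma)}<\frac{1}{2(1+\sigma-\epsilon)}<\iota$; this $\epsilon$-bookkeeping is the single subtle step of the argument. For such $\epsilon$ the exponent $\tfrac12-\iota(1+\sigma-\epsilon)$ is negative, the $\mbf L^p(\Omega;H)$-norm tends to $0$, and hence $m^{\frac12}(X^m_{m^{-\iota}}(t)-X^m(t))\to 0$ in probability in $H$. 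Finally I would combine this with $U^m(t)\overset{d}{\Longrightarrow}U(t)$ through Slutsky's theorem, exactly as in the concluding step of the proof of Theorem \ref{maintheo}, to deduce $m^{\frac12}(X^m_{m^{-\iota}}(t)-X(t))\overset{d}{\Longrightarrow}U(t)$ in $H$. I expect no genuine obstacle beyond the choice of $\epsilon$, since the spatial convergence rate and the temporal limit distribution are already supplied by Lemma \ref{Xmh-Xm} and Theorem \ref{maintheo}, respectively.
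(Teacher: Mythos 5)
Your proposal is correct and follows essentially the same route as the paper's own proof: the same decomposition $m^{\frac12}(X^m_{m^{-\iota}}(t)-X(t))=m^{\frac12}(X^m_{m^{-\iota}}(t)-X^m(t))+U^m(t)$, the same use of Lemma \ref{Xmh-Xm} with $h=m^{-\iota}$ together with the choice of a sufficiently small $\epsilon$ so that $(1+\sigma-\epsilon)\iota>\frac12$, and the same final combination of Theorem \ref{maintheo} with Slutzky's theorem. Nothing is missing.
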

\begin{proof}
Note that
\begin{align*}
	m^{\frac{1}{2}}\big(X^m_{m^{-\iota}}(t)-X(t)\big)=	m^{\frac{1}{2}}\big(X^m_{m^{-\iota}}(t)-X^m(t)\big)+U^m(t),
\end{align*}
where $U^m$ is defined in \eqref{Um}. 
Applying Lemma \ref{Xmh-Xm} yields that for any $\epsilon\in(0,1)$ and $t\in[0,T]$,
\begin{align}\label{sec4eq2}
	\|m^{\frac{1}{2}}\big(X^m_{m^{-\iota}}(t)-X^m(t)\big)\|_{\mbf L^p(\Omega;H)}\le K(\epsilon)m^{\frac{1}{2}-(1+\sigma-\epsilon)\iota}.
\end{align}
Since $\iota>\frac{1}{2(1+\sigma)}$, there is a sufficiently $\epsilon_0>0$ such that $(1+\sigma-\epsilon_0)\iota>\frac{1}{2}$. Taking $\epsilon=\epsilon_0$ in \eqref{sec4eq2} yields
\begin{align*}
		\|m^{\frac{1}{2}}\big(X^m_{m^{-\iota}}(t)-X^m(t)\big)\|_{\mbf L^p(\Omega;H)}\le K(\epsilon_0)m^{\frac{1}{2}-(1+\sigma-\epsilon_0)\iota}\to 0~\text{as}~m\to\infty.
\end{align*}
Thus, $\|m^{\frac{1}{2}}\big(X^m_{m^{-\iota}}(t)-X^m(t)\big)\|$ converges to $0$ in probability. Then the conclusion comes as a result of Theorem \ref{maintheo} and Slutzky's theorem (cf. \cite[Theorem
13.18]{Klenke}).
\end{proof}

\subsection{Asymptotic error distirbution for an example of SPDE}\label{Sec4.3}
In this subsection, we consider the stochastic heat equation  serving as a concrete example of \eqref{SPDE}. 

Let $\mcal O=(0,1)^d$ with $d\in\{1,2,3\}$ and $H=U=\mbf L^2(\mcal O;\mbb R)$. Consider the stochastic heat equation
\begin{align}\label{SHE}
	\frac{\PD}{\PD t} X(t,x)=\Delta X(t,x)+f(x,X(t,x))+g(x,X(t,x))	\frac{\PD}{\PD t} W(t,x),\quad(t,x)\in(0,T]\times\mcal O
\end{align}
with $X(t,x)=0$ for $(t,x)\in[0,T]\times\partial\mcal O$ and $X(0,x)=X_0(x)$ for $x\in\overline{\mcal O}$. 
Here, $\Delta=\sum_{i=1}^{d}\frac{\partial^2}{\partial x_i^2}$ is the Laplacian with homogeneous Dirichlet boundary condition, and hence admits the eigenfunctions $e_i(x)=2^{\frac{d}{2}}\sin(i_1\pi x_1)\cdots\sin(i_d\pi x_d)$ for $x=(x_1,\ldots,x_d)\in\mcal O$ and $i=(i_1,\ldots,i_d)\in\mbb (\mbb N^+)^d$.
Moreover, $W$ is a $Q$-Wiener process given by \eqref{W} with eigenbasis $\{h_i=e_i\}_{i\in(\mbb N^+)^d}$. 

Denote by $\mbf C^\delta(\mcal O;\mbb R)$ with $\delta\in(0,1]$ the space of $\delta$-H\"older continuous functions, equipped with the norm $\|v\|_{\mbf C^\delta(\mcal O;\mbb R)}:=\|v\|_{\mbf C(\mcal O;\mbb R)}+\sup\limits_{x,y\in\mcal O,x\neq y}\frac{|v(x)-v(y)|}{|x-y|^\delta}$, where $\|v\|_{\mbf C(\mcal O;\mbb R)}:=\sup\limits_{x\in\mcal O}|v(x)|$,
and by $W^{r,2}(\mcal O;\mbb R^d)$ with $r\ge 0$ the usual Sobolev space consisting of functions $v:\mcal O\to\mbb R$ with
\begin{align*}
	\|v\|_{W^{r,2}(\mcal O;\mbb R^d)}:=\Big(\int_\mcal O|v(x)|^2\ud x+\int_\mcal O\int_\mcal O\frac{|v(x)-v(y)|^2}{|x-y|^{(d+2r)}}\ud x\ud y\Big)^{\frac{1}{2}}<\infty.
\end{align*}

Define $F:H\to H$ and $G: H\to\mcal L_2^0$ by 
\begin{align*}
	(F(v))(x)&:=f(x,v(x)),\quad x\in\mcal O,~v\in H, \\
	(G(v)u)(x)&:=g(x,v(x))u(x),\quad x\in\mcal O,~v\in H,~u\in U_0.
\end{align*}

With the above preparation, \eqref{SHE} can be rewritten into the evolution form \eqref{SPDE} with $A=\Delta$.
Next we give the conditions on $X_0$, $f$, $g$, and $Q$.

\begin{con}\label{con1}
	The initial value $X_0$ satisfies $\|X_0\|_{\mbf L^4(\Omega;\dot{H}^{2})}<\infty$.
\end{con}

\begin{con}\label{con2}
	The function $f:\mcal O\times\mbb R\to\mbb R$ is twice continuously differentiable with
	\begin{align*}
		\int_\mcal O|f(x,0)|^2\ud x<\infty,\quad \sup_{x\in\mcal O}\sup_{y\in\mbb R}\Big|\frac{\PD^i}{\PD y^i}f(x,y)\Big|<\infty,\quad i=1,2.
	\end{align*}
\end{con}

\begin{con}\label{con3}
	The function $g:\mcal O\times\mbb R\to\mbb R$ is twice continuously differentiable with
	\begin{align*}
		\sup_{x\in\mcal O}|g(x,0)|+\sup_{x\in\mcal O}\sup_{y\in\mbb R}\Big(\Big|\frac{\PD}{\PD y}g(x,y)\Big|+\Big|\frac{\PD^2}{\PD y^2}g(x,y)\Big|+\Big|\frac{\PD}{\PD x}g(x,y)\Big|\Big)<\infty.
	\end{align*}
\end{con}
\begin{con}\label{con4}
The eigenvalues $q_i$ of $Q$ with $i\in(\mbb N^+)^d$ satisfy $q_i>0$, $\sum_{i\in(\mbb N^+)^d}q_i\|e_i\|^2_{\mbf C^1(\mcal O;\mbb R)}<\infty$, and $\sum_{i\in(\mbb N^+)^d}q_i^{(1-\gamma)}<\infty$ for any $\gamma\in(0,1)$.
\end{con}

\begin{lem}\label{verifyAssum}
	Under Conditions \ref{con1}--\ref{con4}, Assumptions \ref{assum1}--\ref{assum3} hold for $p=4$, $\sigma\in(\frac{1}{4},\frac{1}{2})$, $\alpha\in[\frac{d}{4},\sigma+\frac{1}{2})$, and $\beta_1,\beta_2\in(0,1)$.
\end{lem}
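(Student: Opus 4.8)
The plan is to translate each abstract hypothesis into a pointwise condition on $f$, $g$, and the eigenvalues $q_i$, exploiting that $F$ is the Nemytskii operator $F(v)(x)=f(x,v(x))$ and that $G(v)$ is the multiplication operator $w\mapsto g(\cdot,v(\cdot))w$, whose derivatives are again multiplications: $\mcal DF(v)u=\PD_yf(\cdot,v)u$, $\mcal D^2F(v)(u_1,u_2)=\PD_y^2f(\cdot,v)u_1u_2$, and the analogues for $G$. First I would record two structural facts: since $e_i$ is a product of sines, $\|e_i\|_{\mbf C(\mcal O;\mbb R)}\le 2^{d/2}$ uniformly in $i$; and by Condition \ref{con4}, $\sum_iq_i|e_i(x)|^2\le\sum_iq_i\|e_i\|_{\mbf C^1(\mcal O;\mbb R)}^2<\infty$ uniformly in $x$. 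Using the orthonormal basis $\{Q^{1/2}h_i\}=\{\sqrt{q_i}e_i\}$ of $U_0$, these give the identity $\|G(v)\|_{\mcal L_2^0}^2=\int_\mcal O|g(x,v(x))|^2\big(\sum_iq_i|e_i(x)|^2\big)\ud x$, which underpins all Hilbert--Schmidt estimates below.

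For Assumption \ref{assum1}, Condition \ref{con1} supplies the initial datum since $\dot H^2\hookrightarrow\dot H^{1+\sigma}$ for $\sigma<1$ by \eqref{norm}, and \eqref{FLip}--\eqref{GLip} follow from the boundedness of $\PD_yf,\PD_yg$ (Conditions \ref{con2}--\ref{con3}) together with the pointwise bound on $\sum_iq_i|e_i|^2$. The growth bound \eqref{Ggrow} is the delicate point. I would use the equivalence $\dot H^\sigma\cong W^{\sigma,2}(\mcal O;\mbb R)$, valid for $\sigma\in(0,\tfrac12)$ with no boundary condition, so that $\|(-A)^{\sigma/2}\,\cdot\,\|$ is comparable to the Gagliardo seminorm. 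Writing $g(x,u(x))e_i(x)-g(y,u(y))e_i(y)$ as $e_i(x)$ times the increment of $g(\cdot,u(\cdot))$ plus $g(y,u(y))$ times $e_i(x)-e_i(y)$, the first contribution is controlled by $\|e_i\|_\infty^2[g(\cdot,u(\cdot))]_\sigma^2$ with $[g(\cdot,u(\cdot))]_\sigma\le C(1+[u]_\sigma)$ (using bounded $\PD_xg,\PD_yg$ and $\int_\mcal O\int_\mcal O|x-y|^{2-2\sigma-d}\ud x\ud y<\infty$ for $\sigma<1$), while in the second I would pull out $\|e_i\|_{\mbf C^1}$ via $|e_i(x)-e_i(y)|\le\|e_i\|_{\mbf C^1}|x-y|$, leaving the \emph{integrable} kernel $|x-y|^{2-2\sigma-d}$ and hence only $\|g(\cdot,u(\cdot))\|_{L^2}^2\le C(1+\|u\|^2)$ (linear growth of $g$ from $\sup_x|g(x,0)|<\infty$). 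This avoids any $L^\infty$-control of $u$, which is unavailable since $\sigma<d/2$. Summing against $q_i$ and invoking $\sum_iq_i\|e_i\|_{\mbf C^1}^2<\infty$ yields $\|(-A)^{\sigma/2}G(u)\|_{\mcal L_2^0}^2\le C(1+\|u\|_\sigma^2)$.

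For Assumption \ref{assum2}, the twice continuous Fréchet differentiability of $F,G$ on $\dot H^\alpha$ follows from the $\mbf C^2$ regularity of $f,g$ by a standard dominated-convergence argument. The first-order bounds \eqref{F'}, \eqref{G'} are immediate from boundedness of $\PD_yf,\PD_yg$ and the pointwise bound on $\sum_iq_i|e_i|^2$. The second-order bounds \eqref{F''}, \eqref{G''} are where $\alpha\ge\tfrac d4$ enters, through $\|u_1u_2\|_{L^2}\le\|u_1\|_{L^4}\|u_2\|_{L^4}\le C\|u_1\|_\alpha\|u_2\|_\alpha$ via the Sobolev embedding $\dot H^\alpha\hookrightarrow L^4(\mcal O;\mbb R)$, which holds exactly for $\alpha\ge\tfrac d4$. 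The compatibility $\tfrac d4<\sigma+\tfrac12$ needed for $[\tfrac d4,\sigma+\tfrac12)$ to be nonempty holds automatically for $d=1,2$ and for $d=3$ precisely because $\sigma>\tfrac14$; this accounts for the stated ranges of $\sigma$ and $\alpha$.

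Finally, for Assumption \ref{assum3} I would compute the Hilbert--Schmidt norms after the $Q^{-\beta_2/2}$ twist: since $Q^{-\beta_2/2}Q^{1/2}h_i=q_i^{(1-\beta_2)/2}e_i$, one obtains $\|G(v)Q^{-\beta_2/2}\|_{\mcal L_2^0}^2=\sum_iq_i^{1-\beta_2}\|g(\cdot,v)e_i\|_{L^2}^2\le C(1+\|v\|^2)\sum_iq_i^{1-\beta_2}$, giving \eqref{assum3.2}, and, using in addition $\|(-A)^{-\beta_1/2}\,\cdot\,\|\le\lambda_1^{-\beta_1/2}\|\cdot\|$ and $\|e_i\|_\infty\le 2^{d/2}$, $\|(-A)^{-\beta_1/2}\mcal DG(v)uQ^{-\beta_2/2}\|_{\mcal L_2^0}^2\le C\|u\|^2\sum_iq_i^{1-\beta_2}$, giving \eqref{assum3.1}. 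Both sums are finite because $\sum_iq_i^{1-\beta_2}<\infty$ for $\beta_2\in(0,1)$ by Condition \ref{con4} (with $\gamma=\beta_2$), while $\beta_1\in(0,1)$ is free since the negative power of $-A$ only helps. The main obstacle throughout is the fractional-Sobolev product estimate for \eqref{Ggrow}; once the Gagliardo-seminorm splitting is arranged so that $\nabla e_i$ meets an integrable kernel, the remaining verifications reduce to routine bookkeeping against Condition \ref{con4}.
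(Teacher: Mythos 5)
Your proposal is correct, and it reaches the same conclusion by the same overall strategy as the paper: exploit the Nemytskii/multiplication structure of $F$ and $G$, use the Sobolev embedding $\dot H^\alpha\hookrightarrow\mbf L^4(\mcal O;\mbb R)$ for $\alpha\ge\frac d4$ to get \eqref{F''} and \eqref{G''}, and use $\sum_i q_i^{1-\beta_2}<\infty$ together with $\sup_i\|e_i\|_{\mbf C(\mcal O;\mbb R)}<\infty$ for Assumption \ref{assum3}; your verification of Assumptions \ref{assum2}--\ref{assum3} is essentially identical to the paper's computations. The genuine difference is in how the key estimate \eqref{Ggrow} (and the Lipschitz bounds \eqref{FLip}, \eqref{GLip}, \eqref{G'}) are obtained: the paper simply cites the Jentzen--R\"ockner results (their (14), (30), and the FCM paper's Section 4 and (38)), whereas you reconstruct the content of those citations from scratch — identifying $\dot H^\sigma$ with $W^{\sigma,2}(\mcal O;\mbb R)$ for $\sigma\in(0,\tfrac12)$ (no boundary condition, which is exactly why the range stops at $\tfrac12$) and splitting the Gagliardo seminorm of $g(\cdot,u(\cdot))e_i$ so that the increment of $e_i$ is paired with the integrable kernel $|x-y|^{2-2\sigma-d}$ and only costs $\|e_i\|_{\mbf C^1}$ against the $L^2$ norm of $g(\cdot,u(\cdot))$. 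That self-contained product estimate is the substance behind the cited inequality, so your route is more elementary and transparent about where Condition \ref{con4} and the restriction $\sigma<\tfrac12$ enter; you also make explicit (which the paper does not) that the lower bound $\sigma>\tfrac14$ is forced solely by nonemptiness of $[\tfrac d4,\sigma+\tfrac12)$ when $d=3$. One minor imprecision, at the same level of informality as the paper itself: the $\mbf L^4$ embedding is needed not only for the second-order bounds but already for Fr\'echet differentiability of $F:\dot H^\alpha\to H$ at first order (Nemytskii operators are not Fr\'echet differentiable from $\mbf L^2$ to $\mbf L^2$ unless affine), so "dominated convergence" should really be the quantitative Taylor remainder estimate $\|F(v+u)-F(v)-\mcal DF(v)u\|\le K\|u\|_{\mbf L^4}^2\le K\|u\|_\alpha^2$.
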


\begin{proof}
By \cite[(14)]{JentzenJDE}, $F:H\to H$ is well-defined and satisfies \eqref{FLip} under Condition \ref{con2}. According to the discussion of \cite[Page 121]{JentzenJDE}, $G:H\to\mcal L_2^0$ is well-defined and satisfies \eqref{GLip} under Condition \ref{con3}. In addition, it follows from \cite[(30)]{JentzenJDE} that
\begin{align*}
	\|(-A)^{r}G(v)\|_{\mcal L_2^0}\le K\big(\sup_{i\in(\mbb N^+)^d}q_i\|e_i\|_{\mbf C^1(\mcal O;\mbb R)}^2\big)(1+\|u\|_{2r})\le K(1+\|u\|_{2r}),\quad \forall~r\in(0,\frac{1}{4})
\end{align*}
under Condition \ref{con4}.
Consequently, \eqref{Ggrow} holds for any $\sigma\in(0,\frac{1}{2})$. The above facts combined with Condition \ref{con1} implies that Assumption \ref{assum1} hold for $p=4$ and all $\sigma\in(0,\frac{1}{2})$.

We deduce from Condition \ref{con2} that
\begin{align*}
	\|\mcal DF(v)u\|=\Big(\int_\mcal O\Big|\frac{\PD}{\PD y}f(x,v(x))u(x)\Big|^2\ud x\Big)^{\frac{1}{2}}\le K\|u\|,\quad\forall~ u\in H,~v\in H,
\end{align*}
which proves \eqref{F'}. In addition, For any $\alpha\ge\frac{d}{4}$,  due to the Sobolev embedding $\dot{H}^\alpha\hookrightarrow\mbf L^4(\mcal O;\mbb R)$ and Condition \ref{con2}, one has
\begin{align*}
	\|\mcal D^2F(v)(u_1,u_2)\|&=\Big(\int_\mcal O\Big|\frac{\PD^2}{\PD y^2}f(x,v(x))u_1(x)u_2(x)\Big|^2\ud x\Big)^{\frac12}\le K\|u_1\|_{\mbf L^4(\mcal O;\mbb R)}\|u_2\|_{\mbf L^4(\mcal O;\mbb R)}\\ 
	&\le K\|u_1\|_{\alpha}\|u_2\|_\alpha,\quad\forall~v,u_1,u_2\in\dot{H}^\alpha,
\end{align*}
which proves \eqref{F''}. 
Further, it has been shown in \cite[Section 4]{JentzenFCM} that $\|\mcal DG(v)u\|_{\mcal L_2^0}\le K\|u\|$ for any $u,v\in H$ under Condition \ref{con3}. Thus, \eqref{G'} holds true. By revisiting the proof of \cite[(38)]{JentzenFCM}, we have
\begin{align*}
	\|\mcal D^2G(v)(u_1,u_2)\|_{\mcal L_2^0}\le K\sqrt{\text{Tr}(Q)}\big(\sup_{i\in(\mbb N^+)^d}\|e_i\|_{\mbf C(\mcal O;\mbb R)}\big)\|u_1\|_{\mbf L^4(\mcal O;\mbb R)}\|u_2\|_{\mbf L^4(\mcal O;\mbb R)}.
\end{align*}
This combined with the Sobolev embedding $\dot{H}^\alpha\hookrightarrow\mbf L^4(\mcal O;\mbb R)$ for $\alpha\ge \frac{d}{4}$ yields 
\begin{align*}
	\|\mcal D^2G(v)(u_1,u_2)\|_{\mcal L_2^0}\le K\|u_1\|_\alpha\|u_2\|_\alpha,\quad \forall ~v,u_1,u_2\in\dot{H}^\alpha,
\end{align*}
which verifies \eqref{G''}. Thus, \eqref{F'}--\eqref{G''} hold for all $\alpha\ge \frac{d}{4}$. Accordingly, Assumption \ref{assum2} is fulfilled for all $\sigma\in(\frac{1}{4},\frac{1}{2})$ and $\alpha\in[\frac{d}{4},\sigma+\frac{1}{2})$. 

We proceed to verify Assumption \ref{assum3}. Note that under Condition \ref{con3}, $|g(x,y)|\le K(1+|y|)$ for any $x\in\mcal O,y\in\mbb R$.
Then for any $\beta_2\in(0,1)$, using Condition \ref{con4} gives
\begin{align*}
	\|G(v)Q^{-\frac{\beta_2}{2}}\|_{\mcal L_2^0}^2&=\sum_{i\in(\mbb N^+)^d}\|G(v)Q^{\frac{1-\beta_2}{2}}e_i\|^2=\sum_{i\in(\mbb N^+)^d}q_i^{1-\beta_2}\|G(v)e_i\|^2 \\
	&\le \sum_{i\in(\mbb N^+)^d}q_i^{1-\beta_2}\Big(\int_\mcal O|g(x,v(x))|^2\ud x\Big)\big(\sup_{i\in(\mbb N^+)^d}\|e_i\|_{\mbf C(\mcal O;\mbb R)}^2\big) \\
	&\le K(1+\|v\|^2)\sum_{i\in(\mbb N^+)^d}q_i^{1-\beta_2}
	\le K(1+\|v\|^2),\quad\forall~v\in H.
\end{align*} 
This implies that \eqref{assum3.2} holds for any $\beta_2\in(0,1)$. Finally, it follows from $\|(-A)^{-\eta}\|_{\mcal L(H)}\le K(\eta)$ for any $\eta\ge 0$ and Conditions \ref{con3}--\ref{con4} that for any $\beta_1,\beta_2\in(0,1)$,
\begin{align*}
	&\;\|(-A)^{-\frac{\beta_1}{2}}\mcal DG(v)uQ^{-\frac{\beta_2}{2}}\|_{\mcal L_2^0}^2=\sum_{i\in(\mbb N^+)^d}\|(-A)^{-\frac{\beta_1}{2}}\mcal DG(v)uQ^{\frac{1-\beta_2}{2}}e_i\|^2 \\
	\le &\;K\sum_{i\in(\mbb N^+)^d}\|DG(v)uQ^{\frac{1-\beta_2}{2}}e_i\|^2=K\sum_{i\in(\mbb N^+)^d}q_i^{(1-\beta_2)}\int_\mcal O\Big|\frac{\PD}{\PD y}g(x,v(x))u(x)e_i(x)\Big|^2\ud x\\
	\le&\; K\big(\sup_{i\in(\mbb N^+)^d}\|e_i\|_{\mbf C(\mcal O;\mbb R)}^2\big)\sum_{i\in(\mbb N^+)^d}q_i^{(1-\beta_2)}\|u\|^2
	\le K\|u\|^2,\quad\forall~u,v\in H,
\end{align*}
which verifies \eqref{assum3.1}. Thus, Assumption \ref{assum3} holds and the proof is complete. 
\end{proof}

As an immediate result of Theorem \ref{maintheo} and Lemma \ref{verifyAssum}, we obtain the asymptotic error distribution for the exponential Euler method, whose continuous solution is denoted by $\{X^m(t,\cdot)\}_{t\in[0,T]}$, applied to \eqref{SHE}.

\begin{theo}\label{SHEasymerror}
Consider the exponential Euler method \eqref{Xmmild} applied to the SPDE \eqref{SHE}. If Conditions \ref{con1}--\ref{con4} holds, then for any $\eta\in[0,\frac{1}{2})$ and $t\in[0,T]$, $m^{\frac{1}{2}}\big(X^m(t,\cdot)-X(t,\cdot)\big)\overset{d}{\Longrightarrow}U(t)$ in $\dot{H}^\eta$ as $m\to\infty$, where $U$ is given by \eqref{U}.
\end{theo}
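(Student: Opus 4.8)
The plan is to derive this statement directly from Theorem \ref{maintheo}, using Lemma \ref{verifyAssum} to convert Conditions \ref{con1}--\ref{con4} into the abstract hypotheses required there. First I would observe that, in the evolution-equation formulation of \eqref{SHE}, the numerical and exact solutions $X^m(t,\cdot)$ and $X(t,\cdot)$ are exactly the $H$-valued random variables $X^m(t)$ and $X(t)$ with $H=\mbf L^2(\mcal O;\mbb R)$. Hence $m^{\frac12}\big(X^m(t,\cdot)-X(t,\cdot)\big)$ coincides with the normalized error process $U^m(t)$ defined in \eqref{Um}, and it suffices to prove $U^m(t)\overset{d}{\Longrightarrow}U(t)$ in $\dot{H}^\eta$ for every $\eta\in[0,\frac12)$.

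Next I would fix $\eta\in[0,\frac12)$ and choose admissible parameters. By Lemma \ref{verifyAssum}, Assumptions \ref{assum1}--\ref{assum3} hold (with $p=4$) for any $\sigma\in(\frac14,\frac12)$, any $\alpha\in[\frac{d}{4},\sigma+\frac12)$, and any $\beta_1,\beta_2\in(0,1)$; note that the interval for $\alpha$ is nonempty precisely because $\sigma>\frac14\ge\frac{d}{4}-\frac12$ for $d\le 3$. Theorem \ref{maintheo} then gives $U^m(t)\overset{d}{\Longrightarrow}U(t)$ in $\dot{H}^\eta$ whenever $\eta<\min(\sigma,1-\beta_1)$. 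The key point is that this constraint can be met over the entire range $\eta<\frac12$: since the supremum of admissible values of $\sigma$ is $\frac12$, I may pick $\sigma\in(\max(\eta,\frac14),\frac12)$, a nonempty interval because $\eta<\frac12$; and since $1-\eta>\frac12>0$, I may pick $\beta_1\in(0,1-\eta)$, so that $1-\beta_1>\eta$. With these choices $\min(\sigma,1-\beta_1)>\eta$ and $\sigma,\beta_1\in(0,1)$, so the hypotheses of Theorem \ref{maintheo} are in force.

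The one point that must be checked—though it is essentially immediate—is that the limit $U$ is independent of the auxiliary indices $\sigma,\alpha,\beta_1,\beta_2$: the defining equation \eqref{U} is written purely in terms of the data $A$, $F$, $G$, $Q$ and the mild solution $X$ of \eqref{SPDE}, none of which reference these parameters. Consequently the same process $U$ governs the convergence in $\dot{H}^\eta$ for each $\eta\in[0,\frac12)$, and the conclusion follows. I expect no genuine obstacle here: the only care needed is to confirm that the endpoint $\eta=\frac12$ is correctly excluded while every $\eta$ strictly below it is captured by an admissible parameter choice, which is exactly the elementary selection above.
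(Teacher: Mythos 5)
Your proposal is correct and follows exactly the paper's route: the paper derives Theorem \ref{SHEasymerror} as an immediate consequence of Lemma \ref{verifyAssum} and Theorem \ref{maintheo}, with the parameter selection $\sigma\in(\max(\eta,\frac14),\frac12)$, $\beta_1\in(0,1-\eta)$ that you spell out being precisely what the paper leaves implicit in the phrase ``immediate result.'' Your additional observation that the limit $U$ in \eqref{U} does not depend on the auxiliary indices is a worthwhile clarification but not a departure from the paper's argument.
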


Next, we show for \eqref{SHE} that when $d=1$ and the function $g$ is affine with respect to the second variable, the conclusion of Theorem \ref{SHEasymerror} can be strengthened.

\begin{lem}\label{verifyAssum'}
Assume that Conditions \ref{con1}, \ref{con2}, and \ref{con4} hold with $d=1$, and in addition $g(x,y)=a_1y+a_2$ with two constants $a_1,a_2\in\mbb R$. Then Assumptions \ref{assum1}--\ref{assum3} hold for $p=4$, $\sigma\in(\frac{1}{2},1)$, $\alpha\in[\frac{1}{4},\sigma+\frac{1}{2})$, and $\beta_1,\beta_2\in(0,1)$.
\end{lem}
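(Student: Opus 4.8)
The plan is to follow the architecture of the proof of Lemma \ref{verifyAssum}, observing that the hypotheses on $X_0$ and $f$ are unchanged, so the corresponding conclusions transfer essentially verbatim. Condition \ref{con1} gives $X_0\in\dot H^2$, which covers the requirement $\|X_0\|_{\mbf L^4(\Omega;\dot H^{1+\sigma})}<\infty$ since $1+\sigma<2$; and under Condition \ref{con2} the drift $F$ satisfies \eqref{FLip}, \eqref{F'}, \eqref{F''} exactly as before, where \eqref{F''} uses only the $1$-dimensional Sobolev embedding $\dot H^\alpha\hookrightarrow\mbf L^4(\mcal O;\mbb R)$, valid for $\alpha\ge\tfrac14=\tfrac d4$. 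Thus the whole task reduces to re-examining the diffusion coefficient $G$ in light of the affine structure $g(x,y)=a_1y+a_2$.

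The decisive simplification is that, writing $(G(v)u)(x)=(a_1v(x)+a_2)u(x)$, the map $G$ is \emph{affine} in $v$, so the directional derivative $\mcal DG(v)u$ of \eqref{G'} is the $v$-independent multiplication operator $\phi\mapsto a_1u\phi$, and $\mcal D^2G\equiv0$; in particular \eqref{G''} holds trivially for every $\alpha$. For \eqref{GLip} and \eqref{G'} I would expand in the basis $\{Q^{\frac12}h_i\}=\{\sqrt{q_i}\,e_i\}$ and use $\|e_i\|_{\mbf C(\mcal O;\mbb R)}=\sqrt2$ together with $\tr(Q)=\sum_i q_i<\infty$; for instance $\|\mcal DG(v)u\|_{\mcal L_2^0}^2=a_1^2\sum_i q_i\|u\,e_i\|^2\le 2a_1^2\tr(Q)\|u\|^2$. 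Assumption \ref{assum3} is handled in the same spirit: since $(-A)^{-\frac{\beta_1}2}$ is bounded on $H$ and $Q^{-\frac{\beta_2}2}Q^{\frac12}h_i=q_i^{(1-\beta_2)/2}h_i$, both \eqref{assum3.1} and \eqref{assum3.2} reduce, after extracting $\|e_i\|_{\mbf C(\mcal O;\mbb R)}$, to the summability $\sum_i q_i^{1-\beta_2}<\infty$, which is exactly Condition \ref{con4} with $\gamma=\beta_2\in(0,1)$; hence these hold for all $\beta_1,\beta_2\in(0,1)$.

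The heart of the lemma, and the reason the admissible range widens to $\sigma\in(\tfrac12,1)$ rather than the $\sigma\in(\tfrac14,\tfrac12)$ of Lemma \ref{verifyAssum}, is the growth bound \eqref{Ggrow}. Expanding again gives $\|(-A)^{\frac\sigma2}G(v)\|_{\mcal L_2^0}^2=\sum_i q_i\|(a_1v+a_2)e_i\|_\sigma^2$, so the crux is a product estimate in $H^\sigma(\mcal O)$ for $d=1$. The key point I would exploit is that for $\sigma>\tfrac12$ one has the one-dimensional Sobolev embedding $H^\sigma(\mcal O)\hookrightarrow\mbf C(\mcal O;\mbb R)$, yielding the algebra inequality $\|\phi\psi\|_{H^\sigma}\le K(\|\phi\|_{H^\sigma}\|\psi\|_{\mbf C(\mcal O;\mbb R)}+\|\phi\|_{\mbf C(\mcal O;\mbb R)}\|\psi\|_{H^\sigma})\le K\|\phi\|_{H^\sigma}\|\psi\|_{H^\sigma}$. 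Applying it with $\phi=a_1v+a_2$ (whose $H^\sigma$-norm is $\le K(1+\|v\|_\sigma)$ since $\|v\|_{H^\sigma}\lesssim\|v\|_\sigma$ on $\dot H^\sigma$) and $\psi=e_i$, and noting that the product $(a_1v+a_2)e_i$ vanishes on $\partial\mcal O$—hence lies in $\dot H^\sigma$ with $\|\cdot\|_\sigma$ comparable to its $H^\sigma$-norm—gives $\|(a_1v+a_2)e_i\|_\sigma\le K(1+\|v\|_\sigma)\|e_i\|_{H^\sigma}$, where $\|e_i\|_\sigma^2=\lambda_i^\sigma\le K\|e_i\|_{\mbf C^1(\mcal O;\mbb R)}^2$ for $\sigma\le1$. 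Summing, $\sum_i q_i\|(a_1v+a_2)e_i\|_\sigma^2\le K(1+\|v\|_\sigma)^2\sum_i q_i\|e_i\|_{\mbf C^1(\mcal O;\mbb R)}^2\le K(1+\|v\|_\sigma)^2$ by Condition \ref{con4}. I expect this product estimate to be the main obstacle: unlike the general case, where one is forced below the threshold $\sigma<\tfrac12$ (so that $H^\sigma$ fails to be an algebra and only the crude bound \cite[(30)]{JentzenJDE} is available), here the combination $d=1$, $\sigma>\tfrac12$, and the perfectly smooth affine multiplier $g$ is precisely what unlocks the algebra property and the larger regularity index. With \eqref{Ggrow} in hand for all $\sigma\in(\tfrac12,1)$, Assumption \ref{assum1} follows, and collecting the above verifications completes the proof for $p=4$, $\sigma\in(\tfrac12,1)$, $\alpha\in[\tfrac14,\sigma+\tfrac12)$, and $\beta_1,\beta_2\in(0,1)$.
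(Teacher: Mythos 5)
Your proposal is correct and follows essentially the same route as the paper: both reduce the lemma to verifying \eqref{Ggrow} for $\sigma\in(\tfrac12,1)$ (the other assumptions carrying over from the proof of Lemma \ref{verifyAssum}), and both establish \eqref{Ggrow} by combining the characterization of $\dot H^\sigma$ for $\sigma>\tfrac12$, $d=1$ as $W^{\sigma,2}$-functions vanishing at the boundary, a product estimate in the $W^{\sigma,2}$-norm for $g(\cdot,v(\cdot))e_i$, the norm equivalence on $\dot H^\sigma$, and the summability $\sum_i q_i\|e_i\|_{\mbf C^1(\mcal O;\mbb R)}^2<\infty$ from Condition \ref{con4}. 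The only cosmetic differences are that you derive the product estimate yourself from the one-dimensional Sobolev embedding/algebra property where the paper cites \cite[(23)]{JentzenJDE}, and that you re-verify Assumptions \ref{assum2}--\ref{assum3} directly from the affine structure of $g$ (e.g., $\mcal D^2G\equiv 0$) where the paper simply observes these were already proved for all $\alpha\ge\frac d4$ and $\beta_1,\beta_2\in(0,1)$ in Lemma \ref{verifyAssum}.
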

\begin{proof}
Note that the current assumption on $g$ implies Condition \ref{con3}. In addition,  we indeed shows in the proof of Lemma \ref{verifyAssum} that \eqref{F'}--\eqref{assum3.2} hold for all $\alpha\ge\frac{d}{4}$ with $d\in\{1,2,3\}$ and for all $\beta_1,\beta_2\in(0,1)$. Thus Assumptions \ref{assum2} and \ref{assum3} hold for all $\sigma\in(\frac{1}{2},1)$, $\alpha\in[\frac{1}{4},\sigma+\frac{1}{2})$, and $\beta_1,\beta_2\in(0,1)$ provided $d=1$. 
It then suffices to prove that \eqref{Ggrow} holds for all $\sigma\in(\frac{1}{2},1)$.  

By \cite[(19)]{JentzenJDE}, for all $\gamma\in(\frac{1}{2},1)$,
\begin{align}\label{sec4eq3}
	\dot{H}^\gamma=\big\{v\in H:~\|v\|_{W^{\gamma,2}((0,1);\mbb R)}<\infty,~v(0)=v(1)=0\big\}.
\end{align}
Further, for any $\sigma\in(\frac{1}{2},1)$ and $v\in\dot{H}^\sigma$, \eqref{sec4eq3} implies $v\in W^{\sigma,2}((0,1);\mbb R)$ and thus $g(\cdot,v(\cdot))\in W^{\sigma,2}((0,1);\mbb R)$. It follows from \cite[(23)]{JentzenJDE}  that
\begin{align}\label{sec4eq4}
	\|g(\cdot,v(\cdot))e_i(\cdot)\|_{W^{\sigma,2}((0,1);\mbb R)}\le \frac{\sqrt{3}}{1-\sigma}\|v\|_{W^{\sigma,2}((0,1);\mbb R)}\|e_i\|_{\mbf C^1((0,1);\mbb R)}<\infty,\quad i\in\mbb N^+,
\end{align}
which implies $g(\cdot,v(\cdot))e_i(\cdot)\in\dot{H}^\sigma$ due to \eqref{sec4eq3} and $e_i(0)=e_i(1)=0$. Then \eqref{sec4eq4} and \cite[(20)]{JentzenJDE} yield
\begin{align*}
	\|g(\cdot,v(\cdot))e_i(\cdot)\|_\sigma\le K(\sigma)\|v\|_\sigma\|e_i\|_{\mbf C^1((0,1);\mbb R)},\quad v\in\dot{H}^\sigma.
\end{align*}
Thus, for any $\sigma\in(\frac{1}{2},1)$, using Condition \ref{con4} gives
\begin{align*}
\|(-A)^{\frac{\sigma}{2}}G(v)\|_{\mcal L_2^0}^2&=\sum_{i=1}^\infty \|(-A)^{\frac{\sigma}{2}}G(v)Q^{
\frac{1}{2}}e_i\|^2=\sum_{i=1}^\infty q_i\|g(\cdot,v(\cdot))e_i(\cdot)\|_\sigma^2\\
& \le K(\sigma)\Big(\sum_{i=1}^\infty q_i\|e_i\|_{\mbf C^1((0,1);\mbb R)}^2\Big)\|v\|_\sigma^2
\le K(\sigma)\|v\|_\sigma^2,\quad\forall~v\in\dot{H}^\sigma,
\end{align*}
which verifies \eqref{Ggrow} and completes the proof. 
\end{proof}

\begin{theo}\label{SHEasymerror'}
Consider the exponential Euler method \eqref{Xmmild} applied to the SPDE \eqref{SHE} with $d=1$. Under assumptions in Lemma \ref{verifyAssum'}, for any $\eta\in[0,1)$ and $t\in[0,T]$, $m^{\frac{1}{2}}\big(X^m(t,\cdot)-X(t,\cdot)\big)\overset{d}{\Longrightarrow}U(t)$ in $\dot{H}^\eta$ as $m\to\infty$, where $U$ is given by \eqref{U}.  

Accordingly, for any $(t,x)\in[0,T]\times(0,1)$, $m^{\frac{1}{2}}\big(X^m(t,x)-X(t,x)\big)\overset{d}{\Longrightarrow}U(t,x)$ in $\mbb R$ as $m\to\infty$. Here, $\{U(t,x),\,(t,x)\in[0,T]\times[0,1]\}$ is interpreted as the solution of 
\begin{align*}
	\frac{\PD}{\PD t}U(t,x)=&\;\frac{\PD^2}{\PD x^2}U(t,x)+\frac{\PD}{\PD y}f(x,X(t,x))U(t,x)+a_1U(t,x)\sum_{k=1}^{\infty}\sqrt{q_k}e_k(x)\frac{\ud}{\ud t}\beta_i(t) \\
	&\;-\sqrt{\frac{T}{2}}a_1(a_1X(t,x)+a_2)\sum_{l=1}^{\infty}\sum_{k=1}^{\infty}\sqrt{q_lq_k}e_l(x)e_k(x)\frac{\ud }{\ud t}\tilde{\beta}_{k,l}(t),\quad (t,x)\in(0,T]\times(0,1)
\end{align*}
with $U(t,x)=0$ for $(t,x)\in[0,T]\times\{0,1\}$ and $U(0,x)=0$ for $x\in[0,1]$.
\end{theo}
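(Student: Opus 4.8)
The plan is to derive both assertions from Theorem \ref{maintheo} together with the one-dimensional Sobolev embedding, and then to identify the limit by specializing the abstract equation \eqref{U}. First I would upgrade the range of $\eta$ in the $\dot H^\eta$-convergence to the whole interval $[0,1)$. Lemma \ref{verifyAssum'} guarantees that Assumptions \ref{assum1}--\ref{assum3} hold for every $\sigma\in(\frac12,1)$ and all $\beta_1,\beta_2\in(0,1)$. Fixing an arbitrary $\eta\in[0,1)$ and choosing $\sigma\in(\max(\eta,\frac12),1)$ together with $\beta_1\in(0,1-\eta)$, one has $\eta<\min(\sigma,1-\beta_1)$, so Theorem \ref{maintheo} gives $m^{\frac12}(X^m(t,\cdot)-X(t,\cdot))=U^m(t)\overset{d}{\Longrightarrow}U(t)$ in $\dot H^\eta$. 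As $\eta\in[0,1)$ was arbitrary, this establishes the first claim.

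Next I would pass from functional convergence to pointwise convergence. Fix any $\eta\in(\frac12,1)$ and $x\in(0,1)$. Since $d=1$ and $\eta>\frac{d}{2}$, the characterization \eqref{sec4eq3} together with the Sobolev embedding $W^{\eta,2}((0,1);\mbb R)\hookrightarrow\mbf C([0,1];\mbb R)$ shows that $\dot H^\eta\hookrightarrow\mbf C([0,1];\mbb R)$ continuously, so the point evaluation $\delta_x:\dot H^\eta\to\mbb R$, $v\mapsto v(x)$, is a bounded, hence continuous, linear functional. Applying the continuous mapping theorem to the convergence $U^m(t)\overset{d}{\Longrightarrow}U(t)$ in $\dot H^\eta$ then yields $m^{\frac12}(X^m(t,x)-X(t,x))=\delta_x(U^m(t))\overset{d}{\Longrightarrow}\delta_x(U(t))=U(t,x)$ in $\mbb R$; the moment bound of Lemma \ref{Uwellposed} ensures $U(t)\in\dot H^\eta$ almost surely, so $U(t,x)$ is well defined.

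Finally I would identify the explicit form of $U$ by reading off \eqref{U} in the present setting, where $E(t)=e^{t\Delta}$, $(\mcal DF(v)u)(x)=\partial_y f(x,v(x))u(x)$, and---because $g(x,y)=a_1y+a_2$---the operator $\mcal DG(v)u$ is multiplication by $a_1u$, independent of $v$, with $G(X(s))Q^{\frac12}h_l=(a_1X(s,\cdot)+a_2)\sqrt{q_l}\,e_l(\cdot)$. Substituting the expansions $W=\sum_k\sqrt{q_k}e_k\beta_k$ and $\widetilde W_l=\sum_k\sqrt{q_k}e_k\tilde\beta_{k,l}$, the three integrals in \eqref{U} correspond respectively to the drift $\partial_y f(x,X(t,x))U(t,x)$ (with the semigroup contributing the $\partial_x^2$ term), the multiplicative noise term $a_1U(t,x)\sum_k\sqrt{q_k}e_k(x)\,\ud\beta_k$, and the correction $-\sqrt{T/2}\,a_1(a_1X(t,x)+a_2)\sum_{l,k}\sqrt{q_lq_k}e_l(x)e_k(x)\,\ud\tilde\beta_{k,l}$, which is exactly the displayed SPDE for $U$.

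I expect the principal obstacle to be organizational rather than analytical: turning \eqref{U} into the explicit PDE requires a careful computation of the Fr\'echet derivatives of the Nemytskii operators $F$ and $G$ and a consistent interpretation of the infinite double sum in the correction term as a stochastic integral against the family $\{\widetilde W_l\}_{l\ge1}$ (Proposition \ref{independence} being what makes these cross terms orthogonal). By contrast, the genuinely new point---upgrading distributional convergence in $\dot H^\eta$ to convergence of the scalar point value $U(t,x)$---reduces cleanly to the one-dimensional embedding and the continuity of $\delta_x$, once the first step has pushed the convergence to some $\eta>\frac12$.
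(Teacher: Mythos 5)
Your proposal is correct and follows essentially the same route as the paper: invoke Lemma \ref{verifyAssum'} to choose $\sigma\in(\max(\eta,\frac12),1)$ and $\beta_1\in(0,1-\eta)$ so that Theorem \ref{maintheo} yields convergence in $\dot H^\eta$ for every $\eta\in[0,1)$, then obtain the pointwise statement via the continuity of the evaluation map $\varphi\mapsto\varphi(x)$ on $\dot H^\eta$ for $\eta>\frac12$ (Sobolev embedding) and the continuous mapping theorem. Your explicit parameter selection and the identification of the Nemytskii derivatives merely spell out details the paper leaves implicit, so there is no substantive difference.
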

\begin{proof}
It follows from Lemma \ref{verifyAssum'} and Theorem \ref{maintheo} that for any $\eta\in[0,1)$ and $t\in[0,T]$, $m^{\frac{1}{2}}\big(X^m(t,\cdot)-X(t,\cdot)\big)\overset{d}{\Longrightarrow}U(t)$ in $\dot{H}^\eta$ as $m\to\infty$. 

For any given  $x\in(0,1)$, define the mapping $\xi_x: \dot{H}^\eta\to \mbb R$ by $\xi_x(\varphi)=\varphi(x)$ for any $\varphi\in\dot{H}^\eta$ with $\eta>\frac{1}{2}$. Then $\xi_x$ is a continuous mapping due to the Sobolev embedding $\dot{H}^\eta\hookrightarrow \mbf C((0,1);\mbb R)$ for $\eta>\frac{1}{2}$. The continuous mapping theorem and $m^{\frac{1}{2}}\big(X^m(t,\cdot)-X(t,\cdot)\big)\overset{d}{\Longrightarrow}U(t,\cdot)$ in $\dot{H}^\eta$ for $\eta>\frac{1}{2}$ yield that $m^{\frac{1}{2}}\big(X^m(t,x)-X(t,x)\big)\overset{d}{\Longrightarrow}U(t,x)$ in $\mbb R$ for any $(t,x)\in[0,T]\times(0,1)$.
\end{proof}

\section{Asymptotic error of a spatial semi-discrete method}\label{Sec5}

In this section, we turn to studying the asymptotic error of a spatial semi-discrete method--the spectral Galerkin method--applied to \eqref{SPDE}. 
Interestingly, we find that for general SPDEs, it is difficult to identify a nontrivial asymptotic error distribution using this spatial semi-discrete method, which is different from cases for temporal semi-discretizations. We subsequently provide an example to explain the reason.

Applying the spatial spectral Galerkin method to \eqref{SPDE}, we obtain the corresponding finite-dimensional numerical solution $Y^N$, $N\in\mbb N^+$, given by
\begin{align}\label{YN}
	Y^N(t)=E_N(t)P_NX_0+\int_0^t E_N(t-s)P_NF(Y^N(s))\ud s+\int_0^t E_N(t-s)P_NG(Y^N(s))\ud W(s)
\end{align}
for $t\in[0,T]$, where $E_N(t)$ and $P_N$ are defined as in the very beginning of Section \ref{Sec3.2}. 

Similar to the proof of \eqref{Xspatial} and Lemma \ref{Xmregularity}(i), one can establish the spatial regularity of $Y^N$.

\begin{lem}\label{YNspatial}
	Let Assumption \ref{assum1} hold with $\sigma\in[0,1)$. Then  there is a constant $K=K(T)>0$ independent of $N$ such that
$$\sup\limits_{t\in[0,T]}\|Y^N(t)\|_{\mbf L^p(\Omega;\dot{H}^{1+\sigma})}\le K(1+\|X_0\|_{\mbf L^p(\Omega;\dot{H}^{1+\sigma})}).$$
\end{lem}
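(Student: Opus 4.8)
The plan is to mirror the proof of \eqref{Xspatial} (equivalently Lemma \ref{Xmregularity}(i)) essentially line by line, the only genuinely new point being that every constant must be shown to be independent of $N$. I start from the mild form \eqref{YN} and record the structural facts that make this transfer possible: $E_N(t)P_N=E(t)P_N$, the projector $P_N$ commutes with every fractional power $(-A)^{r/2}$ (they share the eigenbasis $\{e_i\}$), and $\|P_N\|_{\mcal L(\dot{H}^r)}\le 1$ for all $r$. Hence the smoothing estimates \eqref{semigroup1}--\eqref{semigroup3} hold verbatim for the operators $E_N(\cdot)P_N$ with $N$-independent constants, and \eqref{Ggrow} gives $\|(-A)^{\sigma/2}P_NG(u)\|_{\mcal L_2^0}\le L_2(1+\|u\|_\sigma)$ uniformly in $N$. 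Since $Y^N$ takes values in the finite-dimensional space $H_N$, the quantity $\|Y^N(t)\|_{\mbf L^p(\Omega;\dot{H}^{1+\sigma})}$ is a priori finite for each fixed $N$; the real content is therefore the $N$-uniform bound. I would split $Y^N$ into its initial, drift, and stochastic parts and estimate each in $\mbf L^p(\Omega;\dot{H}^{1+\sigma})$.

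For the initial part, commuting $(-A)^{(1+\sigma)/2}$ through $E(t)P_N$ and using the contraction property of $E(t)$ bounds it by $\|X_0\|_{\mbf L^p(\Omega;\dot{H}^{1+\sigma})}$ directly. For the drift part, \eqref{semigroup1} yields $\|(-A)^{(1+\sigma)/2}E_N(t-s)P_NF(Y^N(s))\|\le K(t-s)^{-(1+\sigma)/2}\|F(Y^N(s))\|$; because $\sigma<1$ forces $(1+\sigma)/2<1$ the kernel is integrable, and the linear growth of $F$ inherited from \eqref{FLip} reduces this contribution to $K\int_0^t(t-s)^{-(1+\sigma)/2}(1+\|Y^N(s)\|_{\mbf L^p(\Omega;\dot{H}^{1+\sigma})})\ud s$, which feeds into a singular Gronwall argument.

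The main obstacle is the stochastic part. The Burkholder--Davis--Gundy inequality reduces it to controlling $\mbf E\big(\int_0^t\|(-A)^{(1+\sigma)/2}E_N(t-s)P_NG(Y^N(s))\|_{\mcal L_2^0}^2\ud s\big)^{p/2}$, where the derivative gain is borderline. Factoring $(-A)^{(1+\sigma)/2}=(-A)^{1/2}(-A)^{\sigma/2}$ and bounding $\|(-A)^{1/2}E_N(t-s)P_N\|_{\mcal L(H)}$ pointwise through \eqref{semigroup1} would create the non-integrable singularity $(t-s)^{-1}$ (and, for fixed $N$, only the $N$-dependent constant $\lambda_N^{1/2}$), so this crude route cannot give a uniform bound. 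The correct argument, exactly as in the proof of \eqref{Xspatial}, is to use the space--time smoothing estimate \eqref{semigroup3} with $\rho=1$ in place of the pointwise bound, together with the regularity estimate \eqref{Ggrow} for $(-A)^{\sigma/2}G$, so that the time integration absorbs the critical half-derivative. All constants thereby produced are $N$-independent, and collecting the three estimates and applying the singular Gronwall inequality delivers the asserted bound uniformly in $N$; the auxiliary lower-order bounds $\sup_t\|Y^N(t)\|_{\mbf L^p(\Omega;\dot{H}^\gamma)}$ for $\gamma\le\sigma$ needed along the way follow by the same reasoning with a subcritical exponent.
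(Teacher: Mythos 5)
Your overall plan --- transplanting the appendix proof of Lemma \ref{Xmregularity}(i) and checking $N$-uniformity via $E_N(t)P_N=E(t)P_N$, the commutation of $P_N$ with the fractional powers of $-A$, and $\|P_N\|_{\mcal L(\dot{H}^r)}\le 1$ --- is exactly the intended route (the paper itself only says the proof is ``similar to that of \eqref{Xspatial} and Lemma \ref{Xmregularity}(i)''), and you correctly locate the crux: the stochastic convolution, where the pointwise smoothing bound \eqref{semigroup1} produces the non-integrable singularity $(t-s)^{-1}$. But your resolution of that crux does not work as stated. The estimate \eqref{semigroup3} is a statement about a \emph{fixed} vector $u\in H$; after the BDG inequality you must control
\begin{align*}
\Big\|\Big(\int_0^t\big\|(-A)^{\frac{1+\sigma}{2}}E(t-s)P_NG(Y^N(s))\big\|_{\mcal L_2^0}^2\ud s\Big)^{1/2}\Big\|_{\mbf L^p(\Omega;\mbb R)},
\end{align*}
whose integrand depends on $s$ through $Y^N(s)$, so \eqref{semigroup3} cannot simply be used ``in place of the pointwise bound'': there is no fixed $u$ to apply it to. This is precisely the difficulty that the template proof resolves in \eqref{Appendix4} by a step your proposal never mentions: inside the quadratic-variation integral (i.e.\ after BDG has been applied, so no adaptedness issue arises) one splits the integrand around its value frozen at the endpoint,
\begin{align*}
G(Y^N(s))=\big(G(Y^N(s))-G(Y^N(t))\big)+G(Y^N(t)).
\end{align*}
The frozen term is $s$-independent, so \eqref{semigroup3} with $\rho=1$ applies pathwise to each vector $(-A)^{\frac{\sigma}{2}}G(Y^N(t))Q^{\frac{1}{2}}h_i$ and \eqref{Ggrow} concludes; the difference term is controlled by the Lipschitz condition \eqref{GLip} together with a temporal H\"older estimate $\|Y^N(t)-Y^N(s)\|_{\mbf L^p(\Omega;H)}\le K|t-s|^{\frac{1+\sigma}{4}}$ uniform in $N$ (the analogue of \eqref{Appendix3}), which after squaring leaves the integrable singularity $(t-s)^{-\frac{1+\sigma}{2}}$.

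So two ingredients are missing from your write-up: the frozen-point decomposition itself, and the $N$-uniform temporal H\"older continuity of $Y^N$ that the difference term requires. The latter also exposes a second inaccuracy: this H\"older estimate rests on a uniform bound of $Y^N$ in $\dot{H}^{\frac{1+\sigma}{2}}$, and $\frac{1+\sigma}{2}>\sigma$, so your claim that auxiliary bounds for $\gamma\le\sigma$ suffice falls short --- one needs them for all $\gamma<1$, exactly as in \eqref{Appendix2}. Both missing pieces do transfer to $Y^N$ verbatim with $N$-independent constants thanks to the structural facts you listed, but without stating them the critical estimate does not close.
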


The convergence order of $Y^N$ is given in the following lemma, whose proof is also postponed to the appendix.

\begin{lem}\label{YNconverge}
	Let Assumption \ref{assum1} hold with $\sigma\in[0,1)$. Then it holds 
	\begin{align*}
		\sup_{t\in[0,T]}\|Y^N(t)-X(t)\|_{\mbf L^p(\Omega;H)}\le K\lambda_{N+1}^{-\frac{1+\sigma}{2}}.
	\end{align*}
\end{lem}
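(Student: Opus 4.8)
The plan is to avoid estimating the spatial projection error of the stochastic convolution head-on---which would yield only a rate arbitrarily close to, but short of, $\lambda_{N+1}^{-\frac{1+\sigma}{2}}$ because of a non-integrable endpoint singularity in time (one needs $(-A)^{\frac12}E(t-s)$ to compensate for the missing half order of regularity of $G(X(s))$, and its square $(t-s)^{-1}$ is not integrable)---and instead to compare $Y^N$ with the spectral projection $P_NX$ of the exact solution. First I would record the commutation $E(t)P_N=P_NE(t)$ (valid since $P_N$ projects onto eigenspaces of $A$) together with $E_N(t)P_N=E(t)P_N$, so that applying $P_N$ to the mild formula \eqref{mildsoution} gives
\begin{align*}
P_NX(t)=E(t)P_NX_0+\int_0^tE(t-s)P_NF(X(s))\ud s+\int_0^tE(t-s)P_NG(X(s))\ud W(s).
\end{align*}
Subtracting this from \eqref{YN} cancels the initial terms $E(t)P_NX_0$ and leaves
\begin{align*}
Y^N(t)-P_NX(t)=\int_0^tE(t-s)P_N\big(F(Y^N(s))-F(X(s))\big)\ud s+\int_0^tE(t-s)P_N\big(G(Y^N(s))-G(X(s))\big)\ud W(s),
\end{align*}
an equation whose right-hand side involves only differences of the coefficients and hence requires no smoothing from the semigroup.

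Then I would split the error as $Y^N(t)-X(t)=\big(Y^N(t)-P_NX(t)\big)-\big(Id_H-P_N\big)X(t)$. The projection part is handled at once: by \eqref{Pn-Id} with exponent $\frac{1+\sigma}{2}$ and the spatial regularity \eqref{Xspatial},
\begin{align*}
\|(Id_H-P_N)X(t)\|_{\mbf L^p(\Omega;H)}\le\lambda_{N+1}^{-\frac{1+\sigma}{2}}\|X(t)\|_{\mbf L^p(\Omega;\dot{H}^{1+\sigma})}\le K\lambda_{N+1}^{-\frac{1+\sigma}{2}},
\end{align*}
which is exactly the asserted order. For the remaining part I would apply the contraction property of $E$ and $P_N$, the global Lipschitz bounds \eqref{FLip}--\eqref{GLip}, the Burkholder--Davis--Gundy inequality for the stochastic term, and Minkowski's integral inequality to obtain
\begin{align*}
\|Y^N(t)-P_NX(t)\|_{\mbf L^p(\Omega;H)}\le K\int_0^t\|Y^N(s)-X(s)\|_{\mbf L^p(\Omega;H)}\ud s+K\Big(\int_0^t\|Y^N(s)-X(s)\|_{\mbf L^p(\Omega;H)}^2\ud s\Big)^{\frac12}.
\end{align*}

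Finally, inserting $\|Y^N(s)-X(s)\|_{\mbf L^p(\Omega;H)}\le\|Y^N(s)-P_NX(s)\|_{\mbf L^p(\Omega;H)}+K\lambda_{N+1}^{-\frac{1+\sigma}{2}}$, squaring, and using the Cauchy--Schwarz inequality on the first integral reduces the estimate to $\phi(t)\le K\int_0^t\phi(s)\ud s+K\lambda_{N+1}^{-(1+\sigma)}$ for $\phi(t):=\|Y^N(t)-P_NX(t)\|_{\mbf L^p(\Omega;H)}^2$; Gronwall's inequality then yields $\phi(t)\le K\lambda_{N+1}^{-(1+\sigma)}$ uniformly in $t$, and combining with the projection bound gives the claim. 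The only genuinely delicate point is the decision to compare against $P_NX$ rather than estimate $(Id_H-P_N)\int_0^tE(t-s)G(X(s))\,\ud W(s)$ directly: this transfers the full $\dot{H}^{1+\sigma}$-regularity burden onto $X(t)$ itself (where \eqref{Xspatial} supplies it sharply) and leaves the difference equation to be closed by a routine Lipschitz--Gronwall argument, thereby recovering the endpoint rate that the direct stochastic-convolution estimate narrowly misses.
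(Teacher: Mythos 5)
Your proof is correct, but it takes a genuinely different route from the paper's. The paper attacks $Y^N(t)-X(t)$ directly: it splits the stochastic term into $E(t-s)P_N\big(G(Y^N(s))-G(X(s))\big)$, $E(t-s)(P_N-Id_H)\big(G(X(s))-G(X(t))\big)$, and $E(t-s)(P_N-Id_H)G(X(t))$, i.e.\ it freezes the coefficient at the endpoint $t$. The difference part is then controlled by the temporal H\"older regularity \eqref{Xtemporal}, which supplies a factor $(t-s)^{1/2}$ that tames the singularity $(t-s)^{-\frac{1+\sigma}{2}}$, while the frozen part is handled by the integrated smoothing estimate \eqref{semigroup3}, which bounds $\int_0^t\|(-A)^{\frac12}E(t-s)u\|^2\ud s$ by $K\|u\|^2$ with no singularity at all; this is exactly how the paper overcomes the non-integrable $(t-s)^{-1}$ obstruction you identify at the outset. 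Your alternative — comparing $Y^N$ with the projected exact solution $P_NX$, so that the initial terms cancel via $E_N(t)P_N=E(t)P_N$ and $[P_N,E(t)]=0$, the entire projection error lands on $(Id_H-P_N)X(t)$ where \eqref{Pn-Id} and \eqref{Xspatial} give the sharp rate in one line, and the residual difference equation closes by contraction, Lipschitz continuity, BDG, Minkowski, and Gronwall — avoids \eqref{semigroup3} and \eqref{Xtemporal} entirely and requires no semigroup smoothing. What your route buys is brevity and a clean localization of the regularity burden in the single hypothesis $X(t)\in\dot H^{1+\sigma}$; what the paper's route buys is an error representation intrinsic to $Y^N-X$ whose endpoint-freezing mechanism is reused in its other proofs (e.g.\ the estimate of $S_5^m$ in the proof of Theorem \ref{strongorder}). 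One small point you should make explicit: Gronwall is applied to $\phi(t)=\|Y^N(t)-P_NX(t)\|^2_{\mbf L^p(\Omega;H)}$, so you need $\sup_{t\in[0,T]}\phi(t)<\infty$ a priori, which follows from Lemma \ref{YNspatial} together with \eqref{Xspatial}; this is a routine but necessary remark.
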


As a direct result of \eqref{Xspatial}, Lemmas \ref{YNspatial}--\ref{YNconverge}, and Proposition \ref{interpolation}, we have the convergence order of $Y^N$ in $\dot{H}^\gamma$ with $\gamma\in[0,1+\sigma)$. 

\begin{cor}\label{cor5.3}
Let Assumption \ref{assum1} hold with $\sigma\in[0,1)$. Then for for any $\gamma\in[0,1+\sigma)$,  
\begin{align*}
	\sup_{t\in[0,T]}\|Y^N(t)-X(t)\|_{\mbf L^p(\Omega;\dot{H}^\gamma)}\le K\lambda_{N+1}^{-\frac{1+\sigma-\gamma}{2}}.
\end{align*}
\end{cor}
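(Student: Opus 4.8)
The plan is to obtain the claim by interpolating the error $Y^N(t)-X(t)$ between $H=\dot{H}^0$, where Lemma \ref{YNconverge} provides the full rate $\lambda_{N+1}^{-(1+\sigma)/2}$, and $\dot{H}^{1+\sigma}$, where only a uniform bound is available. For $\gamma=0$ the estimate is exactly Lemma \ref{YNconverge}, so I assume $\gamma\in(0,1+\sigma)$ throughout.

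First I would apply Proposition \ref{interpolation} pathwise with $p=0$ and $q=1+\sigma$, so that $\theta_\gamma=\frac{\gamma}{1+\sigma}\in(0,1)$ and, almost surely,
\begin{align*}
\|Y^N(t)-X(t)\|_\gamma\le\|Y^N(t)-X(t)\|^{1-\theta_\gamma}\|Y^N(t)-X(t)\|_{1+\sigma}^{\theta_\gamma}.
\end{align*}
Raising this to the $p$-th power and applying the H\"older inequality on $(\Omega,\mcal F,\mbf P)$ with the conjugate exponents $\frac{1}{1-\theta_\gamma}$ and $\frac{1}{\theta_\gamma}$ then separates the two factors:
\begin{align*}
\|Y^N(t)-X(t)\|_{\mbf L^p(\Omega;\dot{H}^\gamma)}\le\|Y^N(t)-X(t)\|_{\mbf L^p(\Omega;H)}^{1-\theta_\gamma}\|Y^N(t)-X(t)\|_{\mbf L^p(\Omega;\dot{H}^{1+\sigma})}^{\theta_\gamma}.
\end{align*}

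Next I would bound the two factors. The first is controlled by Lemma \ref{YNconverge}, giving $\lambda_{N+1}^{-(1+\sigma)/2}$. For the second, the triangle inequality together with \eqref{Xspatial} and Lemma \ref{YNspatial} yields a bound by a constant $K=K(T)$ independent of $N$, namely $\|Y^N(t)-X(t)\|_{\mbf L^p(\Omega;\dot{H}^{1+\sigma})}\le K(1+\|X_0\|_{\mbf L^p(\Omega;\dot{H}^{1+\sigma})})$. Substituting these bounds and using the identity $(1+\sigma)(1-\theta_\gamma)=1+\sigma-\gamma$ gives
\begin{align*}
\|Y^N(t)-X(t)\|_{\mbf L^p(\Omega;\dot{H}^\gamma)}\le K\lambda_{N+1}^{-\frac{(1+\sigma)(1-\theta_\gamma)}{2}}=K\lambda_{N+1}^{-\frac{1+\sigma-\gamma}{2}},
\end{align*}
and taking the supremum over $t\in[0,T]$ completes the argument.

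There is no genuine obstacle here; the estimate is a routine consequence of the stated results. The only points requiring a little care are the justification of the H\"older step in $\Omega$ (which is precisely why one needs $\theta_\gamma\in(0,1)$, hence the strict restriction $\gamma<1+\sigma$) and the separate, immediate treatment of the endpoint $\gamma=0$.
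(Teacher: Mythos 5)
Your proposal is correct and follows essentially the same route as the paper, which obtains Corollary \ref{cor5.3} precisely by combining the $H$-rate of Lemma \ref{YNconverge} with the uniform $\dot{H}^{1+\sigma}$ bounds from \eqref{Xspatial} and Lemma \ref{YNspatial} through Proposition \ref{interpolation} and the H\"older inequality in $\Omega$. Your handling of the exponent bookkeeping, the endpoint $\gamma=0$, and the H\"older step matches the paper's own interpolation argument (as carried out explicitly in the appendix proof of Lemma \ref{Xmregularity}(ii)).
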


Next we give the asymptotic error of $Y^N$ based on the strong convergence rate given in Lemma \ref{YNconverge}.

\begin{theo}\label{YNerrorasym}
Let Assumptions \ref{assum1} and \ref{assum2} hold with $\sigma\in[0,1)$ and $\alpha\in[0,\frac{1+\sigma}{2})$. Then for any $t\in[0,T]$, it holds that $\lim\limits_{N\to\infty}\lambda_{N+1}^{\frac{1+\sigma}{2}}\big(Y^N(t)-X(t)\big)=0$  in $\mbf L^2(\Omega;H)$.
\end{theo}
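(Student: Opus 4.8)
The plan is to exploit the special algebraic structure of the spectral Galerkin scheme to reduce everything to a single source term, and then to show that this source term, once multiplied by the exact strong order $\lambda_{N+1}^{(1+\sigma)/2}$, is merely a \emph{tail} of a convergent series and hence vanishes. Since $E_N(t)P_N=E(t)P_N$, the discretization operator obeys $E_N(t-s)P_N-E(t-s)=-E(t-s)(Id_H-P_N)$. Subtracting \eqref{YN} from \eqref{mildsoution} and inserting this identity, I would write, with $e^N:=Y^N-X$,
\begin{align*}
e^N(t)=-\Xi^N(t)+\int_0^tE(t-s)P_N\big(F(Y^N(s))-F(X(s))\big)\ud s+\int_0^tE(t-s)P_N\big(G(Y^N(s))-G(X(s))\big)\ud W(s),
\end{align*}
where the source term collects the projection error of the data, the drift, and the diffusion,
\begin{align*}
\Xi^N(t):=E(t)(Id_H-P_N)X_0+\int_0^tE(t-s)(Id_H-P_N)F(X(s))\ud s+\int_0^tE(t-s)(Id_H-P_N)G(X(s))\ud W(s).
\end{align*}

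Setting $V^N:=\lambda_{N+1}^{(1+\sigma)/2}e^N$ and using $\|E(t-s)P_N\|_{\mcal L(H)}\le1$, the It\^o isometry, and the Lipschitz bounds \eqref{FLip}--\eqref{GLip}, the two integral terms are controlled by $C\int_0^t\mbf E\|V^N(s)\|^2\ud s$, so that $\mbf E\|V^N(t)\|^2\le 3\lambda_{N+1}^{1+\sigma}\mbf E\|\Xi^N(t)\|^2+C\int_0^t\mbf E\|V^N(s)\|^2\ud s$. The Gronwall inequality then reduces the statement to showing that $b_N(t):=\lambda_{N+1}^{1+\sigma}\mbf E\|\Xi^N(t)\|^2\to0$ for each fixed $t$, with the uniform bound $\sup_N\sup_tb_N(t)<\infty$ (which is exactly Lemma \ref{YNconverge}) so that dominated convergence handles the convolution term. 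Two of the three pieces of $b_N$ are routine: for the data term, $\lambda_{N+1}^{1+\sigma}e^{-2\lambda_it}\le\lambda_i^{1+\sigma}$ for $i>N$ gives $\lambda_{N+1}^{1+\sigma}\mbf E\|E(t)(Id_H-P_N)X_0\|^2\le\mbf E\|(Id_H-P_N)X_0\|_{1+\sigma}^2\to0$, a tail of a convergent series; for the drift term the same modal bound produces the kernel $(t-s)^{-(1+\sigma)/2}$, which is integrable precisely because $\sigma<1$, and coupling it with $\|(Id_H-P_N)F(X(s))\|_{\mbf L^2(\Omega;H)}\to0$ via dominated convergence finishes it.

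The diffusion piece is the crux. By the It\^o isometry it equals $\int_0^t\sum_{i>N}\lambda_{N+1}^{1+\sigma}e^{-2\lambda_i(t-s)}g_i(s)\ud s$, where $g_i(s)$ is the $i$-th modal energy of $G(X(s))$, normalized so that $\sum_i\lambda_i^\sigma g_i(s)=\mbf E\|(-A)^{\sigma/2}G(X(s))\|_{\mcal L_2^0}^2\le C$ by \eqref{Ggrow} and \eqref{Xspatial}. The decisive observation is that \emph{after integrating the kernel in time} one has $\int_0^te^{-2\lambda_i(t-s)}\ud s\le(2\lambda_i)^{-1}$ and $\lambda_{N+1}^{1+\sigma}(2\lambda_i)^{-1}=\tfrac12\lambda_i^\sigma(\lambda_{N+1}/\lambda_i)^{1+\sigma}\le\tfrac12\lambda_i^\sigma$ for $i>N$, so the normalized diffusion error is dominated by the \emph{tail} $\sum_{i>N}\lambda_i^\sigma g_i$ of the above convergent series; this tail vanishes, which is the structural reason the limit is null rather than a nontrivial Gaussian.

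The obstacle is that this integrate-then-normalize estimate requires freezing the slowly varying coefficient $g_i(s)$ at the endpoint $s=t$, whereas $\lambda_ie^{-2\lambda_i(t-s)}$ behaves like $(t-s)^{-1}$ near $s=t$ and is not integrable. I would therefore write $g_i(s)=g_i(t)+(g_i(s)-g_i(t))$: the frozen part contributes $\tfrac12\sum_{i>N}\lambda_i^\sigma g_i(t)\to0$ directly, and for the correction I would split the time interval as $[0,t-\delta]\cup[t-\delta,t]$. On $[0,t-\delta]$ the factor $e^{-2\lambda_i\delta}$ gives super-polynomial decay in $\lambda_{N+1}$, so this part tends to $0$ as $N\to\infty$ for fixed $\delta$; on $[t-\delta,t]$ I would use the uniform-in-$i$ bound $\sum_i\lambda_i^\sigma|g_i(s)-g_i(t)|\le C\|(-A)^{\sigma/2}(G(X(s))-G(X(t)))\|_{\mbf L^2(\Omega;\mcal L_2^0)}$ (Cauchy--Schwarz plus $\sum_i\lambda_i^\sigma g_i\le C$) together with a temporal modulus of continuity for $(-A)^{\sigma/2}G(X(\cdot))$, the $1/2$-H\"older regularity of $X$ in $\dot H^\sigma$ from \eqref{Xtemporal} combined with Assumption \ref{assum2} yielding a modulus $\omega(\tau)\le C\tau^{1/2}$ that satisfies the Dini condition $\int_0^{\delta}\omega(\tau)\tau^{-1}\ud\tau<\infty$ and makes the $[t-\delta,t]$ contribution $\le C\delta^{1/2}$ uniformly in $N$; letting first $N\to\infty$ and then $\delta\to0$ closes the argument. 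I expect this temporal-regularity estimate to be the main difficulty: because $G(X)$ carries only $\dot H^\sigma$ (not $\dot H^{1+\sigma}$) spatial regularity, the extra half-derivative encoded in the rate must be extracted entirely from the semigroup, and it is the competition between this borderline singular kernel and the vanishing high-frequency tail that produces the zero limit, with all remaining steps being standard applications of the It\^o isometry, the smoothing bound \eqref{semigroup1}, and Gronwall's inequality.
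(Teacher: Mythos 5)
Most of your architecture is sound and in fact parallels the paper's proof: the identity $E_N(t-s)P_N=E(t-s)P_N$, the Gronwall reduction to the projection source term, the tail-of-convergent-series treatment of the data and drift pieces, and the frozen-at-$t$ part of the diffusion (your integrate-then-normalize modal bound $\lambda_{N+1}^{1+\sigma}(2\lambda_i)^{-1}\le\tfrac12\lambda_i^{\sigma}$ is exactly what the paper extracts from \eqref{semigroup3}). Your observation that plain Lipschitz continuity of $F,G$ suffices for the Gronwall terms is correct and even simpler than the paper, which Taylor-expands these terms using Assumption \ref{assum2}.

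The genuine gap is in the time-increment correction of the diffusion piece. You decompose the \emph{modal energies}, $g_i(s)=g_i(t)+(g_i(s)-g_i(t))$, and on $[t-\delta,t]$ you need $\sum_i\lambda_i^{\sigma}|g_i(s)-g_i(t)|\le C\,\omega(t-s)$ with $\omega(\tau)\le C\tau^{1/2}$, i.e.\ a $\tfrac12$-H\"older temporal modulus for $(-A)^{\sigma/2}G(X(\cdot))$ in $\mbf L^2(\Omega;\mcal L_2^0)$. This does not follow from the hypotheses: Assumption \ref{assum1} gives only the linear \emph{growth} bound \eqref{Ggrow} for $(-A)^{\sigma/2}G$, while every Lipschitz/differentiability bound (\eqref{GLip}, \eqref{G'}, \eqref{G''}) is in the \emph{unweighted} $\mcal L_2^0$-norm; so the needed estimate $\mbf E\|(-A)^{\sigma/2}(G(X(s))-G(X(t)))\|_{\mcal L_2^0}^2\le C|t-s|$ is unjustified. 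Interpolating between \eqref{GLip} and \eqref{Ggrow} only yields $\mbf E\|(-A)^{\sigma'/2}(G(X(s))-G(X(t)))\|_{\mcal L_2^0}^2\le C|t-s|^{1-\sigma'/\sigma}$ for $\sigma'<\sigma$, and feeding this into your kernel splitting makes the Dini integral converge only when $\sigma<\tfrac12$, whereas the theorem allows $\sigma\in[0,1)$. The root cause is structural: splitting at the level of energies keeps only $|g_i(s)-g_i(t)|=|a^2-b^2|=|a-b|\,|a+b|$, one power of the increment, which after Cauchy--Schwarz yields $|t-s|^{1/2}$; that half power is exactly what forces you into the unavailable weighted modulus.

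The paper avoids this by splitting $G(X(s))=G(X(t))+\big(G(X(s))-G(X(t))\big)$ \emph{inside the stochastic integral, before applying the It\^o isometry} (its terms $I_{3,2}^N$ and $I_{3,3}^N$). The correction then involves the modal energies of the difference operator, i.e.\ $(a-b)^2$ rather than $|a-b|\,|a+b|$, whose total mass $\mbf E\|G(X(s))-G(X(t))\|_{\mcal L_2^0}^2\le C|t-s|$ carries the \emph{full} power of the increment via the unweighted bounds \eqref{GLip} and \eqref{Xtemporal}. One then trades the normalization differently: for $i>N$, $\lambda_{N+1}^{1+\sigma}\le\lambda_{N+1}^{-\frac{1-\sigma}{2}}\lambda_i^{\frac{3+\sigma}{2}}$, producing the kernel $(t-s)^{-\frac{3+\sigma}{2}}\cdot(t-s)=(t-s)^{-\frac{1+\sigma}{2}}$, integrable since $\sigma<1$, and an explicit surplus rate $\lambda_{N+1}^{-\frac{1-\sigma}{2}}$ — no $\delta$-splitting, no Dini condition, and no weighted continuity of $G$ required. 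Replacing your energy-level decomposition by this operator-level decomposition repairs the proof; the rest of your argument can stand as written.
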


\begin{proof}
Denote the normalized error process $V^N(t):=\lambda_{N+1}^{\frac{1+\sigma}{2}}\big(Y^N(t)-X(t)\big)$ for $t\in[0,T]$, and decompose it into $V^N=I_1^N+I_2^N+I_3^N$ with
\begin{align*}
I_1^N(t)&:=\lambda_{N+1}^{\frac{1+\sigma}{2}}E(t)(P_N-Id_H)X_0, \\
I_2^N(t)&:=\lambda_{N+1}^{\frac{1+\sigma}{2}}\int_0^t\big(E(t-s)P_NF(Y^N(s))-E(t-s)F(X(s))\big)\ud s, \\
I_3^N(t)&:=\lambda_{N+1}^{\frac{1+\sigma}{2}}\int_0^t\big(E(t-s)P_NG(Y^N(s))-E(t-s)G(X(s))\big)\ud W(s).
\end{align*}
 
Noting that $0<\lambda_1\le\cdots\le\lambda_i\le\cdots$, we have for almost sure (a.s.) $\omega\in\Omega$ that
\begin{align*}
	\sup_{t\in[0,T]}\|I_1^N(t)\|^2&=\lambda_{N+1}^{1+\sigma}\sup_{t\in[0,T]}\|E(t)(P_N-Id_H)(-A)^{-\frac{1+\sigma}{2}}(-A)^{\frac{1+\sigma}{2}}X_0\|^2 \\
	&=\lambda_{N+1}^{1+\sigma}\sup_{t\in[0,T]}\sum_{i=N+1}^\infty e^{-2\lambda_i t}\lambda_i^{-(1+\sigma)}\big\LL (-A)^{\frac{1+\sigma}{2}}X_0,e_i\big\RR^2 \\
	&\le \sum_{i=N+1}^\infty\big\LL (-A)^{\frac{1+\sigma}{2}}X_0,e_i\big\RR^2.
\end{align*} 
 Since $(-A)^{\frac{1+\sigma}{2}}X_0\in H$ for a.s. $\omega\in\Omega$, one has $\lim\limits_{N\to\infty}\sum_{i=N+1}^\infty\big\LL (-A)^{\frac{1+\sigma}{2}}X_0,e_i\big\RR^2=0$. Thus, for a.s. $\omega\in\Omega$, $\lim\limits_{N\to\infty}\sup\limits_{t\in[0,T]}\|I_1^N(t)\|^2=0$. 
 Further, using \eqref{Pn-Id} yields
 \begin{align*}
 	\sup_{t\in[0,T]}\|I_1^N(t)\|^2\le \|(-A)^{\frac{1+\sigma}{2}}X_0\|^2=\|X_0\|_{1+\sigma}^2.
 \end{align*}
It follows from $\|X_0\|_{\mbf L^p(\Omega;\dot{H}^{1+\sigma})}<\infty$ with $p\ge 4$ and the dominated convergence theorem that
\begin{align}
\lim_{N\to\infty}\mbf E\left[\sup_{t\in[0,T]}\|I_1^N(t)\|^2\right]=0, \label{I1N}
\end{align}
which further implies
\begin{align}\label{sec5eq6}
	\sup_{N\ge1}\mbf E\left[\sup_{t\in[0,T]}\|I_1^N(t)\|^2\right]\le K<\infty.
\end{align}

Further, we decompose $I_2^N=I_{2,1}^N+I_{2,2}^N$ with 
\begin{align*}
	I_{2,1}^N(t)&:=\lambda_{N+1}^{\frac{1+\sigma}{2}}\int_0^tE(t-s)P_N\big(F(Y^N(s))-F(X(s))\big)\ud s,\\
	I_{2,2}^N(t)&:=\lambda_{N+1}^{\frac{1+\sigma}{2}}\int_0^tE(t-s)\big(P_N-Id_H\big)F(X(s))\ud s.
\end{align*}
The Taylor theorem yields
\begin{align*}
	I_{2,1}^N(t)=\int_0^tE(t-s)P_N\mcal DF(X(s))V^N(s)\ud s+R_{I_{2,1}^N},
\end{align*}
where
\begin{align*}
R_{I_{2,1}^N}=\lambda_{N+1}^{\frac{1+\sigma}{2}}\int_0^tE(t-s)P_N\int_0^1(1-\lambda)\mcal D^2F(X(s)+\lambda(Y^N(s)-X(s)))\big(Y^N(s)-X(s)\big)^2\ud\lambda\ud s.	
\end{align*}
Applying \eqref{F''} and Corollary \ref{cor5.3} gives
\begin{align*}
\mbf E\|R_{I_{2,1}^N}\|^2\le K\lambda_{N+1}^{1+\sigma}\int_0^t\mbf E\|Y^N(s)-X(s)\|_\alpha^4\ud s\le K\lambda_{N+1}^{-(1+\sigma-2\alpha)}.
\end{align*}
This together with \eqref{F'} leads to
\begin{align*}
	\mbf E\|I_{2,1}^N(t)\|^2\le K\int_0^t\mbf E\|V^N(s)\|^2\ud s+K\lambda_{N+1}^{-(1+\sigma-2\alpha)}.
\end{align*}
It follows from \eqref{semigroup1}, \eqref{Xspatial}, the linear growth property of $F$, and \eqref{Pn-Id} that
\begin{align*}
	&\;\|I_{2,2}^N(t)\|_{\mbf L^2(\Omega;H)}\\
	\le&\; K\lambda_{N+1}^{\frac{1+\sigma}{2}}\int_0^t\|(-A)^{\frac{3+\sigma}{4}}E(t-s)\|_{\mcal L(H)}\|(-A)^{-\frac{3+\sigma}{4}}(P_N-Id_H)\|_{\mcal L(H)}(1+\|X(s)\|_{\mbf L^2(\Omega;H)})\ud s \\
	\le &\; K\lambda_{N+1}^{-\frac{1-\sigma}{4}}\int_0^t(t-s)^{-\frac{3+\sigma}{4}}\ud s \le K\lambda_{N+1}^{-\frac{1-\sigma}{4}}.
\end{align*}
In this way, it holds that for any $t\in[0,T]$, 
\begin{align}\label{I2N}
	\mbf E\|I_2^N(t)\|^2\le K\int_0^t\mbf E\|V^N(s)\|^2\ud s+K\big(\lambda_{N+1}^{-(1+\sigma-2\alpha)}+\lambda_{N+1}^{-\frac{1-\sigma}{2}}\big).
\end{align}

Next, we turn to tackling $I_3^N$, which is decomposed into $I_3^N=\sum_{i=1}^{3}I_{3,i}^N$ with
\begin{align*}
I_{3,1}^N(t)&:=\lambda_{N+1}^{\frac{1+\sigma}{2}}\int_0^tE(t-s)P_N\big(G(Y^N(s))-G(X(s))\big)\ud W(s),\\
I_{3,2}^N(t)&:=\lambda_{N+1}^{\frac{1+\sigma}{2}}\int_0^tE(t-s)\big(P_N-Id_H\big)\big(G(X(s))-G(X(t))\big)\ud W(s),\\
I_{3,3}^N(t)&:=\lambda_{N+1}^{\frac{1+\sigma}{2}}\int_0^tE(t-s)\big(P_N-Id_H\big)G(X(t))\ud W(s).
\end{align*}
By the Taylor theorem,  
\begin{align*}
	I_{3,1}^N(t)=\int_0^tE(t-s)P_N\mcal DG(X(s))V^N(s)\ud W(s)+R_{I_{3,1}^N},
\end{align*}
where
\begin{align*}
	R_{I_{3,1}^N}=\lambda_{N+1}^{\frac{1+\sigma}{2}}\int_0^tE(t-s)P_N\int_0^1(1-\lambda)\mcal D^2G(X(s)+\lambda(Y^N(s)-X(s)))\big(Y^N(s)-X(s)\big)^2\ud\lambda\ud W(s).	
\end{align*}
Using the It\^o isometry, \eqref{G'}, \eqref{G''}, and Corollary \ref{cor5.3}, we derive
\begin{align}
\mbf E\|I_{3,1}^N(t)\|^2&\le K\int_0^t \mbf E\|V^N(s)\|^2\ud s+K\lambda_{N+1}^{1+\sigma}\int_0^t\mbf E\|Y^N(s)-X(s)\|_\alpha^4\ud s\nonumber\\
&\le  K\int_0^t \mbf E\|V^N(s)\|^2\ud s+K\lambda_{N+1}^{-(1+\sigma-2\alpha)}. \label{sec5eq4}
\end{align}
Applying the It\^o isometry, \eqref{semigroup1}, \eqref{GLip}, \eqref{Xtemporal}, and \eqref{Pn-Id} yields
\begin{align}
\mbf E\|I_{3,2}^N(t)\|^2&=\lambda_{N+1}^{1+\sigma}\mbf E\int_0^t\|E(t-s)\big(P_N-Id_H\big)\big(G(X(s))-G(X(t))\big)\|_{\mcal L_2^0}^2\ud s \notag\\
&\le K	\lambda_{N+1}^{1+\sigma}\int_0^t\|(-A)^{\frac{3+\sigma}{4}}E(t-s)\|^2_{\mcal L(H)}\|(-A)^{-\frac{3+\sigma}{4}}\big(P_N-Id_H\big)\|_{\mcal L(H)}^2\mbf E\|X(t)-X(s)\|^2\ud s\notag\\
&\le K	\lambda_{N+1}^{-\frac{1-\sigma}{2}}\int_0^t(t-s)^{-\frac{1+\sigma}{2}}\ud s\le  K	\lambda_{N+1}^{-\frac{1-\sigma}{2}}. \label{sec5eq5}
\end{align}
For $I_{3,3}^N$, we deduce from the It\^o isometry and \eqref{semigroup3} that
\begin{align}
\mbf E\|I_{3,3}^N(t)\|^2&=\lambda_{N+1}^{1+\sigma}\mbf E\int_0^t\|E(t-s)\big(P_N-Id_H\big)G(X(t))\|_{\mcal L_2^0}^2\ud s \nonumber\\
&=\lambda_{N+1}^{1+\sigma}\mbf E\sum_{i=1}^{\infty}\int_0^t\|(-A)^{\frac{1}{2}}E(t-s)(-A)^{-\frac{1}{2}}(P_N-Id_H)G(X(t))Q^{\frac{1}{2}}h_i\|^2\ud s\nonumber\\
&\le K\lambda_{N+1}^{1+\sigma}\mbf E\sum_{i=1}^{\infty}\|(-A)^{-\frac{1}{2}}(P_N-Id_H)G(X(t))Q^{\frac{1}{2}}h_i\|^2 \label{sec5eq1}\\
&= K\lambda_{N+1}^{1+\sigma}\mbf E\sum_{i=1}^{\infty}\sum_{j=1}^{\infty}\big\LL (-A)^{-\frac{1}{2}}(P_N-Id_H)G(X(t))Q^{\frac{1}{2}}h_i,e_j\big\RR^2.\nonumber
\end{align} 
Since $(-A)^{-\gamma}$, $\gamma\ge 0$, and $P_N$ are self-disjoint from $H$ to itself, we have
\begin{align*}
\mbf E\|I_{3,3}^N(t)\|^2&=K\lambda_{N+1}^{1+\sigma}\mbf E\sum_{i=1}^{\infty}\sum_{j=1}^{\infty}\big\LL G(X(t))Q^{\frac{1}{2}}h_i,(-A)^{-\frac{1}{2}}(P_N-Id_H)e_j\big\RR^2 \\
&=K\lambda_{N+1}^{1+\sigma}\mbf E\sum_{i=1}^{\infty}\sum_{j=N+1}^{\infty}\big\LL (-A)^{\frac{\sigma}{2}}G(X(t))Q^{\frac{1}{2}}h_i,(-A)^{-\frac{1+\sigma}{2}}e_j\big\RR^2 \\
&=K\lambda_{N+1}^{1+\sigma}\mbf E\sum_{i=1}^{\infty}\sum_{j=N+1}^{\infty}\lambda_j^{-(1+\sigma)}\big\LL (-A)^{\frac{\sigma}{2}}G(X(t))Q^{\frac{1}{2}}h_i,e_j\big\RR^2\\
&=K\lambda_{N+1}^{1+\sigma}\mbf E\sum_{i=1}^{\infty}\sum_{j=N+1}^{\infty}\lambda_j^{-(1+\sigma)}\big\LL Q^{\frac{1}{2}}h_i,\big((-A)^{\frac{\sigma}{2}}G(X(t))\big)^*e_j\big\RR_{U_0}^2.
\end{align*}
Here, $\big((-A)^{\frac{\sigma}{2}}G(X(t))\big)^*$ denotes the disjoint operator of $(-A)^{\frac{\sigma}{2}}G(X(t))$. 
Noting that $\{Q^{\frac{1}{2}}h_i\}_{i\in\mbb N^+}$ is a complete orthonormal basis of $U_0$, we arrive at
\begin{align*}
\mbf E\|I_{3,3}^N(t)\|^2&=K\lambda_{N+1}^{1+\sigma}\mbf E\sum_{j=N+1}^\infty\lambda_j^{-(1+\sigma)}\big\|\big((-A)^{\frac{\sigma}{2}}G(X(t))\big)^*e_j\big\|_{U_0}^2 \\
&\le K\mbf E\left[\mcal G^N\right],
\end{align*}
where $\mcal G^N:=\sum_{j=N+1}^\infty\big\|\big((-A)^{\frac{\sigma}{2}}G(X(t))\big)^*e_j\big\|_{U_0}^2$. 
By  the fact $\|\Gamma^*\|_{\mcal L_2(H,U_0)}=\|\Gamma\|_{\mcal L_2^0}$ for any $\Gamma\in\mcal L_2^0$, $\mcal G^N\le \big\|\big((-A)^{\frac{\sigma}{2}}G(X(t))\big)^*\big\|^2_{\mcal L_2(H,U_0)}=\|(-A)^{\frac{\sigma}{2}}G(X(t))\|_{\mcal L_2^0}^2$. Moreover, using \eqref{Ggrow} and \eqref{Xspatial} gives
\begin{align*}
\mbf E\|(-A)^{\frac{\sigma}{2}}G(X(t))\|_{\mcal L_2^0}^2 \le K(1+\mbf E\|X(t)\|^2_\sigma)\le K(T).
\end{align*}
Additionally, $\sum_{j=1}^\infty\big\|\big((-A)^{\frac{\sigma}{2}}G(X(t))\big)^*e_j\big\|_{U_0}^2<\infty$ due to $\big((-A)^{\frac{\sigma}{2}}G(X(t))\big)^*\in\mcal L_2(H,U_0)$ for a.s. $\omega\in\Omega$, which indicates 
that  $\lim\limits_{N\to\infty}\mcal G^N=0$ for a.s. $\omega\in\Omega$. In this way, we can apply the dominated convergence theorem to deduce that for any $t\in[0,T]$, 
\begin{align}\label{sec5eq2}
	\lim_{N\to\infty}\mbf E\|I_{3,3}^N(t)\|^2\le K \lim_{N\to\infty}\mbf E\left[\mcal G^N\right]=0. 
\end{align}
In addition, it is easy to show on basis of \eqref{sec5eq1} that
\begin{align}\label{sec5eq3}
	\sup_{t\in[0,T]}\sup_{N\ge1}\mbf E\|I_{3,3}^N(t)\|^2\le K(T).
\end{align}

Combining  $I_3^N=\sum_{i=1}^{3}I_{3,i}^N$, \eqref{I2N}, \eqref{sec5eq4},  and \eqref{sec5eq5}, we have 
\begin{align*}
	\mbf E\|V^N(t)\|^2&\le K\mbf E\|I_1^N(t)\|^2+K\mbf E\|I_2^N(t)\|^2+K\sum_{i=1}^{3}\mbf E\|I_{3,i}^N(t)\|^2\\
	&\le K_1\int_0^t\mbf E\|V^N(s)\|^2\ud s+K_1\big(\lambda_{N+1}^{-(1+\sigma-2\alpha)}+\lambda_{N+1}^{-\frac{1-\sigma}{2}}\big)+K_1\mbf E\sup_{t\in[0,T]}\|I_1^N(t)\|^2+K_1\mbf E\|I_{3,3}^N(t)\|^2
\end{align*}
for some $K_1>0$. 
Then the Gronwall inequality yields
\begin{align}
	\mbf E\|V^N(t)\|^2 &\le a^N(t)+K_1\int_0^ta^N(s)e^{K_1(t-s)}\ud s \notag\\
	&\le a^N(t)+K_1e^{K_1T}\int_0^t a^N(s)\ud s,~t\in[0,T],\label{sec5eq9}
\end{align}
where $a^N(t):=K_1\Big(\lambda_{N+1}^{-(1+\sigma-2\alpha)}+\lambda_{N+1}^{-\frac{1-\sigma}{2}}+\mbf E\sup\limits_{t\in[0,T]}\|I_1^N(t)\|^2+\mbf E\|I_{3,3}^N(t)\|^2\Big)$. 
It follows from \eqref{I1N}, \eqref{sec5eq6}, \eqref{sec5eq2}, and \eqref{sec5eq3} that
\begin{align}
&\;\lim_{N\to\infty}a^N(t)=0,\quad\forall~t\in[0,T], \label{sec5eq7}\\
&\; \sup_{s\in[0,T]}\sup_{N\ge1}a^N(s)\le K(T)(\lambda_1^{-(1+\sigma-2\alpha)}+\lambda_{1}^{-\frac{1-\sigma}{2}}+1).\label{sec5eq8}
\end{align}
Based on \eqref{sec5eq7} and \eqref{sec5eq8}, the dominated convergence theorem gives 
\begin{align*}
	\lim_{N\to\infty}\int_0^ta^N(s)\ud s=0,
\end{align*}
which, together with \eqref{sec5eq9}, leads to
\begin{align*}
	\lim_{N\to\infty}\mbf E\|V^N(t)\|^2=0,\quad\forall~t\in[0,T]
\end{align*}
and completes the proof.
\end{proof}

Theorem \ref{YNerrorasym} indicates that the established strong convergence speed $\lambda_{N+1}^{-\frac{1+\sigma}{2}}$ of $Y^N$   is smaller than the exact one. We further demonstrate this via a heuristic example. 

\begin{ex}\label{Example}
Let $H=\mbf L^2([0,1];\mbb R)$ and $A$ be the Laplacian with the homogeneous Dirichlet boundary condition such that the eigenvalues and eigenfunctions of $-A$ admit forms
\begin{align*}
	\lambda_n=n^2\pi^2,\quad e_n(\xi)=\sqrt{2}\sin(n\pi\xi),\quad \xi\in[0,1]
\end{align*}
for all $n\ge 1$.
We set $x\in \dot{H}^1$ and consider the error between  $x$ and its spectral Galerkin approximation $P^Nx$, which is the error  $X(t)-Y^N(t)$ at $t=0$ provided $X_0=x$.  Generally,  since we do not know the exact function form of $x$, the error $\|P^Nx-x\|$ would be estimated	as
\begin{align*} 
	\|P^Nx-x\|\le \|(-A)^{-\frac{1}{2}}\big(P_N-Id_H\big)\|_{\mcal L(H)}\|x\|_1=\lambda_{N+1}^{-\frac{1}{2}}\|x\|.
\end{align*}
Then one infers that the ``optimal convergence order" of $\|P^Nx-x\|$ is one with respect to the spatial dimension $N$ since $\lambda_{N+1}^{-\frac{1}{2}}=\mcal O(N^{-1})$. Here, the convergence order one is optimal in the sense that it coincides with the spatial regularity of $x$.
 However, one indeed can show
\begin{align}
	\|P_Nx-x\|^2=\sum_{i=N+1}^{\infty}\LL x,e_i\RR^2 =\sum_{i=N+1}^{\infty}\lambda_i^{-1}\LL (-A)^{\frac{1}{2}}x,e_i\RR^2 
	\le \lambda_{N+1}^{-1}\sum_{i=N+1}^{\infty}\LL (-A)^{\frac{1}{2}}x,e_i\RR^2. \label{sec5eq10}
\end{align}
Thus, 
\begin{align*}
	\lambda_{N+1}^{\frac{1}{2}}\|P_Nx-x\|\le \Big(\sum_{i=N+1}^{\infty}\LL (-A)^{\frac{1}{2}}x,e_i\RR^2\Big)^{\frac{1}{2}}\to 0,\quad N\to\infty
\end{align*}
due to the fact $x\in\dot H^1$. It implies that the convergence speed of $\|P_Nx-x\|$ is indeed larger than $\lambda_{N+1}^{-\frac{1}{2}}.$ 

If we take $x=\sum_{n=2}^{\infty}\frac{1}{n^{\frac{3}{2}}(\ln n)^\gamma}e_n$ with $\gamma>\frac{1}{2}$, then for any $r\ge 0$,
\begin{align*}
	\|x\|_{1+r}^2=\sum_{n=2}^{\infty}\lambda_{n}^{1+r}\frac{1}{n^3(\ln n)^{2\gamma}}=\pi^{2r+2}\sum_{n=2}^{\infty}\frac{n^{2r}}{n(\ln n)^{2\gamma}}.
\end{align*}
This series converges if and only if $r=0$, which means that $x\in \dot{H}^1$ and $x\notin \dot{H}^{1+r}$ for any $r>0$. Then, using \eqref{sec5eq10} yields
\begin{align}\label{sec5eq11}
	\|P_Nx-x\|&\le \lambda_{N+1}^{-\frac{1}{2}}\Big(\sum_{i=N+1}^{\infty}\frac{\lambda_i}{i^3(\ln i)^{2\gamma}}\Big)^{\frac{1}{2}}=\pi\lambda_{N+1}^{-\frac{1}{2}}\Big(\sum_{i=N+1}^{\infty}\frac{1}{i(\ln i)^{2\gamma}}\Big)^{\frac{1}{2}} \notag\\
	&\le \pi\lambda_{N+1}^{-\frac{1}{2}}\Big(\int_N^{\infty}\frac{1}{x(\ln x)^{2\gamma}}\ud x\Big)^{\frac{1}{2}}\le \frac{\pi}{\sqrt{2\gamma-1}}\frac{1}{(\ln N)^{\frac{2\gamma-1}{2}}}\lambda_{N+1}^{-\frac{1}{2}},\quad\forall~\gamma>\frac{1}{2}.
\end{align}
Although the infinitesimal factor $\frac{1}{(\ln N)^{\frac{2\gamma-1}{2}}}$ is negligible compared with $\lambda_{N+1}^{-\frac{1}{2}}$, the converge speed of $\|P_Nx-x\|$ is definitely faster than  $\lambda_{N+1}^{-\frac{1}{2}}$. In fact, it is easy to show
\begin{align*}
	\lim_{N\to\infty}\lambda_{N+1}^{\frac{1}{2}}\|P_Nx-x\|=0,~\lim_{N\to\infty}\lambda_{N+1}^{\frac{1}{2}+r}\|P_Nx-x\|=\infty,\quad\forall~r>0.
\end{align*}
In addition, \eqref{sec5eq11} indicates that the exact convergence speed of  $\|P_Nx-x\|$ is problem-dependent due to the arbitrariness of $\gamma$. In other words, the asymptotic error distribution of $P_Nx$ is also problem-dependent.
\end{ex}

As is shown in Theorem \ref{YNerrorasym} and Example \ref{Example}, it seems that one can not obtain a sharp convergence  speed for the spatial spectral Galerkin method applied to a general SPDE. It is also interesting to study whether there is a spatial semi-discrete numerical method admitting a nontrivial limit distribution.

\section{Concluding remarks}\label{Sec6}
In this study, we investigate the asymptotic error distribution of the exponential Euler method when applied to parabolic SPDEs with multiplicative noise. Notably, the limit equation, in terms of distribution, is influenced by an infinite number of additional independent $Q$-Wiener processes.
Building on this finding, we further explore the asymptotic error distribution of a fully discrete method that employs the exponential Euler method for temporal discretization and the finite element method for spatial discretization. To illustrate our results, we provide a concrete example involving a class of stochastic heat equations, demonstrating the pointwise convergence in distribution of the normalized error process associated with the exponential Euler method.
Ultimately, for spatial semi-discretizations, we investigate the asymptotic error distributions of the spectral Galerkin method in Section \ref{Sec5}, which suggests that the asymptotic error distribution of spatially semi-discrete numerical methods for SPDEs may vary on a case-by-case basis. It raises the intriguing question of whether there exists a spatially semi-discrete numerical method, such as the finite element method, that admits a nontrivial limit distribution. We leave this as an open problem for future research.

\bibliographystyle{plain}
\bibliography{mybibfile}

\appendix
\section{Proofs of Lemma \ref{Xmregularity}, Theorem \ref{strongorder}, and Lemma \ref{YNconverge}}

\subsection{Proof of Lemma \ref{Xmregularity}}
	It follows from  $\|E(t)\|_{\mcal L(H)}\le 1$, Burkholder--Davis--Gundy (BDG) inequality,  and the linear growth property of $F$ and $G$ that
	\begin{align*}
		&\;\|X^m(t)\|_{\mbf L^p(\Omega;H)} \\
		\le&\; \|X_0\|_{\mbf L^p(\Omega;H)}+K\int_0^t(1+\|X^m(\kappa_m(s))\|_{\mbf L^p(\Omega;H)})\ud s+K\Big\|\Big(\int_0^t\|G(X^m(\kappa_m(s)))\|_{\mcal L_2^0}^2\ud s\Big)^{1/2}\Big\|_{\mbf L^p(\Omega;\mbb R)} \\
		\le &\; \|X_0\|_{\mbf L^p(\Omega;H)}+K\int_0^t\big(1+\sup_{r\in[0,s]}\|X^m(r)\|_{\mbf L^p(\Omega;H)}\big)\ud s+K\Big[\int_0^t\big(1+\sup_{r\in[0,s]}\|X^m(r)\|^2_{\mbf L^p(\Omega;H)}\big)\ud r\Big]^{\frac{1}{2}}.
	\end{align*}
Thus, we have
\begin{align*}
	\sup_{r\in[0,t]}\|X^m(r)\|_{\mbf L^p(\Omega;H)}^2\le K(T)(1+\|X_0\|_{\mbf L^p(\Omega;H)}^2)+K(T)\int_0^t\sup_{r\in[0,s]}\|X^m(r)\|_{\mbf L^p(\Omega;H)}^2\ud s,
\end{align*}	
which implies
\begin{align}\label{Appendix1}
	\sup_{t\in[0,T]}\|X^m(t)\|_{\mbf L^p(\Omega;H)}\le K(T)(1+\|X_0\|_{\mbf L^p(\Omega;H)})\le K(T,\sigma)(1+\|X_0\|_{\mbf L^p(\Omega;\dot{H}^{1+\sigma})})
\end{align}
due to the Gronwall inequality.

Using the BDG inequality, \eqref{semigroup1},  and \eqref{Appendix1} yields that for any $\beta\in[0,1)$,
\begin{align}
&\;	\|X^m(t)\|_{\mbf L^p(\Omega;\dot{H}^\beta)} \nonumber\\
	\le &\;\|X_0\|_{\mbf L^p(\Omega;\dot{H}^\beta)}+\int_0^t\|(-A)^{\frac{\beta}{2}}E(t-\kappa_m(s))F(X^m(\kappa_m(s)))\|_{\mbf L^p(\Omega;H)}\ud s \nonumber \\
	&\;+K\Big\|\Big(\int_0^t\|(-A)^{\frac{\beta}{2}}E(t-\kappa_m(s))G(X^m(\kappa_m(s)))\|^2_{\mcal L_2^0}\ud s\Big)^{1/2}\Big\|_{\mbf L^p(\Omega;\mbb R)} \nonumber\\
	\le&\; K\|X_0\|_{\mbf L^p(\Omega;\dot{H}^{1+\sigma})}+K\int_0^t(t-\kappa_m(s))^{-\frac{\beta}{2}}(1+\|X^m(\kappa_m(s))\|_{\mbf L^p(\Omega;H)})\ud s \nonumber\\
	&\;+K\Big(\int_0^t(t-\kappa_m(s))^{-\beta}(1+\|X^m(\kappa_m(s))\|^2_{\mbf L^p(\Omega;H)})\ud s\Big)^{1/2} \nonumber\\
	\le &\; K(1+\|X_0\|_{\mbf L^p(\Omega;\dot{H}^{1+\sigma})})\Big(1+\int_0^t(t-s)^{-\frac{\beta}{2}}\ud s+\Big(\int_0^t(t-s)^{-\beta}\ud s\Big)^{1/2}\Big) \nonumber\\
	\le &\; K(1+\|X_0\|_{\mbf L^p(\Omega;\dot{H}^{1+\sigma})}).  \label{Appendix2}
\end{align} 

Note that for $t\ge s$, $X^m(t)-X^m(s)=(E(t-s)-Id_H)X^m(s)+\int_s^tE(t-\kappa_m(r))F(X^m(\kappa_m(r)))\ud r+\int_s^tE(t-\kappa_m(r))G(X^m(\kappa_m(r)))\ud W(r)$. It follows from \eqref{semigroup2}, $\|E(t)\|_{\mcal L(H)}\le 1$, the BDG inequality, and \eqref{Appendix2} that for any $\delta\in(0,\frac{1}{2})$,
\begin{align}
&\;\|X^m(t)-X^m(s)\|_{\mbf L^p(\Omega;H)}\notag\\
\le&\; K(t-s)^{\delta}\|X^m(s)\|_{\mbf L^p(\Omega;\dot{H}^{2\delta})}+K\int_s^t(1+\|X^m(\kappa_m(r))\|_{\mbf L^p(\Omega;H)})\ud r \notag\\
&\;+K\Big\|\Big(\int_s^t\|G(X^m(\kappa_m(r)))\|_{\mcal L_2^0}^2\ud r\Big)^{1/2}\Big\|_{\mbf L^p(\Omega;\mbb R)}\notag \\
\le &\; K(t-s)^\delta+K(t-s)+K\Big(\int_s^t(1+\|X^m(\kappa_m(r))\|_{\mbf L^p(\Omega;H)}^2)\ud r\Big)^{1/2} \notag\\
\le &\; K(t-s)^\delta.\label{Appendix3}
\end{align}

Applying the BDG inequality, \eqref{semigroup1}, \eqref{semigroup3},  \eqref{GLip}, \eqref{Ggrow}, \eqref{Appendix2}, and \eqref{Appendix3} with $\delta=\frac{1+\sigma}{4}$, we obtain
\begin{align}
	&\;\|X^m(t)\|_{\mbf L^p(\Omega;\dot{H}^{1+\sigma})}\nonumber\\
	\le&\; \|X_0\|_{\mbf L^p(\Omega;\dot{H}^{1+\sigma})}+K\int_0^t(t-\kappa_m(s))^{-\frac{1+\sigma}{2}}(1+\|X^m(\kappa_m(s))\|_{\mbf L^p(\Omega;H)})\ud s \notag\\
	&\;+K\Big(\int_0^t\|(-A)^{\frac{1+\sigma}{2}}E(t-\kappa_m(s))\big(G(X^m(\kappa_m(s)))-G(X^m(t))\big)\|_{\mbf L^p(\Omega;\mcal L_2^0)}^2\ud s\Big)^{1/2} \notag \\
	&\;+K\Big\|\Big(\int_0^t\|(-A)^{\frac{1+\sigma}{2}}E(t-s)G(X^m(t))\|_{\mcal L_2^0}^2\|E(s-\kappa_m(s))\|_{\mcal L(H)}^2\ud s\Big)^{1/2} \Big\|_{\mbf L^p(\Omega;\mbb R)}\notag \\
	\le&\; K(1+\|X_0\|_{\mbf L^p(\Omega;\dot{H}^{1+\sigma})})+K\int_0^t(t-\kappa_m(s))^{-\frac{1+\sigma}{2}}\ud s \notag \\
	&\;+K\Big\|\Big(\sum_{i=1}^\infty\int_0^t\|(-A)^{\frac{1}{2}}E(t-s)(-A)^{\frac{\sigma}{2}}G(X^m(t))Q^{\frac{1}{2}}h_i\|^2\ud s\Big)^{1/2}\Big\|_{\mbf L^p(\Omega;\mbb R)} \notag\\
	\le & K(1+ \|X_0\|_{\mbf L^p(\Omega;\dot{H}^{1+\sigma})})+K\big\|\|(-A)^{\frac{\sigma}{2}}G(X^m(t))\|_{\mcal L_2^0}\big\|_{\mbf L^p(\Omega;\mbb R)} \notag\\
	\le &\;K(1+ \|X_0\|_{\mbf L^p(\Omega;\dot{H}^{1+\sigma})}). \label{Appendix4}
\end{align} 
This proves Lemma \ref{Xmregularity}(i).

Next, we prove the second conclusion. 
For the case $\gamma\in[0,\sigma]$, applying the BDG inequality, \eqref{semigroup1}-\eqref{semigroup2}, \eqref{Ggrow}, $\|(-A)^{-\rho}\|_{\mcal L(H)}\le K(\rho)$ for $\rho\ge 0$, and \eqref{Appendix4}, one has that for any $0\le s<t\le T$,
\begin{align}
	&\;\|X^m(t)-X^m(s)\|_{\mbf L^p(\Omega;\dot{H}^\gamma)}\notag\\
	\le&\; K(t-s)^{\frac{1+\sigma-\gamma}{2}}\|X^m(s)\|_{\mbf L^p(\Omega;\dot{H}^{1+\sigma})}+K\int_s^t(t-\kappa_m(r))^{-\frac{\gamma}{2}}\ud r \notag\\
	&\;+ K\Big(\int_s^t\|(-A)^{-\frac{\sigma-\gamma}{2}}E(t-\kappa_m(r))\|^2_{\mcal L(H)}\|(-A)^{\frac{\sigma}{2}}G(X^m(\kappa_m(r)))\|_{\mbf L^p(\Omega;\mcal L_2^0)}^2\ud s\Big)^{1/2} \notag\\
	\le &\; K(t-s)^{1/2}.\label{Appendix5}
\end{align}

For $\gamma\in(\sigma,1+\sigma)$, it follows from \eqref{Appendix4}-\eqref{Appendix5},  Proposition \ref{interpolation}, and the H\"older inequality that
 \begin{align*}
 	\|X^m(t)-X^m(s)\|_{\mbf L^p(\Omega;\dot{H}^\gamma)}&\le \Big[\mbf E\big(\|X^m(t)-X^m(s)\|_\sigma^{p(1+\sigma-\gamma)}\|X^m(t)-X^m(s)\|_{1+\sigma}^{p(\gamma-\sigma)}\big)\Big]^{1/p} \\
 	&\le \big\|X^m(t)-X^m(s)\big\|_{\mbf L^p(\Omega;\dot{H}^\sigma)} ^{1+\sigma-\gamma}	\big\|X^m(t)-X^m(s)\big\|_{\mbf L^p(\Omega;\dot{H}^{1+\sigma})} ^{\gamma-\sigma}	\\
 	&\le K(\gamma,T)|t-s|^{\frac{1+\sigma-\gamma}{2}}.	
 \end{align*} 
The above formula and \eqref{Appendix5} finish the proof.
%\end{proof}

\subsection{Proof of Theorem \ref{strongorder}}
%\begin{proof}
Fix $\beta\in[0,\sigma]$.	By \eqref{mildsoution} and \eqref{Xmmild}, we have $X^m(t)-X(t)=\sum_{i=1}^6S_i^m(t)$, $t\in[0,T]$, where
	\begin{align*}
	S_1^m(t)&:=\int_0^tE(t-\kappa_m(s))\big(F(X^m(\kappa_m(s)))-F(X^m(s))\big)\ud s,\\
	S_2^m(t)&:=\int_0^tE(t-s)\big(E(s-\kappa_m(s))-Id_H)\big)F(X^m(s))\ud s,\\
	S_3^m(t)&:=\int_0^tE(t-s)\big(F(X^m(s))-F(X(s))\big)\ud s,\\
	S_4^m(t)&:=\int_0^tE(t-\kappa_m(s))\big(G(X^m(\kappa_m(s)))-G(X^m(s))\big)\ud W(s),\\
	S_5^m(t)&:=\int_0^tE(t-s)\big(E(s-\kappa_m(s))-Id_H\big)G(X^m(s))\ud W(s), \\
	S_6^m(t)&:=\int_0^t E(t-s)\big(G(X^m(s))-G(X(s))\big)\ud W(s). 
	\end{align*}
	
It follows from \eqref{semigroup1}, \eqref{FLip}, Lemma \ref{Xmregularity}(ii), and $\beta\le\sigma<1$ that
\begin{align*}
	\|S_1^m(t)\|_{\mbf L^p(\Omega;\dot{H}^\beta)}&\le K(\beta)\int_0^t (t-\kappa_m(s))^{-\frac{\beta}{2}}\|X^m(\kappa_m(s))-X^m(s)\|_{\mbf L^p(\Omega;H)}\ud s \\
	&\le K(\beta,T)m^{-\frac{1}{2}}\int_0^t(t-s)^{-\frac{\beta}{2}}\ud s\le Km^{-\frac{1}{2}}.
\end{align*}	
By \eqref{semigroup1}-\eqref{semigroup2}, the linear growth property of $F$, and Lemma \ref{Xmregularity}(i), 
\begin{align*}
		&\;\|S_2^m(t)\|_{\mbf L^p(\Omega;\dot{H}^\beta)}\\
		\le&\; K\int_0^t\|(-A)^{\frac{\beta+1}{2}}E(t-s)\|_{\mcal L(H)}\|(-A)^{-\frac{1}{2}}\big(E(s-\kappa_m(s))-Id_H\big)\|_{\mcal L(H)}(1+\|X^m(s)\|_{\mbf L^p(\Omega;H)})\ud s \\
		\le &\;K(\beta,T)m^{-\frac{1}{2}}\int_0^t(t-s)^{-\frac{\beta+1}{2}}\ud s\le K(\beta,T)m^{-\frac{1}{2}}.	
\end{align*}
Further, using \eqref{semigroup1} and \eqref{FLip}, we arrive at
\begin{align*}
\|S_3^m(t)\|_{\mbf L^p(\Omega;\dot{H}^\beta)}&\le K\int_0^t(t-s)^{-\frac{\beta}{2}}\|X^m(s)-X(s)\|_{\mbf L^p(\Omega;H)}\ud s\\
&\le K(\beta)\int_0^t(t-s)^{-\frac{\beta}{2}}\|X^m(s)-X(s)\|_{\mbf L^p(\Omega;\dot{H}^\beta)}\ud s.
\end{align*}

Applying the BDG inequality, \eqref{semigroup1}, \eqref{GLip}, and Lemma \ref{Xmregularity}(ii) yields
\begin{align*}
	\|S_4^m(t)\|_{\mbf L^p(\Omega;\dot{H}^\beta)}&\le K\Big\|\Big(\int_0^t\|(-A)^{\frac{\beta}{2}}E(t-\kappa_m(s))\big(G(X^m(\kappa_m(s)))-G(X^m(s))\big)\|_{\mcal L_2^0}^2\ud s\Big)^{1/2}\Big\|_{\mbf L^p(\Omega;\mbb R)} \\
	&\le K(\beta)\Big(\int_0^t(t-s)^{-\beta}\|X^m(\kappa_m(s))-X^m(s)\|^2_{\mbf L^p(\Omega;H)}\ud s\Big)^{1/2}\le K(\beta,T)m^{-\frac{1}{2}}. 
\end{align*}
Similarly, by the BDG inequality, \eqref{semigroup1} and \eqref{GLip}, we obtain
\begin{align*}
		\|S_6^m(t)\|_{\mbf L^p(\Omega;\dot{H}^\beta)}\le K(\beta)\Big(\int_0^t(t-s)^{-\beta}\|X^m(s)-X(s)\|^2_{\mbf L^p(\Omega;\dot{H}^\beta)}\ud s\Big)^{1/2}. 
\end{align*}
Next, let us estimate the $p$th moment of $S_5^m(t)$. It follows from the BDG inequality, \eqref{semigroup1}-\eqref{semigroup2}, and \eqref{GLip} that
\begin{align*}
	&\;\|S_5^m(t)\|_{\mbf L^p(\Omega;\dot{H}^\beta)}\\
	\le&\;K\Big\|\Big(\int_0^t\|(-A)^{\frac{\beta}{2}}E(t-s)\big(E(s-\kappa_m(s))-Id_H\big)G(X^m(s))\|_{\mcal L_2^0}^2\ud s\Big)^{1/2}\Big\|_{\mbf L^p(\Omega;\mbb R)}\\
	\le &\;K \Big\|\Big(\int_0^t\|(-A)^{\frac{\beta+1}{2}}E(t-s)\|^2_{\mcal L(H)}\|(-A)^{-\frac{1}{2}}\big(E(s-\kappa_m(s))-Id_H\big)\|^2_{\mcal L(H)}\\
	&\qquad\qquad\cdot\|G(X^m(s))-G(X^m(t))\|_{\mcal L_2^0}^2\ud s\Big)^{1/2}\Big\|_{\mbf L^p(\Omega;\mbb R)} \\
	&\;+K\Big\|\Big(\int_0^t\|(-A)^{\frac{\beta+1-\sigma}{2}}E(t-s)(-A)^{\frac{\sigma}{2}}G(X^m(t))\|^2_{\mcal L_2^0}\|(-A)^{-\frac{1}{2}}\big(E(s-\kappa_m(s))-Id_H\big)\|^2_{\mcal L(H)}\ud s\Big)^{1/2}\Big\|_{\mbf L^p(\Omega;\mbb R)} \\
	\le &\; K(\beta)m^{-\frac{1}{2}}\Big\|\Big(\int_0^t(t-s)^{-(\beta+1)}\|X^m(t)-X^m(s)\|^2\ud s\Big)^{1/2}\Big\|_{\mbf L^p(\Omega;\mbb R)} \\
	&\;+Km^{-\frac{1}{2}}\Big\|\Big(\sum_{i=1}^\infty\int_0^t\|(-A)^{\frac{\beta+1-\sigma}{2}}E(t-s)(-A)^{\frac{\sigma}{2}}G(X^m(t))Q^{\frac{1}{2}}h_i\|^2\ud s\Big)^{1/2}\Big\|_{\mbf L^p(\Omega;\mbb R)}.
\end{align*}
Further, using \eqref{semigroup3}, \eqref{Ggrow}, and Lemma \ref{Xmregularity}, we have
\begin{align*}
	\|S_5^m(t)\|_{\mbf L^p(\Omega;\dot{H}^\beta)}\le&\; K(\beta)m^{-\frac{1}{2}}\Big(\int_0^t(t-s)^{-(\beta+1)}\|X^m(t)-X^m(s)\|^2_{\mbf L^p(\Omega;H)}\ud s\Big)^{1/2} \\
	&\;+K(\beta)m^{-\frac{1}{2}}\|(-A)^{\frac{\sigma}{2}}G(X^m(t))\|_{\mbf L^p(\Omega;\mcal L_2^0)} \\
	\le &\; K(\beta,T)m^{-\frac{1}{2}}\Big[\big(\int_0^t(t-s)^{-\beta}\ud s\big)^{1/2}+1+\|X^m(t)\|_{\mbf L^p(\Omega;\dot{H}^\sigma)}\Big] \\
	\le &\; K(\beta,T)m^{-\frac{1}{2}}.
\end{align*}

Combining the previous estimates for $S_i^m(t)$, $i=1,\ldots,6$, and using the H\"older inequality, one has
\begin{align*}
	&\;\|X^m(t)-X(t)\|^2_{\mbf L^p(\Omega;\dot{H}^\beta)}\\
	\le&\; K(\beta,T)m^{-1}+K(\beta,T)\int_0^t(t-s)^{-\beta}\|X^m(s)-X(s)\|^2_{\mbf L^p(\Omega;\dot{H}^\beta)}\ud s,
\end{align*}
which gives $\sup\limits_{t\in[0,T]}\|X^m(t)-X(t)\|_{\mbf L^p(\Omega;\dot{H}^\beta)}\le K(\beta,T)m^{-\frac{1}{2}}$ due to the Gronwall inequality with singular kernel. This completes the proof.
%\end{proof}

\subsection{Proof of Lemma \ref{YNconverge}}
By \eqref{mildsoution} and \eqref{YN} , 
\begin{align*}
	&\;\|Y^N(t)-X(t)\|_{\mbf L^p(\Omega;H)}\\
	\le&\; \|E(t)(P_N-Id_H)X_0\|_{\mbf L^p(\Omega;H)} \\
	&\;+\Big\|\int_0^t\big(E_N(t-s)P_NF(Y^N(s))-E(t-s)F(X(s))\big)\ud s\Big\|_{\mbf L^p(\Omega;H)}\\
	&\;+\Big\|\int_0^t\big(E_N(t-s)P_NG(Y^N(s))-E(t-s)G(X(s))\big)\ud W(s)\Big\|_{\mbf L^p(\Omega;H)}\\
	=:&\; D_1+D_2+D_3.
\end{align*}
By \eqref{Pn-Id} and $\|E(t)\|_{\mcal L(H)}\le 1$, $t\ge 0$,
\begin{align*}
	D_1\le \lambda_{N+1}^{-\frac{1+\sigma}{2}}\|X_0\|_{\mbf L^p(\Omega;\dot{H}^{1+\sigma})}\le K\lambda_{N+1}^{-\frac{1+\sigma}{2}}.
\end{align*}
It follows from  \eqref{semigroup1}, \eqref{FLip}, \eqref{Pn-Id}, the linear growth property of $F$, and \eqref{Xspatial} that
\begin{align*}
D_2\le&\; \Big\|\int_0^t E(t-s)P_N\big(F(Y^N(s))-F(X(s))\big)\ud s\Big\|_{\mbf L^p(\Omega;H)} \\
&\; +\Big\|\int_0^t E(t-s)\big(P_N-Id_H\big)F(X(s))\ud s\Big\|_{\mbf L^p(\Omega;H)} \\
\le &\; K\int_0^t\|Y^N(s)-X(s)\|_{\mbf L^p(\Omega;H)}\ud s \\
&\;+K\int_0^t\|(-A)^{\frac{1+\sigma}{2}}E(t-s)\|_{\mcal L(H)}\|(-A)^{-\frac{1+\sigma}{2}}\big(P_N-Id_H\big)\|_{\mcal L(H)}(1+\|X(s)\|_{\mbf L^p(\Omega;H)})\ud s \\
\le &\;K\int_0^t\|Y^N(s)-X(s)\|_{\mbf L^p(\Omega;H)}\ud s+K\lambda_{N+1}^{-\frac{1+\sigma}{2}}.
\end{align*}
By the BDG inequality and Minkowski inequality,
\begin{align*}
	D_3\le &\; \Big\|\Big(\int_0^t \big\|E(t-s)P_N\big(G(Y^N(s))-G(X(s))\big)\big\|_{\mcal L_2^0}^2\ud s\Big)^{\frac{1}{2}}\Big\|_{\mbf L^p(\Omega;\mbb R)} \\
	&\;+\Big\|\Big(\int_0^t \big\|E(t-s)(P_N-Id_H)\big(G(X(s))-G(X(t))\big)\big\|_{\mcal L_2^0}^2\ud s\Big)^{\frac{1}{2}}\Big\|_{\mbf L^p(\Omega;\mbb R)}\\
	&\;+\Big\|\Big(\int_0^t \big\|E(t-s)(P_N-Id_H)G(X(t))\big\|_{\mcal L_2^0}^2\ud s\Big)^{\frac{1}{2}}\Big\|_{\mbf L^p(\Omega;\mbb R)} \\
	=: D_{3,1}+D_{3,2}+D_{3,3}.
\end{align*}
Using \eqref{GLip} and the Minkowski inequality yields
\begin{align*}
D_{3,1}\le K\Big(\int_0^t\|Y^N(s)-X(s)\|_{\mbf L^p(\Omega;H)}^2\ud s\Big)^{\frac{1}{2}}.
\end{align*}
Applying \eqref{semigroup1}, \eqref{Pn-Id}, \eqref{GLip}, and \eqref{Xtemporal}, we arrive at
\begin{align*}
	D_{3,2}\le&\; K\|(-A)^{-\frac{1+\sigma}{2}}(P_N-Id_H)\|_{\mcal L(H)}\Big\|\Big(\int_0^t (t-s)^{-1-\sigma}\|X(t)-X(s)\|^2\ud s\Big)^{\frac{1}{2}}\Big\|_{\mbf L^p(\Omega;\mbb R)} \\
	\le &\; K\lambda_{N+1}^{-\frac{1+\sigma}{2}}\Big(\int_0^t(t-s)^{-1-\sigma}\|X(t)-X(s)\|_{\mbf L^p(\Omega;H)}^2\ud s\Big)^{\frac{1}{2}}\le K\lambda_{N+1}^{-\frac{1+\sigma}{2}}.
\end{align*} 
We infer from \eqref{semigroup3} and \eqref{Pn-Id} that
\begin{align*}
	&\;\int_0^t\|E(t-s)\big(P_N-Id_H\big)G(X(t))\|_{\mcal L_2^0}^2\ud s\\
	=&\;\sum_{i=1}^\infty \int_0^t\|(-A)^{\frac{1}{2}}E(t-s)(-A)^{-\frac{1+\sigma}{2}}\big(P_N-Id_H\big)(-A)^{\frac{\sigma}{2}}G(X(t))Q^{\frac{1}{2}}h_i\|^2\ud s \\
	\le&\; K  \sum_{i=1}^\infty \|(-A)^{-\frac{1+\sigma}{2}}\big(P_N-Id_H\big)\|_{\mcal L(H)}^2\|(-A)^{\frac{\sigma}{2}}G(X(t))Q^{\frac{1}{2}}h_i\|^2  \\
	\le&\; K\lambda_{N+1}^{-(1+\sigma)}\|(-A)^{\frac{\sigma}{2}}G(X(t))\|_{\mcal L_2^0}^2
	\le  K\lambda_{N+1}^{-(1+\sigma)}(1+\|X(t)\|_\sigma^2),
\end{align*}
which together with \eqref{Xspatial} immediately yields $D_{3,3}\le K\lambda_{N+1}^{-\frac{1+\sigma}{2}}$. In this way, we have
\begin{align*}
	D_3\le K\lambda_{N+1}^{-\frac{1+\sigma}{2}}+K\Big(\int_0^t\|Y^N(s)-X(s)\|_{\mbf L^p(\Omega;H)}^2\ud s\Big)^{\frac{1}{2}}.
\end{align*}
Combining the previous estimates for $D_i$, $i=1,2,3$, we obtain
\begin{align*}
	\|Y^N(t)-X(t)\|_{\mbf L^p(\Omega;H)}^2\le K\lambda_{N+1}^{-(1+\sigma)}+K\int_0^t\|Y^N(s)-X(s)\|_{\mbf L^p(\Omega;H)}^2\ud s.
\end{align*} 
Finally, the proof is complete based on the above formula and the Gronwall inequality.

%\begin{thebibliography}{10}
%\bibitem{HuBIT}
%H.~Zhou, Y.~Hu, and Y.~Liu.
%\newblock Backward {E}uler method for stochastic differential equations with
%  non-{L}ipschitz coefficients driven by fractional {B}rownian motion.
%\newblock {\em BIT}, 63(3):Paper No. 40, 2023.
%\end{thebibliography}

\end{document}